\theoremstyle{plain}
\newtheorem{lemma}{Lemma}[section]
\newtheorem{prop}[lemma]{Proposition}
\newtheorem{cor}[lemma]{Corollary}
\newtheorem{thm}[lemma]{Theorem}
\newtheorem{de}[lemma]{Definition}
\newtheorem{ex}[lemma]{Example}
\newtheorem{rem}[lemma]{Remark}
\newcommand{\Z}{\mathbb Z}
\newcommand{\N}{\mathbb N}
\newcommand{\diagramnode}[1]{\makebox[1.5em]{#1}}
\begin{document}

\title[]  
{A geometric interpretation of the triangulated structure of $m$-cluster categories} 

\author{Lamberti Lisa}


\keywords{m-cluster categories, triangulated
structure, Auslander-Reiten triangles, orbit categories, Auslander Reiten quivers, m-th power of a translation quiver} %

\begin{abstract}
The aim of this note is to answer several open problems arising
from the geometric description of the $m$-cluster categories of type $A_n$
and their realization in terms of the $m$-th power
of a translation quiver. In particular, we give a geometric interpretation of the triangulated
structure of $m$-cluster categories. Furthermore, we characterize
all the connected components arising from a cluster category when taking the 
$m$-th power of its Auslander-Reiten quiver.
\end{abstract}

\maketitle

\section{Introduction}
Cluster categories were introduced in \cite{ccs} and \cite{BMRRT} with the aim of modeling the features of cluster algebras,
discovered by S. Fomin and A. Zelevinsky in \cite{FZ}. 
In \cite{BMRRT} they were defined as orbit categories of the bounded derived category of finitely generated
modules over a finite dimensional hereditary algebra $H$ over a field $k$ by a certain auto-equivalence.
A number of cluster categories
are approachable in a more concrete way. In fact, in
\cite{ccs} the cluster categories associated
to the path algebras $H:=kQ$ over a Dynkin quiver of type $A_n$ were expressed in geometric terms by means of 
triangulations of a certain polygon $\Pi$. 
The objects of the cluster category of \cite{ccs} are direct sums of diagonals and the morphisms are given 
by certain rotations around vertices of $\Pi$
modulo the so-called mesh relations which allow the exchange of certain types of rotations.
Similarly, R. Schiffler provided in \cite{Sch} a geometric description of cluster categories of type $D_n$ 
by means of polygons which have a single puncture in their centers. More geometric models have been given in
\cite{bm3}, \cite{bz} and \cite{to}.

Cluster categories have a number of interesting properties: they are triangulated,
the projection functor from $\mathcal{D}^b(\mathrm{mod}H)$ to the cluster category is a triangle-functor and they are
Calabi-Yau of dimension $2$. This was all proven by B. Keller in \cite{K1}. Furthermore, these categories
are Krull-Schmidt and have Auslander-Reiten triangles \cite{BMRRT}.
In addition, the geometric model is beautifully linked to cluster algebras:
in type $A_n$ we have the bijection between the 
isomorphism classes of indecomposable objects of $\mathcal{C}:=\mathcal{C}_{A_n}$ and the diagonals of $\Pi$
as well as between the diagonals of $\Pi$ and cluster variables of a cluster algebra of type $A_n$. 
Under the latter bijection maximal sets of non-crossing diagonals correspond to clusters, and mutations of
cluster variables can be interpreted 
as flips of diagonals.

As a generalization of cluster categories, B. Keller introduced
in \cite{K1} the so called $m$-cluster categories $\mathcal{C}^m,$ for $m\in\N$. 
These categories, studied for example by 
\cite{bm1}, \cite{bm2}, \cite{Jo}, \cite{th}, \cite{to}, \cite{wr}, \cite{zh}, \cite{zz}, 
have the same properties as the original ones, except that this time
they are Calabi-Yau of dimension $m+1$ \cite{K1}. Also for certain $m$-cluster categories a geometric
description was provided. In \cite{bm1}, K. Baur and R. Marsh 
expressed the $m$-cluster categories of type 
$A_n$ by means of the so-called $m$-diagonals which are the diagonals dividing a given $(mn+2)$-gon $\Pi$
into two smaller polygons whose number of sides are congruent to $2$ modulo $m$. This construction recovers
the original one given in \cite{ccs} for $m=1$. In \cite{bm2} the authors gave a geometric description
of $m$-cluster categories of type $D_n$
generalizing Schiffler's work \cite{Sch} for cluster category for type $D_n$.

In \cite{bm1}, K. Baur and R. Marsh establish a combinatorial
method enabling one to recover important information about $\mathcal{C}^m$
starting from $\mathcal{C}.$ This information is
recorded in the Auslander-Reiten quivers of the corresponding categories. In fact they showed
that one can obtain the Auslander-Reiten quiver of an $m$-cluster category of type $A_{n-1},$ starting from 
the one of a cluster category of type $A_{nm-1}$. For this, \cite{bm1} introduced the concept of
the $m$-th power of a stable translation quiver, $\Gamma$,
obtained from the set of diagonals of the $nm+2$-gon $\Pi$, with arrows arising from paths in  $\Gamma$.
In general, the $m$-th power of $\Gamma$ 
consists of several connected components. One of them  is
isomorphic to the Auslander-Reiten quiver of an $m$-cluster category of type $A_{n-1}$, as K. Baur and R. Marsh showed.

Concerning the content of this note, our interest is twofold. On one hand, 
we are interested in describing the
triangulated structure of the $m$-cluster categories of type $A_{n-1}$
on the level of the geometric model, and understanding it through the concept of the 
$m$-th power of $\Gamma$. On the other hand, we will give a complete characterization of all
the connected components arising with this procedure.

More precisely, the plan is as follows. We start by recalling the necessary 
definitions and preliminary results, then we present the $m$-cluster categories of type $A_{n-1}$ 
and show how one can obtain their Auslander-Reiten quivers in geometric terms. We will also
recall the concept of the $m$-th power of a translation quiver (Section 2).
In the third section we explain how the triangulated structure of the $m$-cluster category
can be recovered from the polygon $\Pi$. For this we will first 
study Auslander-Reiten triangles of $\mathcal{C}^m$ which turn out to have
a beautiful geometric description. Secondly, we will explain the
link between Auslander-Reiten triangles of $\mathcal{C}^m$ and $\mathcal{C}$.
Here, we view the former category as obtained from the latter
by taking the $m$-th power of its 
Auslander Reiten quivers. Finally,
we study arbitrary triangles coming from a morphism $\mu$
between two objects in $\mathcal{C}^m$ and provide a geometric description of 
the third object of the triangle of $\mu$, i.e. of $\mathrm{Cone}(\mu)\in\mathcal{C}^m.$ 
Combining these results will
provide a geometric understanding of the triangulated structure of all orbit
categories arising from $\mathcal{C}$ through the $m$-th power of the Auslander-Reiten quiver of $\mathcal{C}$.
A precise description of the different connected components arising 
will be given in Section 4.
In the last section we illustrate on two examples the results of Section 4. 
\newline

\textbf{Acknowledgments:} I would like to thank Prof. K. Baur for introducing me to such an interesting field, 
for the inspiring discussions we had and
for her time in reading my drafts.

\section{Definitions and preliminaries}

In this section we introduce some terminology following mainly \cite{bm1},  \cite{BMRRT}, \cite{ccs} and \cite{K1}.\newline

Throughout this paper let $n,m\in\N$ and fix an $N:=nm+2$-sided regular polygon $\Pi$ with vertices numbered clockwise. 
Consider all operations on the vertices of
$\Pi$ modulo $nm+2$. Unless stated otherwise $Q$ will always be a Dynkin quiver of type $A_{n-1}$ and $\mathcal{D}^b(\mathrm{mod }kQ)$
the bounded derived category of finite-dimensional modules over the path algebra $kQ,$
where $k$ is an algebraically closed field.\newline

The $m$-cluster category of type $A_{n-1}$ is defined as the orbit category of the bounded derived category under 
the action of the cyclic group generated by the auto-equivalence $\tau^{-1}\Sigma^m$:
$$
\mathcal{C}^m:=\mathcal{D}^b(\mathrm{mod }kQ)/(\tau^{-1}\Sigma^m)^{\Z},
$$
where $\tau$ denotes the Auslander-Reiten
translation and $\Sigma^m$ is the composition of the shift functor $\Sigma$ with itself $m$ times.
The isoclasses of objects of $\mathcal{C}^m$ are the $\tau^{-1}\Sigma^m$-orbits $\widetilde{X}:=((\tau^{-1}\Sigma^m)^i X)_{i\in \Z}$
of objects $ X \in \mathcal{D}^b(\mathrm{mod }kQ)$. For two objects $\widetilde{X},\widetilde{Y}$ in $\mathcal{C}^m$
we have:
$$
\mathrm{Hom}_{\mathcal{C}^m}(\widetilde{X},\widetilde{Y})=\bigoplus_{i\in\Z}\mathrm{Hom}_{\mathcal{D}^b(\mathrm{mod }kQ) }((\tau^{-1}\Sigma^m)^iX,Y).
$$
For simplicity later on we will omit the tilde and indicate a representative of the preimage of an object $X\in\mathcal{C}^m$  
under the projection functor $\pi:\mathcal{D}^b(\mathrm{mod }kQ)\rightarrow\mathcal{C}^m$ by $\pi^{-1}(X)$.
This category is $k$-linear triangulated (\cite{K1}), Krull-Schmidt and has Auslander Reiten triangles 
(\cite{BMRRT}). We recall that being Krull-Schmidt means that 
each object decomposes uniquely up to isomorphism and permutation of factors into a finite sum of indecomposable objects. The latter are
objects which are neither zero
nor a direct sum of two non-zero objects. 
Furthermore, remark that the shift functor $\Sigma$ and the Auslander-Reiten translate $\tau_m$ in $\mathcal{C}^m$
are induced by the ones in $\mathcal{D}^b(\mathrm{mod }kQ)$.
For $m=1$ we omit the indices in $\mathcal{C}^m$ and $\tau_m$ and simply write  $\mathcal{C}$, $\tau$. 

Recall that the \textit{Auslander-Reiten quiver} of a Krull-Schmidt category 
$\mathcal{K}$ is a quiver
whose vertices are the isomorphism classes of indecomposable
objects of $\mathcal{K}$ and where the number of arrows between two vertices $[X]$ and $[Y]$ is given by the
dimension of the space of \textit{irreducible morphisms} between the indecomposables $X$ and $Y$:
$$\mathrm{Irr}_{\mathcal{K}}(X,Y):= \mathrm{rad}_{\mathcal{K}}(X,Y)/\mathrm{rad}^2_{\mathcal{K}}(X,Y).$$
Here $\mathrm{rad}_{\mathcal{K}}(X,Y)$ is the subspace of $\mathrm{Hom}_{\mathcal{K}}(X,Y)$ 
formed by all non isomorphisms, and 
$\mathrm{rad}^2_{\mathcal{K}}(X,Y)$ is 
the subspace of all non isomorphisms admitting a non trivial factorization.

To $Q$ we can associate also another quiver which we denote by $\Z Q.$ Its vertices are labeled by pairs $(n,i)$ where
$n\in\Z$ and $i$ is  a vertex of $Q$, and if there is an arrow in $Q$ from $i$ to $j$ then we draw an arrow from 
$(n,i)$ to $(n,j)$ and one from $(n,j)$ to $(n+1,i)$. In this way $\Z Q$ is an infinite strip of copies of $Q$.
Furthermore, one can define a translation map $\tau$ on $\Z Q$ by sending $(n,j)$ to $(n-1,j)$, this makes $\Z Q$ a 
stable translation quiver as defined by  C. Riedtmann, \cite{rie}.

In particular, D. Happel showed in \cite{Ha2} that the Auslander-Reiten quiver of $\mathcal{D}^b(\mathrm{mod }kQ)$
is isomorphic to $\Z Q$. From this result it follows that $\mathcal{D}^b(\mathrm{mod }kQ)$ is independent of
the orientation of $Q$, see Section 4.8 in \cite{Ha2}. Furthermore, the Auslander-Reiten quiver of $\mathcal{C}^m$ is isomorphic 
to the quotient $\Z Q/\varphi^m$, see Proposition 1.3 in \cite{BMRRT}
where $\varphi^m$ is the graph automorphism induced by the auto-equivalence $\tau^{-1}\Sigma^m.$
For simplicity in the following we sometimes write $\tau^{-1}\Sigma^m$ instead of $\varphi^m.$

After having reminded the algebraic definition of $m$-cluster categories we will
recall the geometric approach following \cite{ccs} and \cite{bm1}. In this case the Auslander-Reiten
quiver of $\mathcal{C}^m$ admits another description, as follows from
the next theorem due to P. Caldero, F. Chapoton and R. Schiffler (Section 2 in \cite{ccs}) for the case $m=1$ 
and to K. Baur and R. Marsh (Proposition 5.5 in \cite{bm1}) for $m\geq1$. 

Let $\Pi$ be as before.
As usual we refer to a \textit{diagonal} of $\Pi$ as a straight line between two non-adjacent vertices of $\Pi$.
Whereas an \textit{$m$-diagonal} in $\Pi$ is a diagonal which divides $\Pi$ into an $(mj+2)$-gon
 and an $(m(n-j)+2)$-gon where $j=1,..,\lceil\frac{n-1}{2}\rceil:=\mathrm{min}\{l\in\Z|l\geq\frac{n-1}{2}\}.$
To the $m$-diagonals of $\Pi$ one can associate a quiver $\Gamma^m_{A_{n-1}}$ as follows:
its vertices are $m$-diagonals and the arrows send the diagonal $(i,j)$ to $(i,j+m)$ and $(i+m,j)$ whenever
the image is also an $m$-diagonal. If $m=1,$ we omit the index $m$ and write $\Gamma_{A_{n-1}}$.
Furthermore, defining an automorphism $\tau_m$ as the map sending the $m$-diagonal $(i,j)$ to $(i-m,j-m)$ one makes the pair
$(\Gamma^m_{A_{n-1}}, \tau_m)$ into a (stable) translation quiver in the sense of C. Riedtmann.
As $\tau_m$ is defined on all vertices, the translation quiver $(\Gamma^m_{A_{n-1}}, \tau_m)$ is stable.
We refer to the examples in Section \ref{ex} for an illustration of this construction.

\begin{thm}\label{thmARdiag}
Let $Q$ be a quiver of Dynkin type $A_{n-1},$ and let $m\geq1.$ Then the Auslander-Reiten quiver of
$\mathcal{C}^m$ is isomorphic to $\Gamma^m_{A_{n-1}}$.
\end{thm}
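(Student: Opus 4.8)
The plan is to produce an explicit isomorphism of stable translation quivers between $\Z Q/\varphi^m$ and $\Gamma^m_{A_{n-1}}$. Indeed, by Happel's theorem the Auslander--Reiten quiver of $\mathcal{D}^b(\mathrm{mod }kQ)$ is $\Z Q$, and by Proposition 1.3 of \cite{BMRRT} (recalled above) the Auslander--Reiten quiver of $\mathcal{C}^m$ is the orbit quiver $\Z Q/\varphi^m$, where $\varphi^m$ is the automorphism induced by $\tau^{-1}\Sigma^m$. Since $(\Gamma^m_{A_{n-1}},\tau_m)$ is already known to be a stable translation quiver, the statement reduces to matching these two combinatorial objects vertex by vertex, compatibly with both the arrows and the translations.

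First I would coordinatize $\Z Q$ for $Q$ of type $A_{n-1}$, writing its vertices so that the graph automorphisms $\tau$ and $\Sigma$ become explicit. Using the standard identification of the indecomposables of $\mathcal{D}^b(\mathrm{mod }kQ)$ with intervals, one labels each vertex by a pair and reads off $\tau$ as a unit shift and $\Sigma$ in terms of the Coxeter number $h=n$ of $A_{n-1}$; this pins down $\varphi^m=\tau^{-1}\Sigma^m$ concretely. As a guide, and as the base case, I would invoke the bijection of \cite{ccs}: for $m=1$ the diagonals of the $(n+2)$-gon are in bijection with the vertices of $\Z Q/\varphi$, the translation $\tau$ acting as a rotation of $\Pi$ by one vertex. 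This is exactly $\Gamma_{A_{n-1}}$, so the case $m=1$ is the content of \cite{ccs}.

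Next I would define the candidate map $\Psi$ on vertices by sending an $m$-diagonal $(i,j)$ of the $(nm+2)$-gon to the $\varphi^m$-orbit prescribed by the coordinates of the first step. Bijectivity I would check in two ways: by counting, verifying that the number of $m$-diagonals of $\Pi$, namely those cutting $\Pi$ into an $(mj+2)$- and an $(m(n-j)+2)$-gon, equals the number of indecomposables of $\mathcal{C}^m$; and structurally, by matching the rotation of $\Pi$ taken modulo $nm+2$ with the periodicity of the $\varphi^m$-action, so that the mod-$(nm+2)$ identification on the polygon side corresponds exactly to the orbit identification on the category side.

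It then remains to check that $\Psi$ is a morphism of translation quivers. Compatibility with the translation amounts to showing that the Auslander--Reiten translation of $\mathcal{C}^m$ corresponds to the rotation $(i,j)\mapsto(i-m,j-m)$, i.e. $\Psi\circ\tau=\tau_m\circ\Psi$, which follows once the $\Sigma$-shift has been tracked on coordinates. Compatibility with arrows is the main obstacle: one must verify that the two families of arrows $(i,j)\to(i,j+m)$ and $(i,j)\to(i+m,j)$ reproduce precisely the irreducible morphisms, equivalently the meshes, of $\Z Q/\varphi^m$. For $m=1$ this is \cite{ccs}, but for $m>1$ the delicate point is that an arrow is drawn only \emph{when the image is again an $m$-diagonal}. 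I expect the crux of the proof to be showing that this geometric condition, governed by the requirement that both pieces have $2$ more than a multiple of $m$ sides, matches exactly the presence or absence of an arrow in the mesh structure of $\Z Q/\varphi^m$, with the boundary cases (where $(i,j+m)$ or $(i+m,j)$ fails to be an $m$-diagonal) corresponding exactly to the places where the mesh produces no arrow. Establishing this arrow-by-arrow correspondence, and hence that $\Psi$ and its inverse preserve the full translation-quiver structure, completes the identification and recovers the result of \cite{bm1}.
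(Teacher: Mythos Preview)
The paper does not actually prove this theorem: it is stated as a known result, attributed to \cite{ccs} for $m=1$ and to Proposition~5.5 of \cite{bm1} for general $m\geq 1$, and is then used freely throughout. So there is no ``paper's own proof'' to compare your proposal against.

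That said, your sketch is a reasonable outline of how the result is established in \cite{bm1}: identify the Auslander--Reiten quiver of $\mathcal{C}^m$ with $\Z Q/\varphi^m$ via Happel's theorem and \cite[Prop.~1.3]{BMRRT}, coordinatize, and then build an explicit bijection with the $m$-diagonals of the $(nm+2)$-gon that intertwines $\tau$ with $\tau_m$ and matches arrows with the pivoting rules. The delicate boundary-matching you highlight (when $(i,j+m)$ or $(i+m,j)$ fails to be an $m$-diagonal) is indeed the point that requires care, and your proposal correctly identifies it. For the present paper, however, no argument is needed beyond the citation; you could simply write ``See \cite[Section~2]{ccs} for $m=1$ and \cite[Proposition~5.5]{bm1} for general $m$.''
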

Because of this theorem in the following we tacitly switch from one quiver to the other.

Now we recall the procedure of taking the $m$-th power of a given translation quiver.
\begin{de}
Let $(\Gamma, \tau)$ be a translation quiver. The quiver $(\Gamma)^m$ whose vertices are the same as the ones from $\Gamma$
and whose arrows are sectional paths of length $m$ is called the {\em$m$-th power of $\Gamma$}. A path 
$(x=x_0\rightarrow x_1\rightarrow...\rightarrow x_{m-1}\rightarrow x_m=y)$ in $\Gamma$ is said to be
sectional if $\tau x_{i+1}\neq x_{i-1}$ for $i=1,...,m-1$ (for which $\tau x_{i+1}$ is defined )
\end{de}
Observe that the pair $((\Gamma)^m,\tau^m)$ where $\tau^m:=\tau\circ...\circ\tau,$ composed $m$ times, is again
a translation quiver ( Theorem 6.1 in \cite{bm1}), and if $\Gamma$ is a stable translation quiver, then so is $((\Gamma)^m,\tau^m)$.
Furthermore, the next result (Proposition 7.1 in \cite{bm1}) shows how the quivers introduced so far are related to each other.
\begin{prop}\label{conne}
 $(\Gamma^m_{A_{n-1}},\tau_m)$ is a connected component of the $m$-th power of $(\Gamma_{A_{nm-1}},\tau).$
\end{prop}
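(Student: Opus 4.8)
The plan is to realise both translation quivers on the diagonals of the single polygon $\Pi$, which has $N=nm+2$ vertices, and to compare their vertices, arrows and translations directly. Writing $(i,j)$ for the diagonal joining vertices $i$ and $j$, the vertices of $\Gamma_{A_{nm-1}}$ are \emph{all} diagonals of $\Pi$, while $(i,j)$ is an $m$-diagonal exactly when the sub-polygon on $i,i+1,\dots,j$ has $j-i+1=ma+2$ sides for some $a$, i.e. when $j-i\equiv 1 \pmod m$. Since a genuine diagonal also requires $2\le j-i\le N-2$, the $m$-diagonals are precisely the pairs with $j-i=km+1$ for $1\le k\le n-1$; note the resulting buffers $j-i\ge m+1$ and $N-(j-i)=(n-k)m+1\ge m+1$ on either side.

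First I would determine the arrows of the $m$-th power $(\Gamma_{A_{nm-1}})^m$. Every arrow of $\Gamma_{A_{nm-1}}$ raises exactly one coordinate by $1$, so each step of a length-$m$ path is a vector in $\{(1,0),(0,1)\}$ and the coordinates increase by a total of $m$. The sectional condition $\tau x_{k+1}\neq x_{k-1}$ reads $x_{k+1}\neq x_{k-1}+(1,1)$; as two consecutive steps sum to $(1,1)$ exactly when they differ, a path is sectional if and only if all its steps coincide. Hence the arrows of the power are precisely the \emph{straight} jumps $(i,j)\to(i+m,j)$ and $(i,j)\to(i,j+m)$, present whenever the whole straight path lies in $\Gamma_{A_{nm-1}}$, i.e. all intermediate pairs are genuine diagonals. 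In particular every such arrow, and likewise $\tau^m(i,j)=(i-m,j-m)=\tau_m(i,j)$, preserves the residue $j-i \pmod m$.

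Next I would match these with the arrows of $\Gamma^m_{A_{n-1}}$ on the $m$-diagonals, and this degeneracy check is the one delicate point. A pair $(i,j')$ fails to be a genuine diagonal exactly when $j'-i\equiv 0,1$ or $N-1\pmod N$. For an $m$-diagonal with $j-i=km+1$, along the straight path $(i,j)\to(i,j+m)$ the quantity $j'-i$ runs through $km+2,\dots,(k+1)m$; by the buffer $k\ge1$ the smallest value is $\ge m+2\ge 3$, and when the endpoint $(i,j+m)$ is again an $m$-diagonal (that is, $k\le n-2$) the largest value is $(k+1)m\le(n-1)m=N-2$, so every intermediate value lies in $\{2,\dots,N-2\}$ and no diagonal degenerates. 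The identical computation handles $(i,j)\to(i+m,j)$. Thus on the $m$-diagonals the straight-jump arrows of the power exist exactly when the target is an $m$-diagonal, which is the defining condition for the arrows of $\Gamma^m_{A_{n-1}}$, and the translations agree since $\tau^m=\tau_m$.

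Finally, connectedness. Because the power-arrows and $\tau^m$ all preserve $j-i \pmod m$, the $m$-diagonals (residue $1$) admit no arrow to or from their complement, so they form a union of connected components of $(\Gamma_{A_{nm-1}})^m$. By the previous paragraph the induced translation quiver on them is exactly $(\Gamma^m_{A_{n-1}},\tau_m)$, which is connected: by Theorem~\ref{thmARdiag} it is the Auslander--Reiten quiver of $\mathcal C^m$, and $\mathcal C^m$ is connected as $A_{n-1}$ is connected (alternatively, connectedness follows directly, since any two $m$-diagonals are linked by straight jumps and translations). Being connected and closed under arrows in both directions, the $m$-diagonals constitute a single connected component, which is the assertion.
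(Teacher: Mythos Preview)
Your argument is correct and self-contained. Note, however, that the paper does not actually prove this proposition: it is quoted verbatim as Proposition~7.1 of \cite{bm1}, so there is no ``paper's own proof'' to compare against. Your direct combinatorial verification---identifying the sectional paths of length $m$ in $\Gamma_{A_{nm-1}}$ as exactly the straight paths, observing that these preserve the residue $j-i\pmod m$, and then checking that on the residue-$1$ class the resulting arrows and translation agree with those of $\Gamma^m_{A_{n-1}}$---is precisely the natural approach and is essentially how the result is established in \cite{bm1}.

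One small arithmetic slip to fix: you write ``$(k+1)m\le(n-1)m=N-2$'', but $N-2=nm$, not $(n-1)m$. The inequality you need, $(k+1)m\le N-2$, still holds (indeed $(n-1)m=nm-m<nm=N-2$), so the argument is unaffected; just correct the displayed equality. Everything else---the characterisation of $m$-diagonals by $j-i\equiv 1\pmod m$, the proof that any direction change violates sectionality (including at the boundary rows, where the unique continuation still has $\tau x_{k+1}=x_{k-1}$), the buffer checks ensuring the intermediate pairs stay genuine diagonals, and the closure/connectedness argument---is sound.
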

Generally the $m$-th power of $(\Gamma_{A_{nm-1}},\tau)$ decomposes into several different
connected components. In the last section we will characterize all of them and we will see that apart from 
$(\Gamma^m_{A_{n-1}}, \tau_m)$ there may also occur other components which are isomorphic to the Auslander-Reiten
quiver of $\widetilde{m}$-cluster categories of type $A_n$, for suitable $\widetilde{m}\geq1$.

\section{Geometric description of the triangulated structure of $m$-cluster categories} 

In this section we explain how the triangulated structure of an $m$-cluster category of type $A_{n-1}$ can be comprehended geometrically, and
how it can be understood through the combinatorial tool of taking sectional paths of length $m$.
The first part is devoted to the study of
Auslander-Reiten (AR) triangles, also called almost split triangles, which have a nice geometric description as we will see. 
In the second part we will give the link between AR
triangles of $\mathcal{C}^m$ and the ones of $\mathcal{C}.$ Here we use that $m$-cluster category of type $A_{n-1}$ 
can be obtained from the cluster category of type $A_{nm-1}$
by taking the $m$-th power of the (translation) quiver
$(\Gamma_{A_{nm-1}},\tau)$. In the last part we describe how the triangles 
$A\stackrel{\mu}{\rightarrow} B\rightarrow C\rightarrow\Sigma A$ in $\mathcal{C}$
and $\mathcal{C}^m$
associated to an arbitrary morphism $\mu:A\rightarrow B$ are linked.
For this we will concretely determine the geometric characterization of the object $C$.

In the following we simply write $\mathcal{D}:=\mathcal{D}^b(\mathrm{mod }kQ)$, since 
the choice of $Q$ should be clear from the context.
Recall that the indecomposables of
$\mathcal{D}$ are isomorphic to the stalk complexes with indecomposable stalk (see Paragraph I.5. in \cite{Ha2}).
That is: complexes $X^{\bullet}$ for which there is an index $i_0$ such that $X^{i_0}\neq 0$ and $X^{i}=0$ for all $i\neq i_0.$
The object $X^{i_0}$ is then called stalk. This justifies the identification of $\mathrm{Ind}(\mathcal{D})$ with
$\mathrm{Ind}(\mathrm{mod }kQ)$ used in the following, where $\mathrm{Ind}(?)$ denotes
the full subcategory of indecomposables objects of the corresponding category.

\subsection{Auslander-Reiten triangles of $\mathcal{C}^m$.} For the geometric
description of the AR triangles it is convenient to introduce the following
and observe that almost split sequences are uniquely determined up to isomorphisms
by their ending or starting terms.

Notice that we always take the endpoints of diagonals modulo $N:=nm+2$.
\begin{lemma}\label{crossing}
The $m$-diagonals $(i,j)$ and $(i-m,j-m)$
cross inside $\Pi$ for all $(i,j)$.
\end{lemma}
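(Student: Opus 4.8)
The plan is to reduce the statement to the elementary criterion for when two chords of a convex polygon meet in its interior: two diagonals $(a,b)$ and $(c,d)$ cross strictly inside $\Pi$ precisely when their four endpoints are pairwise distinct and \emph{interleave} around the boundary, i.e. exactly one of $c,d$ lies on each of the two open arcs into which $a$ and $b$ split the remaining vertices. Thus my task becomes: for an $m$-diagonal $(i,j)$, show that exactly one of the translated endpoints $i-m$, $j-m$ lies strictly on each arc joining $i$ to $j$.

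First I would extract the arc-length information hidden in the definition of an $m$-diagonal. By definition $(i,j)$ cuts $\Pi$ into an $(mj'+2)$-gon and an $(m(n-j')+2)$-gon with $1\le j'\le\lceil\frac{n-1}{2}\rceil$, so the two boundary arcs joining $i$ to $j$ consist of $mj'+1$ and $m(n-j')+1$ edges respectively. Since $j'\ge 1$ and $n-j'\ge 1$, both arc-lengths are at least $m+1$. Writing $a$ for the clockwise gap from $i$ to $j$ this says exactly $m<a<N-m$; this strict double inequality is the only input I need from the definition. I would then normalize $i=0$ (legitimate, since the crossing property is invariant under rotation of $\Pi$), so that $j=a$ and the translated endpoints are $j-m=a-m$ and $i-m\equiv N-m$. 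A one-line check finishes the argument: from $m<a$ we get $0<a-m<a$, so $a-m$ lies strictly on the arc from $0$ to $a$; from $a<N-m<N$ we get that $N-m$ lies strictly on the complementary arc from $a$ to $N$. Hence the four vertices occur in the cyclic order $0,\,a-m,\,a,\,N-m$, they interleave, and therefore $(i,j)$ and $(i-m,j-m)$ cross in the interior.

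I do not anticipate a genuine obstacle here; all the content sits in the first step, namely converting ``$m$-diagonal'' into the sharp bound $m<a<N-m$, together with the care needed to keep every inequality strict. The main point to verify is the boundary behaviour: one must confirm that the four endpoints are genuinely distinct, with no shared vertex and no degenerate coincidence such as $a=m$ or $a=N-m$ (both excluded by strictness), so that the chords cross transversally rather than merely touch at a common endpoint. Once strictness is secured the interleaving, and hence the interior crossing, is forced for every $m$-diagonal $(i,j)$.
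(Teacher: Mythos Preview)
Your proof is correct and follows essentially the same approach as the paper: both arguments verify the interleaving criterion for two chords of a convex polygon by exploiting that an $m$-diagonal can be written with $j=i+km+1$ (equivalently, each arc has length at least $m+1$), and then checking the resulting order of the four endpoints. Your version is somewhat more careful about the modular arithmetic---normalizing $i=0$ and explicitly checking distinctness and strictness---whereas the paper handles the wrap-around by splitting into the two cases $i'<i<j'<j$ and $i<i'<j<j'$; but the underlying idea is the same.
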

\begin{proof}
A $m$-diagonal $(i,j)$ can be written as $(i,i+km+1)$ where we assume $i<j$ when writing $(i,j)$. 
From the definition of $\tau_m$ one deduces that the rotated copy $(i',j')$ of $(i,j)$ can be written as 
$(i-m,i+(k-1)m+1)$. Then, one of the following inequalities holds:
$$
i'<i<j'<j,\hspace{2mm}\textrm{ or   }\hspace{5mm} i < i'<j <j' ,
$$
depending on whether $j<j'$ or $j'<j$, and these inequalities define a crossing inside $\Pi$.
\end{proof}

\begin{de}
We call the set of $m$-diagonals $(i,j),(i,j-m), (i-m,j), \tau_m(i,j)=(i-m,j-m)$
written modulo $N$ the {\em framed set of $m$-diagonals of $(i,j)$}. 
The {\em frame} of a crossing $(i,j),(i-m,j-m)$ is given by the $m$-diagonals $(i,j-m),(i-m,j)$.
\end{de}

Notice that framed sets of $m$-diagonals arise from meshes in the translation quiver
\begin{equation*}  
\small
\xymatrix{       &(i-m,j)\ar[dr]  &                        \\
             \tau_m(i,j)\ar[rd]\ar[ru]\ar@{.}[rr] &&  (i,j)  \\
                     &(i,j-m)    \ar[ur]   &                           }
\end{equation*} 
defined by 4 or 3 vertices depending on where $(i,j)$ lies. Furthermore, the objects
corresponding to the tuples $(i-m,j)$ or $(i,j-m)$  which are not diagonals in $\Pi$ are viewed as zero.
In Figure \ref{fig:triangulations} two framed sets of $m$-diagonals are represented.

Next we point out a link which follows from the
definition of AR quivers and Theorem \ref{thmARdiag}, and which will often be used in the following.
\begin{rem} \label{Almsp}
The almost split sequence
$$ 0 \rightarrow M_{\tau_m(i,j)} \rightarrow  M_{(i-m,j)}\oplus M_{(i,j-m)} \rightarrow M_{(i,j)}\rightarrow 0 $$
ending at $M_{(i,j)}$ corresponds to the framed sets of $m$-diagonals of $(i,j)$.
\begin{figure}[h]
 \unitlength=1cm
   \begin{center}
      \psfragscanon
      \psfrag{testo0}{$(i-m,j)$}
      \psfrag{testo1}{$(i,j)$}
      \psfrag{testo2}{$\tau_m(i,j)$}
      \psfrag{testo3}{$(i,j-m)$}
      \psfrag{testo5}{$(i-m,j)$}
      \psfrag{testo6}{$(i,j)$}
      \psfrag{testo7}{$\tau_m(i,j)$}
\includegraphics[width=0.4 \textwidth]{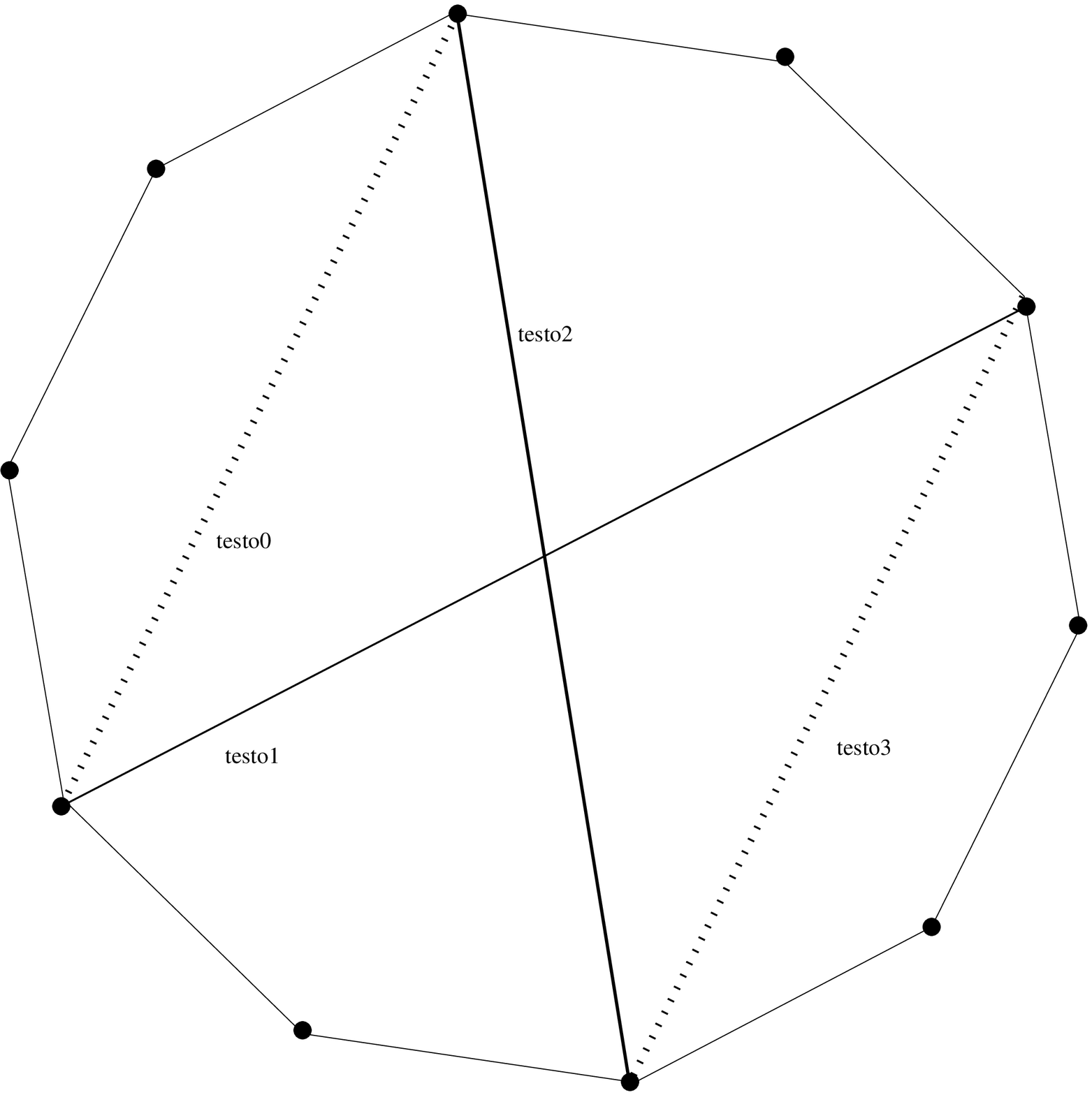} 
\includegraphics[width=0.4 \textwidth]{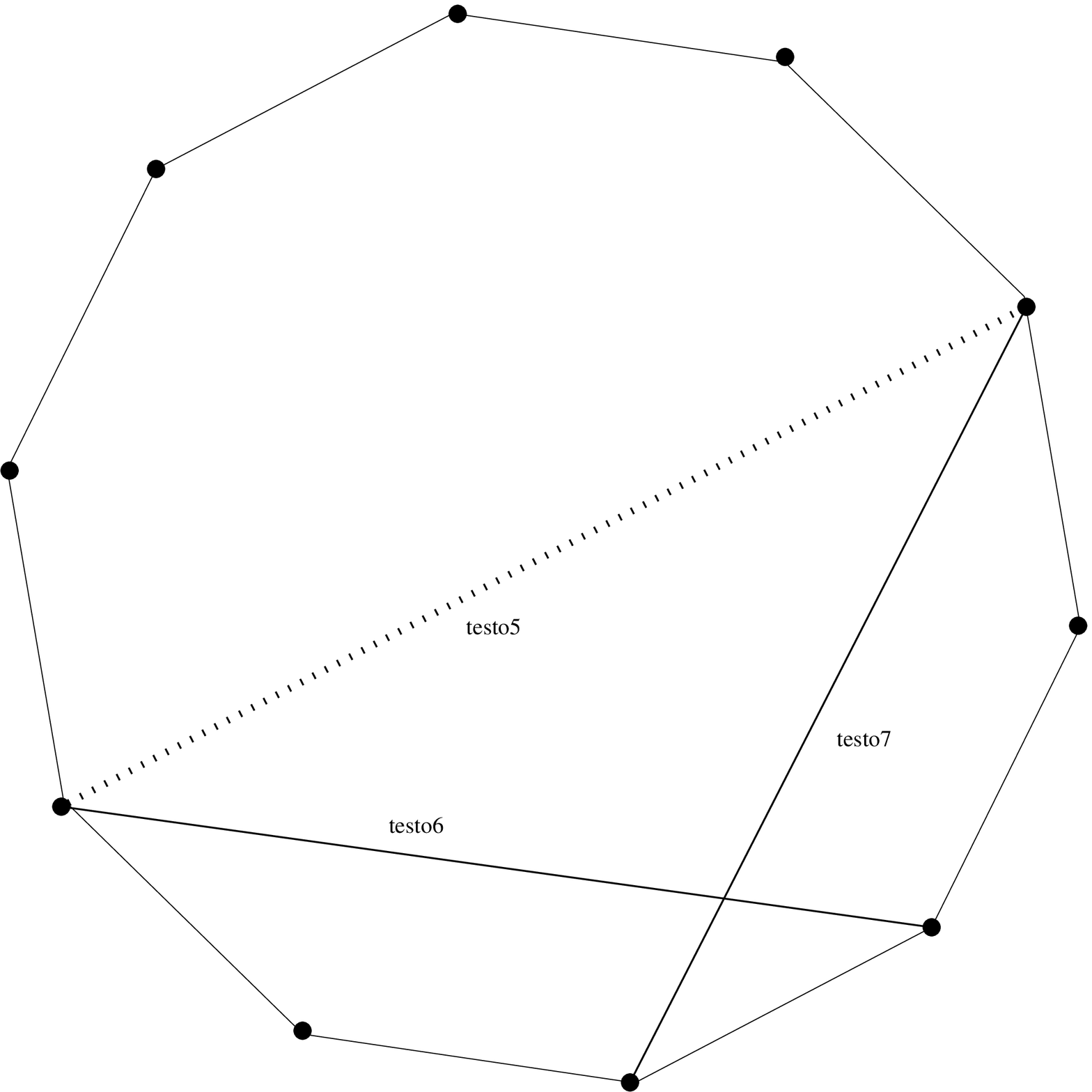}
   \end{center}
\caption{Framed set of $m$-diagonals }\label{fig:triangulations}
\end{figure} 

\end{rem}

For $1$-cluster categories the description of almost split  
sequences  in $\mathrm{mod }kQ$ for $Q$ of type $A_n$ is mentioned in the proof of theorem 5.1 in \cite{ccs}. For 
type $D_n,$ R. Schiffler gave the geometric interpretation of AR triangles in \cite{Sch}.\newline

After the previous observation the description of AR triangles of $\mathcal{C}^m$ also becomes clear.
Recall that $\pi:\mathcal{D}\rightarrow\mathcal{C}^m$ is the projection functor is also clear.

\begin{prop}\label{triang}
Let $M_{(i,j)}\in\mathrm{Ind}(\mathcal{C}^m)$. Then its AR triangle 
is 
$$
M_{\tau_m(i,j)} \rightarrow  M_{(i-m,j)}\oplus M_{(i,j-m)}\rightarrow M_{(i,j)}\stackrel{\pi(w)}{\rightarrow} \Sigma M_{\tau_{m}(i,j)},
$$
with $w$ different from zero.
\end{prop}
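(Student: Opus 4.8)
The plan is to read the three terms of the triangle off the Auslander-Reiten quiver and then to realize the connecting morphism by projecting the corresponding almost split sequence from $\mathcal{D}$. Since $\mathcal{C}^m$ is triangulated, Krull-Schmidt and has Auslander-Reiten triangles (the cited results of \cite{K1} and \cite{BMRRT}), the indecomposable $M_{(i,j)}$ is the right-hand term of a triangle
$$
\tau_m M_{(i,j)} \to E \to M_{(i,j)} \to \Sigma\,\tau_m M_{(i,j)},
$$
unique up to isomorphism. So it remains to identify $\tau_m M_{(i,j)}$ and $E$, and to check that the connecting map is nonzero. For the first term, Theorem \ref{thmARdiag} identifies the AR quiver of $\mathcal{C}^m$ with $\Gamma^m_{A_{n-1}}$, and under this identification the AR translate $\tau_m$ is precisely the translation of $\Gamma^m_{A_{n-1}}$, i.e. $(i,j)\mapsto(i-m,j-m)$. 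Hence $\tau_m M_{(i,j)} = M_{(i-m,j-m)} = M_{\tau_m(i,j)}$, which also fixes the outer term after applying $\Sigma$.

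Next I would determine the middle term $E$. By the general theory of AR triangles the indecomposable summands of $E$ are exactly the sources of arrows ending at $M_{(i,j)}$ in the AR quiver, counted with multiplicity. By Theorem \ref{thmARdiag} these are read off from the mesh displayed before Remark \ref{Almsp}: the only arrows into $(i,j)$ come from $(i-m,j)$ and $(i,j-m)$, and since $\Gamma^m_{A_{n-1}}$ has no multiple arrows each occurs once. When $M_{(i,j)}$ lies near the border of $\Pi$, so that one of $(i-m,j)$ or $(i,j-m)$ fails to be an $m$-diagonal, the mesh degenerates to three vertices and that summand is the zero object by the convention fixed in Remark \ref{Almsp}; in every case this yields $E = M_{(i-m,j)}\oplus M_{(i,j-m)}$.

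It remains to produce the connecting morphism and to see that it does not vanish. By Remark \ref{Almsp}, the almost split sequence ending at $M_{(i,j)}$, read as a triangle in $\mathcal{D}$ (where every indecomposable has an AR triangle, see \cite{Ha2}), is the AR triangle of $M_{(i,j)}$ in $\mathcal{D}$ with some connecting morphism $w$; as the sequence is non-split, $w\neq0$. Applying the projection functor $\pi$, which is a triangle functor by \cite{K1} and which intertwines $\tau$ on $\mathcal{D}$ with $\tau_m$ on $\mathcal{C}^m$, carries this to a triangle in $\mathcal{C}^m$ with the three terms computed above and with connecting morphism $\pi(w)$. The step I expect to require the most care is showing $\pi(w)\neq0$, i.e. that $\pi$ does not split the triangle; I would settle this by invoking that the AR quiver of $\mathcal{C}^m$ is the orbit quiver $\Z Q/\varphi^m$ (Proposition 1.3 in \cite{BMRRT}), so that the projected triangle is again the AR triangle of $M_{(i,j)}$ and hence, being almost split, is non-split, forcing $\pi(w)\neq0$ and matching the statement.
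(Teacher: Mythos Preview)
Your argument is correct and follows a somewhat different route from the paper's. The paper lifts to $\mathcal{D}$ first and splits into cases: when $M_{(i,j)}$ is non-projective it embeds the almost split sequence of Remark~\ref{Almsp} into an AR triangle of $\mathcal{D}$, invokes Lemma~\ref{crossing} together with the crossing/$\mathrm{Ext}^1$ correspondence from \cite{ccs} to argue $w\neq 0$, and then projects; when $M_{(i,j)}$ is projective it instead cites Happel's connecting triangle (Lemma~4.3 in \cite{Ha1}). You bypass this dichotomy by reading the three terms directly off the AR quiver of $\mathcal{C}^m$ via Theorem~\ref{thmARdiag}, which works uniformly, and you get $w\neq 0$ simply because an AR triangle is non-split---more economical than the paper's $\mathrm{Ext}$ computation. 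One small wrinkle: your appeal to Remark~\ref{Almsp} in the last paragraph is literally about almost split sequences in $\mathrm{mod}\,kQ$ and so does not cover projectives, but your parenthetical (every indecomposable of $\mathcal{D}$ has an AR triangle, with terms given by the mesh in $\Z Q$ via \cite{Ha2}) is exactly the right uniform fix. Your final concern, that $\pi$ might split the triangle, is treated no more explicitly in the paper than in your outline; your appeal to the orbit-quiver description is adequate and matches the paper's implicit reasoning.
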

\begin{proof} We divide the proof into two cases depending on whether
the indecomposable object $M_{(i,j)}$ corresponds to a projective object of $\mathrm{mod}kQ$ or not.
Assume first it does not correspond to a projective. 
Then, since the category $\mathrm{mod} k Q$ is hereditary, the almost split sequence ending in $M_{(i,j)}$ described in Lemma \ref{Almsp} 
can be embedded into the following AR triangle of $\mathcal{D}:$
$$
M_{\tau_m(i,j)} \rightarrow  M_{(i-m,j)}\oplus M_{(i,j-m)} \rightarrow M_{(i,j)}\stackrel{w}{\rightarrow} \Sigma M_{\tau_{m}(i,j)},
$$ 
with $w\in\mathrm{Hom}_{\mathcal{D}}(M_{(i,j)} ,\Sigma M_{\tau_{m}(i,j)})=\mathrm{Ext}^1_{\mathrm{mod}kQ}(M_{(i,j)},M_{\tau_m(i,j)})$, 
see I.4.7 in \cite{Ha2}. Furthermore, as we saw in Lemma \ref{crossing} the diagonals $(i,j)$ and $ \tau_m(i,j)$, which
correspond to $M_{(i,j)}$ and $M_{\tau_m(i,j)}$, cross. Hence 
$$\mathrm{Ext}^1_{\mathrm{mod}kQ}(M_{(i,j)},M_{\tau_m(i,j)})\neq 0,$$
as it follows from Remark 5.3 in
\cite{ccs}. 
Applying the triangle functor $\pi:\mathcal{D}\rightarrow\mathcal{C}^m$ 
to it one gets the described triangle in $\mathcal{C}^m$ corresponding to the mesh
ending in the vertex $(i,j)$ of $\Gamma^m_{A_{n-1}}.$ 
For indecomposable objects of $\mathcal{C}^m$ which correspond to projective indecomposables 
of $\mathrm{mod}k Q$ one has that the
AR triangle, in this case also called connecting triangle, is the claimed one by Lemma 4.3 in \cite{Ha1}.
\end{proof}
In the following we will refer to the triangle of Proposition \ref{triang} as the {\em AR triangle of $M_{(i,j)}$}.
Notice that with the same arguments as the one
described in the proof of the previous lemma and taking $Q=A_{nm-1}$ one obtains the AR triangle 
of $M_{(i,j)}$ in $\mathcal{C}$ associated to the vertex $(i,j)$  of $(\Gamma_{A_{nm-1}},\tau).$

In geometric terms the previous result can be restated. 
Observe that the automorphism $\tau_m$ corresponds to an anticlockwise rotation of $\rho_m:=\frac{2m\pi}{ N}$
degrees inside $\Pi$ around the center.

\begin{cor}\label{mestri}
The AR triangle of ${M}_{(i,j)}$ in $\mathcal{C}^m$ 
$$
M_{\tau_m(i,j)}\rightarrow  M_{(i-m,j)}\oplus M_{(i,j-m)} \rightarrow M_{(i,j)}\stackrel{w}{\rightarrow} \Sigma M_{\tau_{m}(i,j)},
$$ 
is described by union of the framed set of $m$-diagonals of $(i,j)$ with an anticlockwise rotation of $(i,j)$ around the center
of $2\rho_m$ degrees.
\end{cor}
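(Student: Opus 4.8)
The plan is to read the four terms of the triangle off Proposition \ref{triang} and transport each of them into the polygon $\Pi$. First I would quote Proposition \ref{triang}, which gives the AR triangle
$$M_{\tau_m(i,j)}\rightarrow M_{(i-m,j)}\oplus M_{(i,j-m)}\rightarrow M_{(i,j)}\stackrel{w}{\rightarrow}\Sigma M_{\tau_m(i,j)},$$
and note that its three leftmost terms are supported exactly on the $m$-diagonals $(i,j)$, $(i-m,j)$, $(i,j-m)$ and $\tau_m(i,j)=(i-m,j-m)$, with the usual convention that a tuple which is not a diagonal of $\Pi$ contributes a zero summand. By the definition of the framed set these are precisely the four $m$-diagonals of the framed set of $(i,j)$, and the mesh displayed before Remark \ref{Almsp} shows that they assemble into the AR mesh ending at $(i,j)$. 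Thus the ``framed set'' half of the assertion is immediate, and the whole content of the corollary is the geometric location of the fourth term $\Sigma M_{\tau_m(i,j)}$.

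To place this fourth term I would first rewrite it using the orbit construction of $\mathcal{C}^m$. Since $\mathcal{C}^m$ is Calabi--Yau of dimension $m+1$, its Serre functor is $S=\Sigma^{m+1}$, while the relation $\tau^{-1}\Sigma^m\cong\mathrm{id}$ defining the orbit category yields $\tau_m=\Sigma^m$ on $\mathcal{C}^m$. Because $\Sigma$ and $\tau_m$ commute, the fourth vertex can therefore be written as
$$\Sigma M_{\tau_m(i,j)}=\Sigma\,\Sigma^{m}M_{(i,j)}=S\,M_{(i,j)}.$$
In other words, the extra diagonal is the image of $(i,j)$ under the Serre functor, and the task is reduced to describing $S$ on the quiver $\Gamma^m_{A_{n-1}}$. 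Using that $\tau_m$ is the anticlockwise rotation by $\rho_m$, the expectation is that $\Sigma$ contributes one further rotation of the same size, so that $S=\tau_m^2$ acts on $m$-diagonals as the anticlockwise rotation by $2\rho_m$; this would send $(i,j)$ to $(i-2m,j-2m)$, which is exactly the diagonal claimed in the statement.

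The hard part will be this last identification, namely verifying that on the geometric model the shift $\Sigma$ acts on $m$-diagonals as the rotation $\tau_m$, so that $S$ is the rotation by $2\rho_m$ rather than by some other angle. I would establish this by tracking a single $m$-diagonal through the identification of $\mathrm{Ind}(\mathcal{C}^m)$ with the $m$-diagonals of $\Pi$ --- for instance on a boundary vertex, where one of the summands $M_{(i-m,j)}$, $M_{(i,j-m)}$ vanishes and the fourth term can be read off from the degenerate triangle directly --- and then propagating the answer to all vertices by the translation structure, since $S$ commutes with $\tau_m$ and with the mesh relations. Alternatively, one can compute $S$ after realizing $(\Gamma^m_{A_{n-1}},\tau_m)$ as the connected component of the $m$-th power of $(\Gamma_{A_{nm-1}},\tau)$ from Proposition \ref{conne}, where the ambient shift is the one-vertex rotation; the delicate point in either route is to pin down the precise rotation angle, and this is where the geometric action of $\Sigma$ must be controlled carefully.
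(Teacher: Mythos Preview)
Your argument follows the same route as the paper's proof: cite Proposition~\ref{triang} for the AR triangle, match the first three terms to the framed set of $m$-diagonals of $(i,j)$, and then locate the fourth term $\Sigma M_{\tau_m(i,j)}$ as the rotation of $(i,j)$ by $2\rho_m$. The paper does this last step in one line by asserting that $\Sigma\cong\tau_m$ in $\mathcal{C}^m$, whence $\Sigma M_{\tau_m(i,j)}=M_{\tau_m^2(i,j)}$.

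Your passage through the Calabi--Yau/Serre formalism is correct but does not shortcut anything. From $S=\Sigma^{m+1}$ and $\tau_m=\Sigma^m$ you obtain $\Sigma\tau_m=S$, but to then identify $S$ with $\tau_m^2=\Sigma^{2m}$ you are forced back to $\Sigma^{m-1}\cong\mathrm{id}$, i.e.\ $\Sigma\cong\tau_m$ --- precisely the statement you flag as ``the hard part'' and precisely what the paper invokes directly without the detour. So the Serre-functor rewriting adds nothing over the paper's one-line reduction. Where you do go beyond the paper is in recognising that $\Sigma\cong\tau_m$ is the substantive claim and in sketching two ways to check it (tracking a boundary $m$-diagonal, or reading $\Sigma$ through the $m$-th power description of Proposition~\ref{conne}); the paper simply states the isomorphism and moves on, and you do not carry your verifications to completion either.
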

\begin{proof}
Follows from the two previous results and the fact that the shift 
functor $\Sigma$ in $\mathcal{C}^m$ is induced from the shift functor in $\mathcal{D}$ and satisfies $\Sigma\cong\tau_m.$
Thus it
corresponds to an anticlockwise rotation of $\rho_m$ degrees around the center
of $\Pi$, and the last term in the AR triangle  
of $M_{(i,j)}$ in $\mathcal{C}^m$
satisfies: $\Sigma M_{\tau_{m}(i,j)}= M_{2\tau_{m}(i,j)},$
i.e. is obtained from a rotation of $(i,j)$ of $2\rho_m$ degrees.
\end{proof}

We saw in Proposition \ref{conne} how the category $\mathcal{C}^m$ is related to $\mathcal{C}$
through the procedure of the $m$-th power of the translation quiver
$(\Gamma_{A_{nm-1}},\tau)$.  
Now, we will explain how the triangulated structures of these two categories
are related.

\begin{de} The {\em $m$-dilatation of a mesh} in $\Gamma_{A_{nm-1}}$ ending in $(i,j)$
is given by the mesh of $m$-diagonals $(i-m,j-m),(i-m,j),(i,j-m)$ and $(i,j)$, whenever defined.
\end{de}
Geometrically the $m$-dilatation enables us to pass 
from one mesh in $\Gamma$, or framed set of diagonals, to one in  $\Gamma^m$ in the following way.  One starts with the diagonal $(i,j)$ of $\Pi$ and
rotate it of $\rho_m$ degrees counterclockwise around the center of $\Pi$. This gives an $m$-diagonal $(i-m,j-m)$, which
crosses $(i,j)$ by Lemma \ref{crossing}. This crossing can be framed in a unique way by the two 
$m$-diagonals $(i-m,j)$ and $(i,j-m)$. The set of diagonals we obtain is then the $m$-dilatation of the mesh
of $(i,j)$.

\begin{prop} The $m$-dilatation of the mesh in $\Gamma_{A_{nm-1}}$ ending in $(i,j)$
maps the $AR$-triangle of $M_{(i,j)}$ in $\mathcal{C}$ to the one in $\mathcal{C}^m$.
\end{prop}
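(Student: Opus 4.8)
The plan is to prove the statement by directly comparing the two explicit Auslander--Reiten triangles furnished by Proposition \ref{triang} and Corollary \ref{mestri}, and then checking that the combinatorial operation of $m$-dilatation carries one onto the other vertex by vertex. Since, by the remark opening this subsection, an almost split triangle is determined up to isomorphism by its ending (equivalently its starting) term, it suffices to match the underlying meshes of $m$-diagonals together with the shift term; no further data need be tracked.

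First I would write down both triangles explicitly. Taking $Q=A_{nm-1}$ in Proposition \ref{triang} and using $\Sigma\cong\tau$ in $\mathcal{C}$ from Corollary \ref{mestri}, the AR triangle of $M_{(i,j)}$ in $\mathcal{C}$ reads
$$
M_{(i-1,j-1)}\rightarrow M_{(i-1,j)}\oplus M_{(i,j-1)}\rightarrow M_{(i,j)}\stackrel{w}{\rightarrow}M_{(i-2,j-2)},
$$
its mesh being the framed set $(i-1,j-1),(i-1,j),(i,j-1),(i,j)$. For $\mathcal{C}^m$, Proposition \ref{triang} together with Corollary \ref{mestri} gives
$$
M_{(i-m,j-m)}\rightarrow M_{(i-m,j)}\oplus M_{(i,j-m)}\rightarrow M_{(i,j)}\stackrel{\pi(w)}{\rightarrow}M_{(i-2m,j-2m)},
$$
with mesh $(i-m,j-m),(i-m,j),(i,j-m),(i,j)$.

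Next I would invoke the definition of the $m$-dilatation of the mesh ending in $(i,j)$: it fixes $(i,j)$ and replaces the remaining three vertices of the $\mathcal{C}$-mesh by the corresponding $m$-diagonals, i.e.
$$
(i,j)\mapsto(i,j),\quad (i-1,j)\mapsto(i-m,j),\quad (i,j-1)\mapsto(i,j-m),\quad (i-1,j-1)\mapsto(i-m,j-m).
$$
This is exactly the termwise correspondence between the first three terms of the two triangles above; vertices of the $\mathcal{C}$-mesh that are not diagonals of $\Pi$, hence read as zero, are sent to tuples that fail to be $m$-diagonals of $\Pi$ and are likewise zero, so the middle direct sum is respected. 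For the shift term I would use that $\Sigma\cong\tau$ on $\mathcal{C}$ and $\Sigma\cong\tau_m$ on $\mathcal{C}^m$ (Corollary \ref{mestri}): the dilatation turns a $\tau$-step into a $\tau_m$-step, so the $\mathcal{C}$-shift $M_{(i-2,j-2)}=\Sigma M_{\tau(i,j)}$ is carried to $M_{(i-2m,j-2m)}=\Sigma M_{\tau_m(i,j)}$, the shift term in $\mathcal{C}^m$. Equivalently, once the first three terms match, the fourth is forced as the cone and agrees.

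The argument is in essence a bookkeeping unwinding of the two explicit descriptions, so I do not expect a serious obstacle. The only points requiring care are the well-definedness of the dilatation, handled by the clause ``whenever defined'' in its definition and by the zero-object convention attached to framed sets, and the verification that matching ending terms suffices, which rests on the uniqueness of AR triangles recorded at the start of this subsection. Collecting these observations completes the proof.
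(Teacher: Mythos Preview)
Your proposal is correct and follows essentially the same approach as the paper: write out the framed sets underlying the AR triangles in $\mathcal{C}$ and $\mathcal{C}^m$ via Proposition \ref{triang} and Corollary \ref{mestri}, and observe that the $m$-dilatation carries one to the other. You include more detail than the paper (which simply lists the two framed sets and remarks they are linked by an $m$-dilatation), in particular your explicit treatment of the shift term and the zero-object convention, but the argument is the same.
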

\begin{proof} The AR triangles of $M_{(i,j)}$ in $\mathcal{C}$, respectively in 
$\mathcal{C}^m$ were given in Corollary \ref{mestri}. The respective
framed sets of diagonals are $(i-1,j-1),(i,j-1),(i-1,j),(i,j)$ and
$(i-m,j-m),(i,j-m),(i-m,j),(i,j)$, which are linked by an $m$-dilatation.
\end{proof}

\subsection{Triangles arising from a morphism in $\mathcal{C}^m$.}
We saw that the indecomposable objects of $\mathcal{C}^m$ are isomorphic to certain indecomposable 
objects of $\mathcal{C}$,
and since  both categories are triangulated one has that
for a morphism $\mu$ from $M$ to $N$ there is a 
triangle $M\stackrel{\mu}{\rightarrow} N\rightarrow P\rightarrow \Sigma M $ 
in $\mathcal{C}$ and one in $\mathcal{C}^m$. In this section
we study how
those two triangles are related to each other. 
To answer this we first
recall the following definition, which will help understanding the morphisms in $\mathrm{Ind}(\mathcal{C}^m)$ as 
the latter is equivalent to the mesh category of $\Gamma^m_{A_{n-1}}$ (Theorem 5.6 in \cite{bm1}).
\begin{de}
The {\em mesh category} of a translation quiver $Q$ is the
factor category of the path category of $Q$ modulo the ideal generated by the mesh relations
$$
r_v:=\sum_{\alpha:u\rightarrow v}\alpha\cdot\sigma(\alpha),
$$
where the sum is over all arrows ending in $v$, and $v$ runs through the vertices of $Q$.
\end{de}
Geometrically the mesh relations are described in \cite{ccs} as follows.
First recall that two diagonals $\alpha$ and $\alpha'$ of $\Pi$ are related by a \textit{pivoting elementary 
move} if they share a vertex, called \textit{pivot},
and if there is a positive rotation around the pivot from $\alpha$ to $\alpha'$ inside $\Pi$. This rotation must be minimal.
A \textit{pivoting path} from $\alpha$ to $\alpha'$ is a sequence of pivoting elementary moves starting at $\alpha$ and ending at $\alpha'$.
If there are $m$ moves in such a path, we say that the path \textit{ has length} $m$. 
Now, for any two diagonals $\alpha$ and $\alpha'$ of $\Pi$ such that $\alpha$ is related to
$\alpha'$ by two consecutive pivoting elementary moves with distinct pivots, one defines the\textit{ mesh relations}: 
$P_{v'_2}P_{v_1}=P_{v'_1}P_{v_2},$ where $v_1,v_2$ are the vertices of the diagonal $\alpha$
and $v'_1,v'_2$ are the vertices of $\alpha',$ such that $P_{v'_1}P_{v_2}(\alpha)=\alpha'.$ In this definition
the border edges of $\Pi$ are allowed with the convention that the term corresponding
to such an edge in the mesh relation is replaced by zero.

As next we turn our attention to the morphisms in $\mathcal{C}^m,$ for this we let
$A:=\bigoplus_{1\leq i\leq m} M_i$ and $B:=\bigoplus_{1\leq j\leq n} N_j$ 
be objects in $\mathcal{C}^m$ with $M_i$ and $N_j$ indecomposables for all $i,j$'s.

Since $\mathcal{C}^m$ is an additive 
category one can identify an arbitrary $\mu\in\mathrm{Hom}_{\mathcal{C}^m}(A,B)$
with the $m \times n$ matrix  $(\mu_{i,j})_{1\leq i,j\leq m,n}$,
where $\mu_{i,j}:\pi_j \circ \mu \circ \iota_i\in\mathrm{Hom}_{\mathcal{C}^m}(M_i,N_j),$ with
$\pi_j:B\twoheadrightarrow N_j$,
and $\iota_i:M_i\hookrightarrow A$, as can be seen for example in Appendix A in \cite{ass}.

Furthermore, from the shape of the AR quiver of the $m$-cluster category it follows
that elements in $\mathrm{Hom}_{\mathcal{C}^m}(M_i,N_j)$
are compositions of irreducible morphisms, and the latter satisfy the following property.
\begin{lemma}\label{decomppath}
Let $M,N\in\mathrm{Ind}(\mathcal{C}^m)$. Then 
$\mu\in\mathrm{Irr}_{\mathcal{C}^m}(M,N)$ if and only if
$$
\mu=\nu_{m-1}\circ\dots\circ\nu_{0}
$$
for $I_i\in\mathcal{C},$ $0\leq i \leq m$, such that 
$I_0=M$, $I_{m}=N$ and $\tau I_{i+1}\ncong I_{i-1}$ for $0 < i < m$ and
$\nu_i\in \mathrm{Irr}_{\mathcal{C}}(I_i,I_{i+1})$ for $0\leq i \leq m-1$ 
\end{lemma}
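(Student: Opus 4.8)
The statement to prove is Lemma~\ref{decomppath}: for indecomposables $M,N$ of $\mathcal{C}^m$, a morphism $\mu$ is irreducible in $\mathcal{C}^m$ precisely when it factors as a composite $\nu_{m-1}\circ\dots\circ\nu_0$ of irreducible morphisms in $\mathcal{C}$ through objects $I_0=M,\dots,I_m=N$ satisfying the sectionality condition $\tau I_{i+1}\ncong I_{i-1}$. The guiding principle is that, by Theorem~\ref{thmARdiag} together with the identification of $\mathrm{Ind}(\mathcal{C}^m)$ with the mesh category of $\Gamma^m_{A_{n-1}}$ (Theorem~5.6 in \cite{bm1}), irreducible morphisms in a Krull--Schmidt category are exactly the arrows of its AR quiver. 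So the whole lemma reduces to a statement purely about the combinatorics of the AR quivers: the arrows of $\Gamma^m_{A_{n-1}}$ are, by the very definition of the $m$-th power, the sectional paths of length $m$ in $\Gamma_{A_{nm-1}}$.

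\textbf{Key steps.} First I would translate both sides into quiver-theoretic language. By the definition of the AR quiver, $\mu\in\mathrm{Irr}_{\mathcal{C}^m}(M,N)$ means, up to scalar, that there is an arrow from $[M]$ to $[N]$ in the AR quiver of $\mathcal{C}^m$, which by Theorem~\ref{thmARdiag} is $\Gamma^m_{A_{n-1}}$. Likewise $\nu_i\in\mathrm{Irr}_{\mathcal{C}}(I_i,I_{i+1})$ means there is an arrow $I_i\to I_{i+1}$ in the AR quiver of $\mathcal{C}$, namely $\Gamma_{A_{nm-1}}$. Next I would invoke Proposition~\ref{conne}, which tells us $\Gamma^m_{A_{n-1}}$ is a connected component of the $m$-th power of $\Gamma_{A_{nm-1}}$, so that the vertices $M,N$ lie in $\Gamma_{A_{nm-1}}$ and the arrows of the $m$-th power are governed by the Definition of $m$-th power. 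By that definition, an arrow $M\to N$ in $(\Gamma_{A_{nm-1}})^m$ is exactly a sectional path $M=x_0\to x_1\to\dots\to x_m=N$ of length $m$, where sectional means $\tau x_{i+1}\neq x_{i-1}$ for $1\le i\le m-1$. Setting $I_i=x_i$ and letting $\nu_i$ be the irreducible morphism along the arrow $x_i\to x_{i+1}$, the sectionality condition $\tau x_{i+1}\neq x_{i-1}$ in the quiver becomes precisely $\tau I_{i+1}\ncong I_{i-1}$ in the category. This gives the equivalence: an arrow in $\Gamma^m_{A_{n-1}}$ (i.e. an irreducible map in $\mathcal{C}^m$) corresponds bijectively to a sectional path of length $m$ in $\Gamma_{A_{nm-1}}$, i.e. a composite of irreducibles in $\mathcal{C}$ with the stated constraints. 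The remaining task is to upgrade this arrow-level correspondence to the level of actual morphisms: one checks that composing irreducible morphisms $\nu_i$ along a sectional path yields a nonzero morphism in the mesh category (the sectionality is exactly what prevents the composite from landing in a mesh relation and hence vanishing), and conversely that a representative of an irreducible morphism in $\mathcal{C}^m$ can be lifted to such a composite. Here I would use the explicit description of mesh relations recalled just before the lemma: two consecutive non-sectional moves are identified via $P_{v'_2}P_{v_1}=P_{v'_1}P_{v_2}$, so a path is sectional exactly when it represents a nonzero, non-factorizable element modulo the mesh ideal.

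\textbf{Main obstacle.} The delicate point is not the bijection of arrows, which is immediate from the definitions, but verifying that the sectionality condition faithfully matches the condition for the corresponding composite of irreducibles to be itself irreducible (rather than decomposable into shorter pieces) in $\mathcal{C}^m$. In other words, I must confirm that a length-$m$ sectional path maps to a genuine single arrow of $\Gamma^m_{A_{n-1}}$ and not to a longer composite, and that every irreducible in $\mathcal{C}^m$ arises this way with $M,N$ indecomposable. This amounts to showing that the mesh relations of $\mathcal{C}^m$ are compatible with those of $\mathcal{C}$ under the $m$-th power operation, so that irreducibility in $\mathcal{C}^m$ is detected exactly by length-$m$ sectionality in $\mathcal{C}$. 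I expect this to follow from the equivalence with the mesh category (Theorem~5.6 in \cite{bm1}) together with the translation-quiver structure established in Proposition~\ref{conne}, but the bookkeeping of matching the $\tau$-translation on $\Gamma^m_{A_{n-1}}$ (which is $\tau^m$) against $\tau$ on $\Gamma_{A_{nm-1}}$ must be handled carefully, especially near the boundary of $\Pi$ where some of the $m$-diagonals in a mesh degenerate to zero objects.
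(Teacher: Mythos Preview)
Your proposal is correct and follows essentially the same route as the paper: both arguments reduce the statement to the combinatorial fact that arrows of $\Gamma^m_{A_{n-1}}$ are, by Proposition~\ref{conne} and the definition of the $m$-th power, precisely the sectional paths of length $m$ in $\Gamma_{A_{nm-1}}$. The only cosmetic difference is that the paper pins down the irreducible $\mu$ as one of the two components of the source map in the AR~triangle $M\to N_1\oplus N_2\to\tau_m^{-1}M\to\Sigma M$ (using the shape of the quiver to force $N\cong N_1$ or $N\cong N_2$), whereas you invoke the mesh-category equivalence directly; both are just ways of saying that irreducibles in $\mathcal{C}^m$ are the arrows of its AR~quiver, and from there the argument is identical.
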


\begin{proof} One implication is obvious, for the other one
let $\mu\in\mathrm{Irr}_{\mathcal{C}^m}(M,N)$, and consider the AR triangle 
in $\mathcal{C}^m$ 
$$M \stackrel{(\mu_1,\mu_2)}{\rightarrow} N_{1}\oplus N_{2}\rightarrow \tau_m^{-1} M
\rightarrow \Sigma M$$ where
$\mu_i\in\mathrm{Irr}_{\mathcal{C}^m}(M, N_i)$ for $i=1,2.$ 
From the shape of the AR quiver of $\mathcal{C}^m$ and since we assume both that 
$N$ is indecomposable and $\mu$ is irreducible, 
we deduce that either $N\cong N_1$ or $N\cong N_2.$ Otherwise
$\mu$ would be the zero morphism. Consequently, $\mu$ is either equal to $\mu_1$ or to $\mu_2$.
Furthermore, it follows
from Theorem \ref{thmARdiag}, Proposition \ref{conne} and the definition of $m$-th power of a translation quiver
that as morphism in $\mathcal{C}$, $\mu$ has to be of the form $\mu=\nu_{m_1}\circ\nu_{2_1}\circ\dots\circ\nu_{1_1}$ 
or $\mu=\nu_{m_2}\circ\nu_{2_2}\circ\dots\circ\nu_{1_2}$, here 
$\nu_{i_j}\in \mathrm{Irr}_{\mathcal{C}}(I_{i},I_{i_j})$ for $j=1,2$; $I_0:=M$ and $I_{m}:=N$.
This shows that $\mu$ corresponds to a sectional path of length $m$ in 
$\Gamma_{A_{nm-1}}$ starting at the vertex for $I_0$ and with $I_{i_j}$ corresponding to the vertices
along this path.

\end{proof}
In geometric terms the previous result says that if we  associate to the indecomposables
$M,N\in\mathcal{C}^m$ the $m$-diagonals $(i,j)$ and $(k,l)$ in $\Pi$
then $\mu\in\mathrm{Hom}_{\mathcal{C}^m}(M,N)$ is irreducible if and only if
it corresponds to a composition of $m$ elementary moves
around a common pivot, modulo the mesh relations.

To give a geometric meaning of the third term in the triangle associated to a morphism we want to study short exact
sequences. But since cluster categories are not abelian, not every morphism admits a kernel and cokernel, so we don't
have a direct access to short exact sequences. Nevertheless, the next result
tells us how to bridge this obstacle.

Remember from Proposition 1.6 in \cite{BMRRT} 
that for an $M\in\mathrm{Ind}(\mathcal{C})$ a representative of $\pi^{-1}(M)$
under the projection $\pi:\mathcal{D}\rightarrow\mathcal{C}$ is an object contained in the 
{\em fundamental domain} $\mathcal{S}$ for the action of
$F:=\tau^{-1}\Sigma$  on $\mathrm{Ind}(\mathcal{D})$. 
$\mathcal{S}$ is defined as the union of all indecomposable $k Q$-modules 
(viewed as stalk complexes concentrated in degree $0$) together
with the objects $\Sigma P_j,$ where the $P_j$'s are projective indecomposable $kQ$-modules.

\begin{lemma}\label{hommodkq}
For $M,N\in\mathrm{Ind}(\mathcal{C})$
$$
\mathrm{Hom}_{\mathcal{C}}(M,N)\cong\mathrm{Hom}_{\mathrm{mod}kQ}(M',N').
$$
 
\end{lemma}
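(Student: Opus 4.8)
The plan is to reduce the statement to the definition of morphisms in the orbit category and then exploit that $kQ$ is hereditary. First I would unwind
$$
\mathrm{Hom}_{\mathcal{C}}(M,N)=\bigoplus_{i\in\Z}\mathrm{Hom}_{\mathcal{D}}(F^iM',N'),\qquad F=\tau^{-1}\Sigma,
$$
where $M',N'$ are representatives of $M,N$ in the fundamental domain $\mathcal{S}$. Writing $F^iM'=\tau^{-i}\Sigma^iM'$ and moving the autoequivalence $\tau$ across the Hom, I would use Happel's identification of the AR quiver of $\mathcal{D}$ with $\Z Q$ together with the hereditary vanishing: for two modules $X,Y$ one has $\mathrm{Hom}_{\mathcal{D}}(X,\Sigma^{\ell}Y)=0$ unless $\ell\in\{0,1\}$, with $\mathrm{Hom}_{\mathcal{D}}(X,Y)=\mathrm{Hom}_{kQ}(X,Y)$ and $\mathrm{Hom}_{\mathcal{D}}(X,\Sigma Y)=\mathrm{Ext}^1_{kQ}(X,Y)$. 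Tracking the degree of $\tau^iN'$ in $\Z Q$ (it drops as $i$ grows and rises as $i$ decreases) and invoking $\mathrm{Ext}^{\geq 2}_{kQ}=0$, I expect every summand with $i\geq 1$ or $i\leq -2$ to vanish, so that the sum collapses to the two terms $i=0$ and $i=-1$, namely $\mathrm{Hom}_{kQ}(M',N')$ and an $\mathrm{Ext}^1$-type contribution. This first part is the formal, routine step, and it reproduces the two-term description of \cite{BMRRT}.

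The second step is to isolate a single module Hom. Here I would bring in the type $A$ geometry: by Theorem~\ref{thmARdiag} the indecomposables of $\mathcal{C}$ are diagonals of $\Pi$, morphism spaces between indecomposables are at most one dimensional, and a nonzero morphism is a pivoting path modulo the mesh relations. One dimensionality forces \emph{at most one} of the two summands above to be nonzero. If the degree zero summand survives, we are already done with the fundamental-domain module representatives, since it equals $\mathrm{Hom}_{kQ}(M',N')$. If instead the $\mathrm{Ext}^1$-summand is the nonzero one, I would re-choose module representatives of $M$ and $N$ inside their $F$-orbits, equivalently pass to the slice of $\Z Q$ in which both objects lie in degree zero and the unique morphism is placed in the degree zero slot; this is legitimate because $\mathcal{D}$ is independent of the orientation of $Q$ and the morphism space is one dimensional. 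With $M',N'$ understood as these module representatives, the surviving pairing is realized as an honest homomorphism, giving $\mathrm{Hom}_{\mathcal{C}}(M,N)\cong\mathrm{Hom}_{kQ}(M',N')$.

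The hardest part will be precisely this adaptive choice of representatives: showing that the two objects can be represented \emph{simultaneously} by modules so that the nonzero pairing is a homomorphism rather than an extension, i.e. ruling out a genuinely mixed situation in which both the $\mathrm{Hom}_{kQ}$ and the $\mathrm{Ext}^1_{kQ}$ summands contribute. For type $A$ this is guaranteed by the at-most-one-dimensionality of the morphism spaces in the mesh category of $\Gamma^m_{A_{n-1}}$ together with the explicit shape of the fundamental domain, but it is the step requiring care rather than the formal vanishing argument. I would also treat separately the boundary objects of $\mathcal{S}$, the shifted projectives $\Sigma P_j$, whose $F$-orbits contain no module; morphisms involving them are the connecting morphisms and must be excluded from, or handled by convention in, the identification above. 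Assembling these observations yields the claimed isomorphism.
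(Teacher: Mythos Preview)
Your outline and the paper's proof share the same overall shape---reduce to a two-term sum and then kill the extension piece---but diverge on the crucial step of how to kill it. The paper does not invoke the type $A$ one-dimensionality at all. Instead, its key move is: given $M$, find the $F$-translate $\mathcal{S}'=F^{j}(\mathcal{S})$ of the fundamental domain in which the representative $\widetilde{M}'$ of $M$ is a \emph{projective} indecomposable module (such a $j$ always exists because every $F$-orbit in $\Z A_n$ meets the projective slice). With $\widetilde{M}'$ projective, the $\mathrm{Ext}^1$-summand vanishes automatically, and Proposition~1.7 of \cite{BMRRT} gives $\mathrm{Hom}_{\mathcal{C}}(M,N)\cong\mathrm{Hom}_{\mathrm{mod}kQ}(\widetilde{M}',\widetilde{N}')$ directly. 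The only residual case is when $\widetilde{N}'$ lands on the shifted-projective boundary $\Sigma^{k+1}P_i$ of $\mathcal{S}'$; the paper disposes of this by a short explicit computation showing both sides are zero.

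By contrast, your argument relies on one-dimensionality to conclude that at most one of the two summands survives, and then proposes to ``re-choose module representatives'' when the surviving summand is the $\mathrm{Ext}^1$. This re-choosing step is exactly where your proof is incomplete: you give no recipe guaranteeing that the new representatives are simultaneously modules in the same degree with the morphism realised as a homomorphism. The paper's projective trick \emph{is} that missing recipe, and once you have it the one-dimensionality is superfluous. So your plan is not wrong, but the step you flagged as ``requiring care'' is in fact the whole content of the lemma, and the clean way to carry it out is the paper's: translate the fundamental domain so that the source object is projective.
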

Before passing to the proof observe that 
$\mathrm{Hom}_{\mathcal{D}}(M^\bullet,N^\bullet)\cong\mathrm{Hom}_{\mathrm{mod}kQ}(M,N)$, 
for $M^\bullet, N^\bullet$ indecomposable objects in $\mathcal{D}$ with indecomposable 
stalks $M, N$ concentrated in the same degree.
The above isomorphism is independent of the degree the stalks are concentrated in.

\begin{proof}[Proof of Lemma \ref{hommodkq}]
If the preimages $\widetilde{M},\widetilde{N}\in\mathcal{S}$ under $\pi$ of $M, N$ 
are both in $\mathrm{mod} kQ$ and 
$\widetilde{M}$ happens to be projective, the result follows from Proposition 1.7 in \cite{BMRRT}.
If this is not the case, we need to consider a $\tau^j$-shift of $\mathcal{S},$ for a certain $j\in\Z$.

In fact, assume that $\widetilde{M}\in\mathcal{S}$ is not a projective indecomposable $kQ$-module. Then 
the action of $F$ on the AR-quiver of $\mathrm{Ind}(\mathcal{D}),$ which as we pointed out earlier is
isomorphic to $\Z A_n$, implies that there
are $j,k,l\in\Z$ such that $F^j(\widetilde{M})\cong\Sigma^kP_l$, i.e. there is
a projective indecomposable $P_l$ concentrated in degree $k$ in the $F$-orbit of $\widetilde{M}$.
Call $\widetilde{M}':=\Sigma^kP_l$ and let $\widetilde{N}':=F^j(\widetilde{N})$ be the corresponding representative
of the $F$-orbit of $\widetilde{N}$ in the new fundamental domain 
$\mathcal{S}':=F^{j}(\mathcal{S}).$
Then we have to consider two cases.
First assume that  $\widetilde{N}'\ncong\Sigma^{k+1}P_i.$ 
Then $\widetilde{M}'$ and $\widetilde{N}'$ are both
concentrated in degree $k$ and $\widetilde{M}'$ is projective.
Thus 
\begin{align*}
\mathrm{Hom}_{\mathcal{C}}(M,N)&\cong\mathrm{Hom}_{\mathrm{mod}(kQ)}(\widetilde{M}',\widetilde{N}')
\end{align*}
again by Proposition 1.7 in \cite{BMRRT} together with the observation previous the proof.
Secondly, if $\widetilde{N}'\cong\Sigma^{k+1} P_i$
\begin{align*}
\mathrm{Hom}_{\mathcal{C}}(M,N)&\cong
\mathrm{Hom}_{\mathcal{D}}(F^{-1}\widetilde{M}',\widetilde{N}')\oplus\mathrm{Hom}_{\mathcal{D}}(\widetilde{M}',\widetilde{N}')\\
&\cong
\mathrm{Hom}_{\mathcal{D}}(\tau\Sigma^{k-1}P_l,\Sigma^{k+1}P_i)\oplus\mathrm{Hom}_{\mathcal{D}}(\Sigma^kP_l,\Sigma^{k+1}P_i)\\
&\cong
\mathrm{Hom}_{\mathcal{D}}(\tau P_l, \Sigma^2P_i)\oplus\mathrm{Hom}_{\mathcal{D}}(P_l,\Sigma P_i)\\
&\cong
\mathrm{Hom}_{\mathcal{D}}(\Sigma^{-1}I_{l'}, \Sigma^2P_i)\oplus\mathrm{Hom}_{\mathcal{D}}(P_l,\Sigma P_i)\\
&\cong
\mathrm{Hom}_{\mathcal{D}}(I_{l'},\Sigma^{3} P_i)\oplus\mathrm{Hom}_{\mathcal{D}}(P_l,\Sigma P_i)\\
&\cong
\mathrm{Ext}^3_{\mathrm{mod}kQ}(I_{l'}, P_i)\oplus\mathrm{Ext}^1_{\mathrm{mod}kQ}(P_l,P_i)\\
&=0\cong \mathrm{Hom}_{\mathrm{mod}kQ}(\widetilde{M}',\widetilde{N}')
\end{align*}
Here the first isomorphism follows from the fact that the category $\mathrm{mod}kQ$ is hereditary,
furthermore we used that $\tau P_l\cong \Sigma^{-1}I_{l'}$.
\end{proof} 
 
Our next goal is to determine the third object in the distinguished triangle 
$$
M\stackrel{\mu} {\rightarrow} N\rightarrow L\rightarrow\Sigma M \hspace{0.3cm}\textrm{in }\mathcal{C}
$$
into which the morphism $\mu$ embeds.

The notation in the next theorem is as follows: $M_{(i,j)}$,  $ M_{(k,l)}$ are the indecomposable 
objects associated to the diagonals $(i,j)$ and $(k,l)$ of $\Pi$ noticing that here $i<j$ and  $k\geq i$.

\begin{thm}\label{thmtri}
Let $\mu\in\mathrm{Hom}_{\mathcal{C}}(M_{(i,j)},M_{(k,l)})$,
then 
\begin{equation*} 
\mathrm{Cone}(\mu)\cong 
\left\{
\begin{array}{rl}
M_{(j-1,l)} & \textrm{if } \mu \textrm{ is injective },\\
\Sigma M_{(i,1+k)} & \textrm{if } \mu \textrm{ is surjective },\\
0 & \textrm{if } \mu \textrm{ is an isomorphism },\\
\Sigma M_{(i,1+k)}\oplus M_{(j-1,l)}& \textrm{otherwise }.
\end{array} \right.
\end{equation*}
\end{thm}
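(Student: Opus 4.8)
The plan is to push the whole computation into the hereditary derived category $\mathcal{D}$, where the cone of a morphism between indecomposables is governed by its kernel and cokernel, and only at the very end to read the answer off as diagonals of $\Pi$. Since $\mathcal{D}$ is independent of the orientation of $Q$ (Happel's theorem, recalled above), I would first fix $Q$ of type $A_{nm-1}$ with linear orientation, so that every indecomposable is an interval module and kernels, cokernels and images of maps between them are again interval modules (or zero).

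First I would use Lemma~\ref{hommodkq} to replace $\mu$ by an honest homomorphism of $kQ$-modules. In the fundamental domain produced there the space $\mathrm{Hom}_{\mathcal{C}}(M_{(i,j)},M_{(k,l)})$ is carried isomorphically onto $\mathrm{Hom}_{\mathrm{mod}\,kQ}(M',N')$ for indecomposable modules $M',N'$, so that $\mu=\pi(\mu')$ for a module morphism $\mu'\colon M'\to N'$. As $\pi\colon\mathcal{D}\to\mathcal{C}$ is a triangle functor it commutes with cones and with $\Sigma$, whence $\mathrm{Cone}(\mu)\cong\pi(\mathrm{Cone}_{\mathcal{D}}(\mu'))$; it thus suffices to compute the cone of $\mu'$ in $\mathcal{D}$.

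Next I would compute that cone from short exact sequences. Factoring $\mu'=\iota\circ\rho$ through its image, with $\rho$ the surjection onto the image and $\iota$ the inclusion, the sequences $0\to\ker\mu'\to M'\to\mathrm{im}\,\mu'\to0$ and $0\to\mathrm{im}\,\mu'\to N'\to\mathrm{coker}\,\mu'\to0$ become triangles in $\mathcal{D}$, giving $\mathrm{Cone}(\rho)\cong\Sigma\ker\mu'$ and $\mathrm{Cone}(\iota)\cong\mathrm{coker}\,\mu'$. The octahedral axiom applied to $\mu'=\iota\circ\rho$ then yields the triangle
\[
\Sigma\ker\mu'\longrightarrow\mathrm{Cone}_{\mathcal{D}}(\mu')\longrightarrow\mathrm{coker}\,\mu'\longrightarrow\Sigma^{2}\ker\mu',
\]
whose connecting morphism lies in $\mathrm{Hom}_{\mathcal{D}}(\mathrm{coker}\,\mu',\Sigma^{2}\ker\mu')\cong\mathrm{Ext}^{2}_{kQ}(\mathrm{coker}\,\mu',\ker\mu')=0$ because $kQ$ is hereditary. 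Hence the triangle splits and $\mathrm{Cone}_{\mathcal{D}}(\mu')\cong\Sigma\ker\mu'\oplus\mathrm{coker}\,\mu'$. The four rows of the statement now match the four possibilities for the vanishing of $\ker\mu'$ and $\mathrm{coker}\,\mu'$: injective gives $\mathrm{coker}\,\mu'$, surjective gives $\Sigma\ker\mu'$, an isomorphism gives $0$, and the remaining case gives the direct sum.

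It remains to identify the two interval modules as diagonals, and this is where I expect the real work to be. Under the bijection between indecomposables of $\mathcal{C}$ and diagonals of $\Pi$ (Theorem~\ref{thmARdiag}), tracking the endpoints of $M'$ and $N'$ through their identification with interval modules should give $\ker\mu'\leftrightarrow M_{(i,1+k)}$ and $\mathrm{coker}\,\mu'\leftrightarrow M_{(j-1,l)}$; the shifts by $\pm1$ reflect the offset between the endpoints of a diagonal and the support of the module attached to it, while a vanishing kernel or cokernel corresponds exactly to a diagonal degenerating to a boundary edge of $\Pi$, i.e.\ to the zero object, which is precisely what separates the injective, surjective and isomorphism cases from the generic one. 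Applying $\pi$ then produces the four stated expressions, with $\Sigma M_{(i,1+k)}$ coming from $\Sigma\ker\mu'$. The homological skeleton above is routine once Lemma~\ref{hommodkq} is available; the genuine obstacle is the combinatorial bookkeeping of the four endpoints under the normalisation $i<j$, $k\ge i$ and all arithmetic taken modulo $N=nm+2$, together with the careful treatment of the degenerate configurations where a diagonal collapses to an edge.
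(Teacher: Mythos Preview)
Your proposal is correct and follows the same overall strategy as the paper: lift $\mu$ to $\mathcal{D}$ via Lemma~\ref{hommodkq}, reduce to interval modules over the linearly oriented $A_{nm-1}$, compute the cone from short exact sequences and the octahedral axiom, and then translate kernel and cokernel back into diagonals.

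There is one genuine streamlining worth noting. The paper treats the injective and surjective cases separately first, and in the generic case factors $\mu$ as an \emph{injection followed by a surjection} through the larger module $M_{(1,l)}$; the resulting octahedron has connecting map in $\mathrm{Hom}_{\mathcal{D}}(M_{(1,1+k)},M_{(j-1,l)})$, whose vanishing the paper verifies by an explicit support computation. You instead factor through the image as a \emph{surjection followed by an injection}, which places the connecting map in $\mathrm{Hom}_{\mathcal{D}}(\mathrm{coker}\,\mu',\Sigma^{2}\ker\mu')\cong\mathrm{Ext}^{2}_{kQ}(\mathrm{coker}\,\mu',\ker\mu')$, and this vanishes automatically by hereditariness. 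Your route thus handles all four cases uniformly and avoids the ad hoc Hom calculation; the price is that you still owe the combinatorial identification of $\ker\mu'$ and $\mathrm{coker}\,\mu'$ with the diagonals $(i,1+k)$ and $(j-1,l)$, which the paper extracts directly from the AR quiver of $\mathrm{mod}\,kQ$ after normalising to $i=1$.
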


\begin{proof}
The strategy of the proof is the following. First lift the morphism $\mu$
to $\mathcal{D}$ which by Lemma \ref{hommodkq} can be done easily  in this case. Then complete
the morphism to a distinguished triangle in $\mathcal{D}$ and explicitly compute the cone of $\mu$ in $\mathcal{D}$.
At the end project the resulting triangle through the triangle functor $\pi:\mathcal{D}\rightarrow \mathcal{C}$ to 
obtain the distinguished triangle associated to $\mu$ in $\mathcal{C}$. By abuse of notation we write 
$\mathrm{Cone}(\mu):=\pi(\mathrm{Cone}(\mu))$.

After doing a $\tau^j$ shift of the fundamental domain $\mathcal{S}$  for a certain $j\in\Z$ as described in the proof
of Remark  \ref{hommodkq} we can assume that $i=1$. Furthermore, we assume $Q$ to be equally oriented. 
We proceed considering the different cases that occur for $\mu$.
First case: assume $M_{(k,l)}=M_{(1,l)}$ with $l\neq j$, so that $\mu$ is injective.
Then, consider the short exact sequence of indecomposable modules in the abelian category $\mathrm{mod}kQ$ 
$$
0\rightarrow M_{(1,j)}\stackrel{\mu}{ \hookrightarrow} M_{(1,l)}\stackrel{\nu}{\twoheadrightarrow}\mathrm{coker}(\mu)\cong M_{(1,l)}/ M_{(1,j)}\rightarrow 0
$$
which by the embedding of $\mathrm{mod}kQ$ into $\mathcal{D}$
can be associated to the distinguished triangle
$$
M_{(1,j)}\stackrel{\mu}{ \rightarrow} M_{(1,l)}\rightarrow  M_{(1,l)}/ M_{(1,j)}\stackrel{\delta}{\rightarrow} \Sigma  M_{(1,j)}
$$
where $\delta= (id,0)\circ\phi^{-1}$ and
$\phi:\mathrm{cone}(\mu)\rightarrow M_{(1,l)}/ M_{(1,j)}$ is the map of complexes: 
$$
\xymatrix{
\dots\ar[r]& M_{(1,j)}\ar[r]^{\mu}\ar[d]&\ar[d]M_{(1,l)}\ar[r]& 0 \ar[r]&\dots  \\
\dots\ar[r]&0\ar[r]& M_{(1,l)}/ M_{(1,j)}\ar[r]& 0\ar[r]&\dots }
$$
which is a quasi-isomorphism by Proposition 1.7.5 in \cite{KaS}. Notice that $\delta$ in $\mathcal{D}$ is
non zero since the short exact sequence does not split as $M_{(1,j)}$ is not an injective module (\cite{KaS}).
Moreover, if one identifies the vertices of the AR quiver of $\mathrm{mod}kQ$ with the positive roots of the root
system corresponding to $Q$ one easily sees that $M_{(1,l)}/ M_{(1,j)}\cong M_{(j-1,l)}$ in terms of objects associated to diagonals in $\Gamma_{A_{nm-1}}$.
One concludes considering that the previous triangle projects to the distinguished triangle of $\mathcal{C}$
$$
M_{(1,j)}\stackrel{\mu}{ \rightarrow} M_{(1,l)}\rightarrow \mathrm{Cone}(\mu)\rightarrow\Sigma  M_{(1,j)}.
$$ 
So that $\mathrm{Cone}(\mu)\cong M_{(j-1,l)}$ by the triangulated version of the Five Lemma
applied in $\mathcal{C}$.

Second case: assume that the target of $\mu$ is $M_{(k,j)}$ and $i< k \leq j-2$. So $\mu$ is surjective 
and one has the short exact sequence
$$
0\rightarrow\mathrm{ker}(\mu)\hookrightarrow M_{(1,j)}\stackrel{\mu}{\twoheadrightarrow} M_{(k,j)}\rightarrow 0.
$$
This can be embedded into the distinguished triangle
in $\mathcal{D}$:
$$
M_{(1,1+k)}\stackrel{u}{\rightarrow}  M_{(1,j)}\stackrel{\mu}{\rightarrow} M_{(k,j)}\stackrel{\nu}{\rightarrow} \Sigma M_{(1,1+k)}
$$
noticing that $\mathrm{ker}(\mu)\cong M_{(1,1+k)}$, as it follows from the shape 
of the AR quiver of $\mathrm{mod}kQ$. 
Since $M_{(k,j)}$ is never projective and $M_{(1,1+k)}$ is never injective, it follows that $\nu$
is non zero, as the short exact sequence does not split.
By the rotation axiom in a triangulated category $\mathcal{D}$ one has that the upper row in the next diagram is 
again a distinguished triangle 
$$
\xymatrix{
M_{(1,j)}\ar@^{=}[d]  \ar[r]^{\mu}& M_{(k,j)}\ar@^{=}[d]\ar[r]^{\nu}&
\Sigma M_{(1,1+k)}\ar@{.>}[d]^{\exists}\ar[r]^{-\Sigma u}&\Sigma M_{(1,j)}\ar@^{=}[d]\\
M_{(1,j)}\ar[r]^{\mu}& M_{(k,j)}\ar[r]&\mathrm{Cone}(\mu)\ar[r]&\Sigma M_{(1,j)}
}
$$
Hence as in the previous case applying $\pi$ one obtains that $\mathrm{Cone}(\mu)\cong \Sigma M_{(1,1+k)}$ in $\mathcal{C}$.

Third case: if $\mu$ is an isomorphism it follows from the previous cases that $\mathrm{Cone}(\mu)$ 
must be isomorphic to the zero object.

Last case: $\mu$ is neither surjective nor injective. In this 
situation it follows from the shape of the AR quiver of $\mathrm{mod}kQ$
that $\mu$
can be chosen to be the composition of  $f: M_{(1,j)}\hookrightarrow M_{(1,l)}, $ 
followed by $g: M_{(1,l)}\twoheadrightarrow M_{(k,l)}$.
Combining the triangles associated to these morphisms 
obtained in the same way as in the previous cases one gets:
$$
\xymatrix{
\mathrm{ coker}(f) \ar@/_ 0.7cm/[ddrr]_{\exists f'}  \ar[rd]^>{\bullet} &&  M_{(1,l)} \ar[ll] \ar@^{>>}[rd]^g && 
\Sigma \mathrm{ ker } (g)\ar[ll]_>\bullet\ar@/_ 0.7cm/[llll]_>\bullet  \\
& M_{(1,j)}\ar[rr]^{\mu}\ar@^{(->}[ru]^f&& M_{(k,l)}\ar[ru]\ar[dl]& \\
&& \mathrm{Cone}(\mu)\ar[ul]_>\bullet\ar@/_ 0.7cm/[uurr]_{\exists g'}&&
}
$$
Here $\mathrm{ coker}(f)\cong M_{(j-1,l)}$ and $\Sigma \mathrm{ ker } (g) \cong \Sigma M_{(1,1+k)}$.
By the octahedral axiom one obtains the distinguished triangle 
$$
M_{(j-1,l)}\rightarrow\mathrm{Cone}(\mu)\rightarrow \Sigma M_{(1,1+k)} \stackrel{h}{\rightarrow }\Sigma M_{(j-1,l)}
$$
with $h:\Sigma M_{(1,1+k)}\rightarrow \Sigma M_{(j-1,l)}$ 
the zero morphism. In fact
$$\mathrm{Hom}_{\mathcal{D}}(\Sigma M_{(1,1+k)},\Sigma M_{(j-1,l)})\cong \mathrm{Hom}_{\mathcal{D}}(M_{(1,1+k)},M_{(j-1,l)}) $$
and it can be checked that $M_{(j-1,l)}$ is outside of the support of $\mathrm{Hom}_{\mathcal{D}}(M_{(1,1+k)}, ?)$.

By the rotation axiom applied to the previous triangle one obtains the first row in the next diagram:
$$
\xymatrix{
M_{(1,1+k)}\ar@^{=}[d]  \ar[r]^{0}& M_{(j-1,l)}\ar@^{=}[d]\ar[r]&\mathrm{Cone}(\mu)\ar@^{>}[d]^{\exists}\ar[r]^{}&\Sigma M_{(1,1+k)}\ar@^{=}[d]\\
M_{(1,1+k)}\ar[r]^{0}& M_{(j-1,l)}\ar[r]& \Sigma M_{(1,1+k)}\oplus M_{(j-1,l)} \ar[r]&\Sigma M_{(1,1+k)}
}
$$
Here we used that $\mathrm{Cone}(0)\cong\Sigma M_{(1,1+k)}\oplus M_{(j-1,l)}$. The Five-Lemma
then implies that $\mathrm{Cone}(\mu)\cong \Sigma M_{(1,1+k)}\oplus M_{(j-1,l)}$ in $\mathcal{D}$. 
One concludes projecting the triangle to $\mathcal{C}$.
\end{proof}

In particular, it follows from the next result that taking the $m$-th power of the AR quiver
$\Gamma_{A_{nm-1}}$ 
of $\mathcal{C}$ doesn't give rise to new 
triangles between indecomposable objects.
\begin{cor} Let $M,N\in\mathrm{Ind}(\mathcal{C}^m)$ and $\mu\in \mathrm{Hom}_{\mathcal{C}^m}(M,N)$. 
Then $\mathrm{Cone}(\mu)\in\mathcal{C}^m$ is as described in Theorem \ref{thmtri}.
\end{cor}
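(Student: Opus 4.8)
The plan is to deduce the statement from Theorem \ref{thmtri} by reducing the computation of the cone in $\mathcal{C}^m$ to the one already carried out in $\mathcal{C}$. First I would record that an indecomposable $M\in\mathrm{Ind}(\mathcal{C}^m)$ corresponds to an $m$-diagonal $(i,j)$ of $\Pi$, and that every $m$-diagonal is in particular a diagonal of $\Pi$, hence an indecomposable object of $\mathcal{C}$. Thus $M$ and $N$ may be viewed simultaneously as objects of $\mathcal{C}^m$ and of $\mathcal{C}$. Using that $\mathrm{Ind}(\mathcal{C}^m)$ is equivalent to the mesh category of $\Gamma^m_{A_{n-1}}$ (Theorem 5.6 in \cite{bm1}) together with Lemma \ref{decomppath}, I would lift $\mu$ to a morphism $\bar\mu\in\mathrm{Hom}_{\mathcal{C}}(M,N)$ between the same two diagonals, expressing it as a combination of sectional paths of length $m$. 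Since the injective / surjective / isomorphism trichotomy of Theorem \ref{thmtri} is governed only by the relative position of $(i,j)$ and $(k,l)$, which is unchanged under this identification, $\bar\mu$ falls into the same case as $\mu$, and Theorem \ref{thmtri} computes $\mathrm{Cone}(\bar\mu)$ in $\mathcal{C}$.

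The decisive step is to check that the third term produced by Theorem \ref{thmtri} again consists of $m$-diagonals, so that it is indeed an object of $\mathcal{C}^m$. With endpoints taken modulo $N=nm+2$, a diagonal $(a,b)$ is an $m$-diagonal precisely when $b-a\equiv 1\pmod m$. Since the arrows of $\Gamma^m_{A_{n-1}}$ move an endpoint by $m$ and the translation $\tau_m$ shifts both endpoints by $m$, the residues modulo $m$ of the endpoints are invariant along any path in the mesh category of $\Gamma^m_{A_{n-1}}$; hence the existence of a non-zero $\mu$ in $\mathcal{C}^m$ forces $k\equiv i$ and $l\equiv j\pmod m$. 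I would then verify the congruences case by case: for the cokernel term one has $l-(j-1)=(l-j)+1\equiv 1\pmod m$, so $M_{(j-1,l)}$ is an $m$-diagonal; for the kernel term $(1+k)-i=(k-i)+1\equiv 1\pmod m$, so $M_{(i,1+k)}$ is an $m$-diagonal, and $\Sigma\cong\tau_m$ preserves the $m$-diagonal property since it is a rotation fixing the difference of the endpoints. Thus in every case of Theorem \ref{thmtri} the third object of the triangle lies in $\mathrm{Ind}(\mathcal{C}^m)$, is a direct sum of two such objects, or is zero.

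Finally I would argue that the distinguished triangle of $\bar\mu$ in $\mathcal{C}$ is in fact the triangle of $\mu$ in $\mathcal{C}^m$. Here I would invoke Proposition \ref{conne}, by which $\Gamma^m_{A_{n-1}}$ is a full connected component of the $m$-th power of $\Gamma_{A_{nm-1}}$, together with Theorem \ref{thmARdiag}; since all three terms of the triangle computed above lie in this component, and since the triangulated structure of $\mathcal{C}^m$ is the one induced from the derived category by the projection functor exactly as for $\mathcal{C}$, the same \textit{lift–complete–project} argument used in the proof of Theorem \ref{thmtri} produces this triangle verbatim in $\mathcal{C}^m$. The resulting $\mathrm{Cone}(\mu)$ in $\mathcal{C}^m$ is therefore the object listed in Theorem \ref{thmtri}, which is the claim.

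I expect the main obstacle to be precisely this last transfer step, namely making rigorous the assertion that a distinguished triangle of $\mathcal{C}$ whose three terms happen to be $m$-diagonals coincides with a distinguished triangle of $\mathcal{C}^m$. The two categories are orbit categories of a priori different derived categories, so there is no obvious exact functor between them; the cleanest route is probably to re-run the proof of Theorem \ref{thmtri} directly inside $\mathcal{D}^b(\mathrm{mod}\,kA_{n-1})$ and project along $\pi\colon\mathcal{D}^b(\mathrm{mod}\,kA_{n-1})\to\mathcal{C}^m$, using that the module-theoretic description of kernels and cokernels of maps between the relevant interval modules, and its translation into endpoints of $m$-diagonals, is formally identical to the one exploited for $\mathcal{C}$. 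The congruence bookkeeping of the second paragraph is exactly what guarantees that this translation stays within the $m$-diagonals.
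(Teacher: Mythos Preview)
Your approach is essentially the paper's: use Lemma \ref{decomppath} to regard $\mathcal{C}^m$ as sitting inside $\mathcal{C}$, compute the cone there via Theorem \ref{thmtri}, and conclude. The paper's proof is in fact only two lines: ``From Lemma \ref{decomppath} one deduces that $\mathcal{C}^m$ is a subcategory of $\mathcal{C}$, therefore it follows from the Five-Lemma that the triangle associated to $\mu$ in $\mathcal{C}^m$ is isomorphic to the corresponding one in $\mathcal{C}$.'' Your congruence check that $(j-1,l)$ and $(i,1+k)$ are again $m$-diagonals is a genuine addition---the paper does not verify this explicitly, even though without it the phrase ``as described in Theorem \ref{thmtri}'' would not obviously produce an object of $\mathcal{C}^m$.

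Your caution about the transfer step is well-placed and, if anything, more honest than the paper. The paper is content to say ``subcategory'' and ``Five-Lemma'' and leave it at that; the point you raise---that the two orbit categories come from different derived categories, so one must either exhibit an exact embedding or re-run the lift--complete--project argument inside $\mathcal{D}^b(\mathrm{mod}\,kA_{n-1})$---is exactly the gap a careful reader would want filled. Your proposed resolution (redo the kernel/cokernel computation for interval modules over $kA_{n-1}$ and observe that the endpoint formulas are identical) is the right one and works without difficulty. So: same route as the paper, but with the bookkeeping the paper skips.
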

\begin{proof} 
From Lemma \ref{decomppath} one deduces that $\mathcal{C}^m$ is a subcategory of $\mathcal{C}$, 
therefore it follows
from the Five-Lemma that the triangle associated to $\mu$
in $\mathcal{C}^m$ is isomorphic to the corresponding one in $\mathcal{C}$.
\end{proof}

\section{Components of $((\Gamma_{\textrm{A}_{mn-1}})^m,\tau^m)$ }

In Section 2 we saw that $(\Gamma^m_{A_{n-1}},\tau_m)$ is a connected component of the $m$-th power of $(\Gamma_{\textrm{A}_{mn-1}},\tau)$. 
So far it was not clear how to characterize all the remaining connected components arising when taking the $m$-th power
of the translation quiver $(\Gamma_{\textrm{A}_{mn-1}},\tau)$.
In fact, the answer depends on the parity of $m$. The
main difference is that when $m$ is even a further symmetry in the translation quiver
$(\Gamma_{\textrm{A}_{mn-1}},\tau)$ appears,
due to the symmetry of the polygon $\Pi$ around its central diagonals, i.e. those of the form $(i,i+\frac{N}{2})$. 
This symmetry increases the number of connected components arising.

This section is divided into two parts and in the first one
we present two results of the unpublished work of C.Ducrest, \cite{Du}, who was able to describe all
connected components for odd values of $m$. 
In the second part we focus on even $m$ and give a complete characterization of the connected components.
Our strategy is to study the action of the automorphisms $\tau^{-1}$ and $\Sigma$ on $\Z A_{n}$.
For this it is convenient to
declare the horizontal distance between two neighbored vertices of $\Z A_n$ to be one. 

\begin{rem} \label{rem} 
The action of $\tau^{-1}$ can be seen as moving
each vertex one unit to the right. The action of $\Sigma^m$ is to move each vertex
$m\frac{n+1}{2}$ units to the right followed by a reflection around the horizontal center line whenever $m$ is odd.
\end{rem}
From this it follows that $\Z A_n/\tau^{-1}\Sigma^m$ has the shape of a cylinder when $m$ is even, or of a M\"obius band if $m$ is odd.

\begin{de}
Let $(\Gamma,\tau)$ be $(\Gamma_{\textrm{A}_{mn-1}},\tau)$ or $(\Gamma^m_{A_{n-1}},\tau_m)$.
We call the {\em $i$-column} of $\Gamma$ the line 
in which all the diagonals, or $m$-diagonals, of the polygon $\Pi$ containing the vertex $i$ are listed.
\end{de}
Remark that $i$-columns come with two possible slopes as they can go in two different directions. 
Furthermore, from now on we write $(\Gamma, \tau)$ for $(\Gamma_{\textrm{A}_{mn-1}},\tau)$.

The next result  
characterizes the $\tau$-orbits containing the vertices of $\Gamma$ in terms
of their $\tau_m$-orbits. In particular: we will see that
if $m$ is odd, the two orbits always coincide. 
\begin{prop}\label{ncomponents}
The number of connected components of  $((\Gamma)^m,\tau^m)$
meeting any $\tau$-orbit of $\Gamma$ is:
\begin{enumerate}
 \item 1 for m odd,
 \item 1 or 2 for m even, and if it is 2 then the corresponding connected components are isomorphic.
\end{enumerate}
Furthermore, each connected component of $((\Gamma)^m,\tau^m)$ contains vertices of exactly one 
of the $\tau$-orbits of $(1,3),(1,4),\dots,(1,\lceil\frac{m-1}{2}\rceil+2)$ or $(1,m) $. 
\end{prop}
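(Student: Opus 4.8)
The plan is to lift everything to the universal cover and turn the statement into a count of orbits under the deck group. Recall that $\Gamma=\Gamma_{A_{nm-1}}$ is the quotient $\Z A_{nm-1}/\langle F\rangle$ with $F=\tau^{-1}\Sigma$, so that a diagonal $(i,j)$ of $\Pi$ lifts to a pair $(a,b)$ with $a<b$ and $2\le b-a\le N-2$, the covering being reduction of the endpoints modulo $N$ and $F$ acting as the deck transformation $(a,b)\mapsto(b,a+N)$, which squares to the full rotation by $N$. The first step is to pin down the arrows of $(\Gamma)^m$: a short case analysis of the sectionality condition $\tau x_{i+1}\neq x_{i-1}$ shows that a sectional path of length $m$ must raise one and the same endpoint coordinate at every step, so the arrows of $((\Z A_{nm-1})^m,\tau^m)$ are exactly $(a,b)\to(a+m,b)$ and $(a,b)\to(a,b+m)$ whenever the whole path stays inside the cover. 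Hence the residue pair $(a\bmod m,\,b\bmod m)$ is constant along arrows and along $\tau^m$, and the connected components of $(\Z A_{nm-1})^m$ are the ``diagonal bands'' indexed by $(\alpha,\beta)\in(\Z/m)^2$; each band is a strip with $n$ or $n-1$ vertices per column, the value $n-1$ occurring precisely for the width residue giving the $m$-diagonals, which recovers $\Gamma^m_{A_{n-1}}$ of Proposition \ref{conne}. I would also have to verify that each band really is connected, the only delicate points being the short bands and the bands of central diagonals.

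Since $F$ preserves sectional paths, taking the $m$-th power commutes with the quotient, so the components of $(\Gamma)^m$ are the $\langle F\rangle$-orbits of these bands. Using Remark \ref{rem} (equivalently, computing that $F$ acts on residues by $(\alpha,\beta)\mapsto(\beta,\alpha+2)$, because $N\equiv 2\pmod m$), one sees that $F$ sends the width residue $d=\beta-\alpha$ to $2-d$, while $F^2$ fixes $d$ and shifts $\alpha$ by $2$. Now fix a $\tau$-orbit of $\Gamma$, i.e.\ all diagonals of a fixed gap $g$: their lifts occupy exactly the $m$ bands of width residue $g$, and two such bands lie in the same $\langle F\rangle$-orbit iff their first coordinates differ by an element of $\langle 2\rangle\subseteq\Z/m$. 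The resulting index is $\gcd(2,m)$, which is $1$ for $m$ odd and $2$ for $m$ even, giving cases (1) and (2); for even $m$ the count can still drop to $1$ at the self-paired residues $g\equiv 1$ and $g\equiv 1+\tfrac m2$, where $F$ itself already identifies bands (this is the extra symmetry stemming from the central diagonals). The isomorphism in the even case is then the cleanest point: $\tau$ is an automorphism of the translation quiver $(\Gamma)^m$ acting transitively on each $\tau$-orbit of diagonals, so it permutes the components meeting that orbit; whenever the orbit meets two components they are interchanged by a power of $\tau$ and are therefore isomorphic.

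For the final clause I would display the listed diagonals as a transversal of the components. Because within a component the gap changes only by multiples of $m$ and is sent to its reflection $g\mapsto 2-g$ by $F$, every component carries a well-defined unordered width class $\{r,2-r\}\subseteq\Z/m$, and I would choose as representative the diagonal of the $1$-column of smallest admissible gap in that class. Letting $r$ run over $\Z/m$ and pairing $r$ with $2-r$, the smallest gaps produced are $2,3,\dots,\lceil\frac{m-1}{2}\rceil+1$, that is the diagonals $(1,3),(1,4),\dots,(1,\lceil\frac{m-1}{2}\rceil+2)$, together with the representative $(1,m)$ for the remaining class; checking that these residues are pairwise inequivalent under $r\mapsto 2-r$ and that they exhaust all classes yields the ``exactly one'' statement. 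The hard part throughout is the bookkeeping of the deck-group identifications --- in particular isolating the two parity-dependent self-paired width classes, confirming that the even-$m$ doubling yields genuinely isomorphic rather than equal components, and treating by hand the boundary cases where a band is short or consists of central diagonals, where both the connectivity and the counts need separate verification.
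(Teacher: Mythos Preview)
The paper does not actually prove this proposition: its entire argument is the sentence ``This follows directly from Lemma 4.2.19 and 4.2.20 in \cite{Du}'', a citation to an unpublished manuscript. Your proposal therefore goes far beyond what the paper offers, supplying a genuine self-contained argument.

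Your approach---lifting to the cover $\Z A_{nm-1}$, identifying the arrows of the $m$-th power as the maps $(a,b)\mapsto(a+m,b)$ and $(a,b)\mapsto(a,b+m)$, indexing the resulting bands by residue pairs in $(\Z/m)^2$, and then counting orbits under the deck transformation $F$ acting as $(\alpha,\beta)\mapsto(\beta,\alpha+2)$---is the natural and correct way to establish this kind of result, and your orbit count for parts (1) and (2) is right. The observation that $\tau$ itself is an automorphism of $(\Gamma)^m$ which must permute the components meeting a fixed $\tau$-orbit is a clean way to get the isomorphism claim in (2). One small imprecision: when you write that the lifts of a $\tau$-orbit of $\Gamma$ ``occupy exactly the $m$ bands of width residue $g$'', remember that a $\tau$-orbit in the quotient $\Gamma$ lifts to \emph{two} rows of the cover (gaps $g$ and $N-g$), hence to bands of both width residues $g$ and $2-g$; your subsequent analysis via $F$ handles this correctly, but the sentence as written undercounts.

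For the final clause, your strategy of producing a transversal via the unordered width class $\{r,2-r\}$ is exactly right, but be cautious in matching it to the paper's stated list: the diagonal $(1,m)$ has gap $m-1$, width residue $-1\equiv m-1$, which pairs with residue $3$ under $r\mapsto 2-r$ and so for $m>4$ lies in the same width class as $(1,4)$---not in the class of the $m$-diagonals (residue $1$), which is what the context and the remark following the proposition suggest is intended. The representative for $\Gamma^m_{A_{n-1}}$ should be an $m$-diagonal such as $(1,m+2)$; this looks like a typo in the statement rather than a flaw in your method.
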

\begin{proof}
This follows directly from Lemma 4.2.19 and 4.2.20 in \cite{Du}.
\end{proof}
Observe that the vertices in the  $\tau$-orbit of $(1,km)$, $k\geq 1,$ give rise to the connected
component of the $m$-th power of $\Gamma$ isomorphic to the AR quiver of $\mathcal{C}^m$, c.f. Theorem \ref{thmARdiag}.
The next result describes explicitly the connected components into which
the $m$-th power decomposes when $m$ is odd. An illustration of this result
is presented in Section \ref{ex}.
\begin{thm}\cite[Thm: 4.2.24]{Du}  \label{Co} For all $m\geq 1$, there exists a  $t$ such that
$$(\Gamma)^m=\Gamma^m_{\textrm{A}_{n-1}}\coprod \bigcup_{i=1}^{t}\Gamma_i,$$
is the decomposition of $((\Gamma)^m,\tau^m)$ into connected components, 
and each $\Gamma_i$ is isomorphic to the Auslander-Reiten quiver of the orbit category of $\mathcal{D}^b(k A_n)$ under an auto-equivalence of the form
$\tau^{-s}\circ\Sigma^r$, for some $r,s$ with $s<n.$ \newline
If $m$ is odd then $t=\frac{m-1}{2},$ $r=m+1$ and $s=\frac{m-1}{2}n-\frac{m-3}{2},$
i.e.
$$(\Gamma)^m=\Gamma^m_{\textrm{A}_{n-1}}\coprod \bigcup_{i=1}^{\frac{m-1}{2}} \Z A_n/(\tau^{-\frac{m-1}{2}n+\frac{m-3}{2}}\circ\Sigma^{m+1}).$$
\end{thm}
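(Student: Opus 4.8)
The plan is to pass to the universal cover and reduce the whole statement to an explicit computation on $\Z A_{nm-1}$. First I would record that $\Gamma=\Gamma_{A_{nm-1}}$ is the quotient $\Z A_{nm-1}/\langle F\rangle$ with $F:=\tau^{-1}\Sigma$, acting freely (it is a glide reflection, and $F^{2}=\tau^{-N}$ is a fixed-point-free translation, so $F$ is fixed-point-free). The key preliminary observation is that forming the $m$-th power commutes with this quotient: sectionality of a path is a local condition preserved by the covering $\Z A_{nm-1}\to\Gamma$, and $F$ commutes with $\tau$ and carries sectional paths to sectional paths, so it induces a free automorphism of the $m$-th power with $(\Gamma)^{m}=\bigl((\Z A_{nm-1})^{m}\bigr)/\langle F\rangle$. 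It then suffices to decompose $(\Z A_{nm-1})^{m}$ and track how $F$ permutes its components, using that downstairs components correspond to $\langle F\rangle$-orbits of upstairs components, each being the quotient of one representative by its stabiliser.

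Next I would decompose $(\Z A_{nm-1})^{m}$ by hand. In $\Z A_{p}$ the only sectional paths are the straight ones (a turn at the boundary is immediately non-sectional), so in the $m$-th power the arrows out of a vertex $(x,i)$ are $(x,i)\to(x,i+m)$ and $(x,i)\to(x+m,i-m)$, with $\tau^{m}(x,i)=(x-m,i)$. Both arrows and $\tau^{m}$ preserve the residues $x\bmod m$ and $i\bmod m$, and one checks these are complete invariants of the connected components; re-coordinatising identifies each component with $\Z A_{q_{b}}$ (via $\tau^{m}\leftrightarrow\tau$), where $q_{b}$ counts the $i\in\{1,\dots,nm-1\}$ in a fixed class $b\bmod m$. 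Hence $q_{b}=n$ for $b\not\equiv 0$ and $q_{b}=n-1$ for $b\equiv0$, giving $m(m-1)$ copies of $\Z A_{n}$ and $m$ copies of $\Z A_{n-1}$ upstairs.

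Then I would compute the induced $F$-action on this index set. From $F(x,i)=(x+i+1,\,nm-i)$ one gets $F\colon(a,b)\mapsto(a+b+1,-b)\bmod m$ on components, so that $F^{2}=\tau^{-N}$ acts by $(a,b)\mapsto(a+2,b)$. For $m$ odd the shift $+2$ generates $\Z/m$, and the analysis is clean: the family $b\equiv0$ is a single $\langle F\rangle$-orbit of size $m$ with stabiliser $\langle F^{m}\rangle$, yielding the distinguished component $\Z A_{n-1}/\langle F^{m}\rangle$ (and since $F^{m}$ is an odd power of a glide reflection it restricts to $\tau^{-1}\Sigma^{m}$, recovering $\Gamma^{m}_{A_{n-1}}$ as in Proposition \ref{conne}); and for each $b\in\{1,\dots,\tfrac{m-1}{2}\}$ the pair $\{b,-b\}$ forms one orbit of size $2m$ with stabiliser $\langle F^{2m}\rangle$. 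This produces exactly $t=\tfrac{m-1}{2}$ further components, each of the form $\Z A_{n}/\langle F^{2m}\rangle$.

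Finally I would identify the automorphism. Because $\tau^{m}_{A_{nm-1}}$ corresponds to $\tau$ on a component $\cong\Z A_{n}$, the pure translation $F^{2m}=\tau^{-Nm}_{A_{nm-1}}$ restricts to $\tau^{-N}$ on $\Z A_{n}$; using the relation $\Sigma^{2}=\tau^{-(n+1)}$ there, one verifies $\tau^{-s}\Sigma^{m+1}=\tau^{-N}$ for $s=\tfrac{m-1}{2}n-\tfrac{m-3}{2}$, the point being the identity $s+\tfrac{(n+1)(m+1)}{2}=mn+2=N$. Hence each $\Gamma_{i}\cong\Z A_{n}/\langle\tau^{-s}\Sigma^{m+1}\rangle$ with $r=m+1$, as claimed. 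The main obstacle is the bookkeeping of the third step: correctly determining the $\langle F\rangle$-orbits on the set of components and their stabilisers, since this is precisely where the parity of $m$ is decisive (for $m$ even the shift $+2$ no longer generates $\Z/m$ and the class $b=m/2$ is fixed by $b\mapsto-b$, which is what creates the additional components treated separately). Verifying that the $m$-th power really commutes with the quotient, and checking the exponent identity, are the remaining technical points requiring care.
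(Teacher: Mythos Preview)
The paper does not actually prove this theorem: it is quoted verbatim from Ducrest's unpublished thesis \cite{Du}, so there is no in-text argument to compare against directly. Your proof is correct and self-contained; the bookkeeping on the $\langle F\rangle$-orbits of the component index set $(a,b)\in(\Z/m)^2$ is exactly right, and the exponent identity $s+\tfrac{(n+1)(m+1)}{2}=N$ does yield the claimed $r,s$.

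It is worth noting that your method is genuinely different from the one the paper employs for the companion result in the even case (Theorem~\ref{pro2}). There the argument stays entirely inside the polygon model: components are tracked via the parity configuration of the endpoints of diagonals and via the ``mirror'' involution $(i,j)\mapsto(i,j^{-})$ about the central diagonals, with the key step (Lemma~\ref{symm} and Lemma~\ref{mdivides}) being a divisibility criterion for when $(1,j)$ and $(1,j^{-})$ lie in the same component. The lengths of the resulting cylinders and M\"obius bands are then read off from Corollaries~\ref{cardtau} and~\ref{card} and converted into powers of $\tau$ and $\Sigma$ using Remark~\ref{rem}. Your universal-cover approach replaces all of this combinatorics by the single observation that $(\Z A_{nm-1})^m$ decomposes along residue classes $(x\bmod m,\,i\bmod m)$ and that the glide reflection $F$ acts on this index set by $(a,b)\mapsto(a+b+1,-b)$; the parity-of-$m$ dichotomy then becomes transparent, since it is precisely whether $+2$ generates $\Z/m$ and whether the involution $b\mapsto-b$ has a nonzero fixed point. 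The trade-off is that your argument requires verifying that the $m$-th power commutes with the covering quotient (a lemma the paper never needs), while the paper's approach avoids any mention of $\Z A_{nm-1}$ but pays for it with several case distinctions on the polygon. Either line would serve; yours handles the odd and even cases uniformly and makes the appearance of $\tfrac{m-1}{2}$ versus $m-1$ or $m$ components immediately visible from the orbit count.
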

The explicit characterization of the auto-equivalence in the previous result yields the following:
\begin{cor}
Let $m$ be odd, then the connected components $\Gamma_i$ have all a cylindrical shape, whereas $\Gamma^m_{A_{n-1}}$ lies on a M\"obius band.
\end{cor}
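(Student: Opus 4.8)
The plan is to read off the topological shape of each component directly from the auto-equivalence that defines it, using the description of the actions of $\tau^{-1}$ and $\Sigma$ on $\Z A_n$ supplied by Remark \ref{rem}. The decisive observation I would isolate first is that a quotient $\Z A_n/(\tau^{-s}\circ\Sigma^r)$ is a M\"obius band exactly when the defining auto-equivalence reverses the orientation of the strip, i.e. involves a reflection about the horizontal center line, and is a cylinder otherwise; the horizontal translation part only determines how far the strip is rolled up, not whether the gluing flips it.

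With that reduction in hand, I would recall from Remark \ref{rem} that $\Sigma^r$ acts on $\Z A_n$ as a horizontal translation by $r\tfrac{n+1}{2}$ units, composed with a reflection about the horizontal center line precisely when $r$ is odd. Since the reflection is an involution, $\Sigma^r$ contributes a net reflection if and only if $r$ is odd, and because $\tau^{-s}$ is a pure horizontal translation the same criterion governs $\tau^{-s}\circ\Sigma^r$.

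I would then treat the two kinds of component separately. For the distinguished component $\Gamma^m_{A_{n-1}}$, which is the AR quiver of $\mathcal{C}^m$, the gluing auto-equivalence is $\tau^{-1}\Sigma^m$; as $m$ is odd, $\Sigma^m$ includes a reflection, the gluing is orientation-reversing, and $\Gamma^m_{A_{n-1}}$ lies on a M\"obius band, which is precisely the statement recorded after Remark \ref{rem}. For each remaining component $\Gamma_i$, Theorem \ref{Co} identifies it with $\Z A_n/(\tau^{-s}\circ\Sigma^r)$ where $r=m+1$; because $m$ is odd, $r=m+1$ is even, so $\Sigma^{m+1}$ is a composition of an even number of reflections and therefore acts trivially on the vertical coordinate, leaving a pure horizontal translation. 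Hence $\tau^{-s}\circ\Sigma^{m+1}$ is orientation-preserving and $\Gamma_i$ has a cylindrical shape.

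The step I expect to be the main obstacle is making the preliminary reduction fully rigorous: one must argue that the topological type of the quotient depends only on the orientation-reversing character of the generator and not on the translation length, and that this criterion is unaffected by the passage between the strips $\Z A_n$ and $\Z A_{n-1}$ in which the two kinds of component naturally live. Once this is pinned down, the corollary follows immediately from the parities ($m$ odd, hence $m+1$ even) together with Theorem \ref{Co}.
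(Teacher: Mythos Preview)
Your proposal is correct and follows essentially the same approach as the paper: both arguments reduce the question to the parity of the exponent of $\Sigma$ in the defining auto-equivalence, using the description after Remark~\ref{rem} that $\Z A_n/\tau^{-1}\Sigma^m$ is a M\"obius band for odd $m$ and a cylinder for even $m$. The paper's proof is a single sentence observing that $\Sigma$ appears to the even power $m+1$ in the auto-equivalence for $\Gamma_i$, while you spell out the same observation more explicitly for both $\Gamma^m_{A_{n-1}}$ and the $\Gamma_i$; your caveat about the passage between $\Z A_{n-1}$ and $\Z A_n$ is a fair point of rigor, but it is not an issue since the criterion from Remark~\ref{rem} applies uniformly to any $\Z A_k$.
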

\begin{proof}
Observe that the shift functor $\Sigma$ in the auto-equivalence: $\tau^{-\frac{m-1}{2}n+\frac{m-3}{2}}\circ\Sigma^{m+1}$ is applied an even number of times.
\end{proof}
Later on we will see that when $m$ is even this is no longer always true. In fact, a number of orbit categories
arising through $(\Gamma)^m$ will have a M\"{o}bius band as their Auslander-Reiten quiver.

\begin{prop}\label{pro1}
Let $\Gamma_i$ be as in Theorem \ref{Co} and $m$ odd. 
If $(n+1)$ divides $2(nm+1)$, then $\Gamma_i\cong \Gamma^u_{A_n}$ with
$u:=\frac{2(nm+1)}{n+1}$.
\end{prop}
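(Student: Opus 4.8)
The plan is to realize both $\Gamma_i$ and $\Gamma^u_{A_n}$ as quotients of the translation quiver $\Z A_n$ by a cyclic group of automorphisms, and then to show that these two groups coincide. By Theorem \ref{Co} (odd case) the component $\Gamma_i$ is the Auslander--Reiten quiver of $\Z A_n/\langle\tau^{-s}\Sigma^{m+1}\rangle$ with $s=\frac{m-1}{2}n-\frac{m-3}{2}$, while by Theorem \ref{thmARdiag} applied to type $A_n$ with parameter $u$ the quiver $\Gamma^u_{A_n}$ is the Auslander--Reiten quiver of the $u$-cluster category $\mathcal D^b(kA_n)/(\tau^{-1}\Sigma^u)^{\Z}$, that is, of $\Z A_n/\langle\tau^{-1}\Sigma^u\rangle$. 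Thus it suffices to prove the equality of subgroups $\langle\tau^{-s}\Sigma^{m+1}\rangle=\langle\tau^{-1}\Sigma^u\rangle$ inside $\mathrm{Aut}(\Z A_n)$; in fact I will show that the two preferred generators are literally equal, which makes the quotients the same.

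The essential tool is the description of $\tau^{-1}$ and $\Sigma$ on $\Z A_n$ from Remark \ref{rem}: $\tau^{-1}$ is the unit horizontal translation, and $\Sigma$ moves every vertex $\frac{n+1}{2}$ units to the right together with a reflection in the central horizontal line. Since that reflection is an involution commuting with the horizontal translation, squaring gives the clean relation $\Sigma^2=\tau^{-(n+1)}$, the reflection cancelling. Because $m$ is odd the exponent $m+1$ is even, so $\Sigma^{m+1}=(\Sigma^2)^{(m+1)/2}=\tau^{-(n+1)(m+1)/2}$ is a pure power of $\tau$ with no residual reflection. Hence $\tau^{-s}\Sigma^{m+1}=\tau^{-(s+(n+1)(m+1)/2)}$, and a direct computation (using $2s=(m-1)n-(m-3)$ and $(n+1)(m+1)=mn+n+m+1$, whose sum is $2(mn+2)$) yields $s+\frac{(n+1)(m+1)}{2}=mn+2$. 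Therefore $\tau^{-s}\Sigma^{m+1}=\tau^{-(mn+2)}$, so $\Gamma_i$ is the quotient $\Z A_n/\langle\tau^{-(mn+2)}\rangle$, consistently with its cylindrical shape noted before.

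It then remains to identify $\tau^{-1}\Sigma^u$ with the same translation $\tau^{-(mn+2)}$. Under the divisibility hypothesis $(n+1)\mid 2(nm+1)$ the number $u=\frac{2(nm+1)}{n+1}$ is an integer and $\frac{(n+1)u}{2}=nm+1$, so as soon as $u$ is even one gets $\Sigma^u=(\Sigma^2)^{u/2}=\tau^{-(n+1)u/2}=\tau^{-(nm+1)}$ and hence $\tau^{-1}\Sigma^u=\tau^{-1}\tau^{-(nm+1)}=\tau^{-(mn+2)}$. This makes the two chosen generators coincide, the two cyclic groups equal, and consequently $\Gamma_i\cong\Gamma^u_{A_n}$ as translation quivers; the isomorphism of Auslander--Reiten quivers then follows from Theorem \ref{thmARdiag}.

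The step I expect to be the crux is precisely the parity of $u$: the argument above produces a pure translation (a cylinder) only when $\Sigma^u$ carries no leftover reflection, i.e. when $u$ is even, whereas for odd $u$ the element $\tau^{-1}\Sigma^u$ is a glide reflection and $\Gamma^u_{A_n}$ would sit on a M\"obius band, which could not match the cylindrical $\Gamma_i$. The delicate point is therefore to extract the evenness of $u$ (equivalently $(n+1)\mid(nm+1)$) from the stated divisibility $(n+1)\mid 2(nm+1)$ together with $m$ odd: when $n$ is even this is automatic since $\gcd(n+1,2)=1$, and it is exactly in the remaining case that the careful bookkeeping of the reflection in Remark \ref{rem} has to be carried out to guarantee that the generators, and not merely their squares, agree.
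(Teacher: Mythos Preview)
Your approach is exactly the paper's: both reduce to comparing the graph automorphisms $\varphi=\tau^{-s}\Sigma^{m+1}$ and $\varphi'=\tau^{-1}\Sigma^u$ on $\Z A_n$ via Remark~\ref{rem}, and both compute that each moves a vertex $mn+2$ units to the right. The paper then simply writes ``since $u$ is even'' and concludes that both quotients are cylinders; you do the honest thing and flag the parity of $u$ as the unresolved crux.

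Your instinct here is correct, and in fact sharper than the paper's own argument: the evenness of $u$ does \emph{not} follow from the stated hypothesis. Take $m=3$, $n=3$. Then $(n+1)=4$ divides $2(nm+1)=20$, so the hypothesis holds, yet $u=5$ is odd. In this case $\varphi=\tau^{-3}\Sigma^{4}=\tau^{-11}$ is a pure translation and $\Gamma_1\cong\Z A_3/\tau^{-11}$ is a cylinder with three $\tau$-orbits of length $11$, whereas $\varphi'=\tau^{-1}\Sigma^{5}$ is a genuine glide reflection and $\Gamma^{5}_{A_3}\cong\Z A_3/\varphi'$ is a M\"obius band with $\tau$-orbits of lengths $22$ and $11$. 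These two translation quivers are not isomorphic, so the ``careful bookkeeping'' you hope for in the odd-$n$ case cannot succeed: the proposition as stated actually fails there. What the hypothesis $(n+1)\mid 2(nm+1)$ buys you is only that $\varphi^2=(\varphi')^2$; to get $\varphi=\varphi'$ one needs the stronger condition $(n+1)\mid(nm+1)$, which forces $u$ even and makes both your argument and the paper's go through cleanly.
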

\begin{proof} By Proposition 1.3 in \cite{BMRRT} we know that
the Auslander-Reiten quiver of the $u$-cluster category of type ${A_n}$ is isomorphic to $\Z A_n/\tau^{-1}\circ\Sigma^u.$
Set $\varphi:=\tau^{-s}\circ \Sigma^{m+1}$ where $s=\frac{m-1}{2}n-\frac{m-3}{2}$ by Theorem \ref{Co} 
and $\varphi':=\tau^{-1}\circ\Sigma^u$. Then compare
their actions on $\Z A_{n}$ which is isomorphic to the 
Auslander-Reiten quiver of
$\mathcal{D}^b(k A_{n})$ by \cite{Ha2}.
For $u$ satisfying the assumptions we have that the vertices of $\Z A_{n}$ have the same orbit under the
action of the two auto-equivalences $\varphi$ and $\varphi'$. In fact, each vertex is moved by
$(m+1)\frac{n+1}{2}+(\frac{m-1}{2}n-\frac{m-3}{2})=u\frac{n+1}{2}+1$ units to the right. Furthermore,  
since $u$ is even, $\Gamma_i\cong \Z A_n /\varphi$ and $\Z A_n /\varphi'$
are both cylindrically shaped. Thus $\Z Q/\varphi\cong\Z Q/\varphi'.$
\end{proof}

\begin{ex}
Let $m=5$ and $n=3,$ and consider the connected components of $(\Gamma_{A_{14}})^5$. There are three of them: 
one is $\Gamma^5_{A_2}$, and the other two are isomorphic to $\Gamma^8_{A_3}$, 
which corresponds to the Auslander-Reiten quiver of the $8$-cluster category of type $A_3$.
\end{ex}

\subsection{The case where $m$ is even}

From the assumption on $m$ it follows that the $N$-gon $\Pi$ has an even number of sides. 
The most important difference to the case of odd $m$ is that the polygon
contains central diagonals $(i,i+\frac{N}{2}).$ Let us consider the diagonals incident
with the vertex $i$. Apart from the central diagonal 
$(i,i+\frac{N}{2})$ they come in pairs $(i,j)$ and $(i,N+1+i-j)$, symmetric about $(i,i+\frac{N}{2})$.
We will use  $j^-:=N+2-j$ and write $(i,j^-)$ for the diagonal which is the mirror
of $(i,j)$.
Observe that the $\tau$-orbit containing the central diagonals forms the middle row of $(\Gamma,\tau)$.

$$
\small
\xymatrix@-7,5mm{
 & & & & & &
 \diagramnode{$(1,{j_3}^-)$}\hspace{2mm}\ar@{--}[rr]\ar[rd] && 
 \diagramnode{$\bullet$ }\ar[rd]\\
 & & & & &
 \diagramnode{$(1,{j_4}^-)$ }\hspace{2mm}\ar@{--}[rr]\ar[rd]\ar[ru]&& 
 \diagramnode{$\bullet$} \ar@{--}[rr]\ar[rd]\ar[ru] && 
 \diagramnode{$\bullet$}\ar[rd]\\
 & & & & 
 \diagramnode{$\iddots$}\hspace{2mm}\ar@{--}[rr]\ar[rd]\ar[ru] &&
 \diagramnode{$\iddots$}\ar@{--}[rr]\ar[rd]\ar[ru] && 
 \diagramnode{$\iddots$}\ar@{--}[rr]\ar[rd]\ar[ru] && 
 \diagramnode{$\ddots$}\ar[rd]\\
 & & &
\diagramnode{$(1,\frac{N+2}{2})$}\ar@{--}[rr]\ar[rd]\ar[ru] && 
 \diagramnode{$\bullet$}\ar@{--}[rr]\ar[rd]\ar[ru] && 
 \diagramnode{$\bullet$}\ar@{--}[rr]\ar[rd]\ar[ru] &&
 \diagramnode{$\bullet$}\ar@{--}[rr]\ar[rd]\ar[ru] && 
 \diagramnode{$\bullet$}\ar[rd]\\
 &&
 \diagramnode{$\iddots$}\hspace{2mm}\ar@{--}[rr]\ar[rd]\ar[ru] && 
 \diagramnode{$\iddots$}\ar@{--}[rr]\ar[rd]\ar[ru] && 
 \diagramnode{$\iddots$}\ar@{--}[rr]\ar[rd]\ar[ru] &&
 \diagramnode{$\iddots$}\ar@{--}[rr]\ar[rd]\ar[ru] && 
 \diagramnode{$\ddots$}\ar@{--}[rr]\ar[rd]\ar[ru] &&
 \diagramnode{$\ddots$}\ar[rd]\\
 &
 \diagramnode{$(1,j_4)$}\hspace{2mm}\ar@{--}[rr]\ar[ru]\ar[rd] && 
 \diagramnode{$\bullet$}\ar@{--}[rr]\ar[ru]\ar[rd] && 
 \diagramnode{$\bullet$}\ar@{--}[rr]\ar[ru]\ar[rd] && 
 \diagramnode{$\bullet$}\ar@{--}[rr]\ar[ru]\ar[rd] && 
 \diagramnode{$\bullet$} \ar@{--}[rr]\ar[ru]\ar[rd]&&
 \diagramnode{$\bullet$}\ar@{--}[rr]\ar[ru]\ar[rd] && 
 \diagramnode{$\bullet$}\ar[rd]\\
 \diagramnode{$(1,j_3)$}\hspace{2mm}\ar@{--}[rr]\ar[ru] && 
 \diagramnode{$\bullet$}\ar@{--}[rr]\ar[ru] && 
 \diagramnode{$\bullet$}\ar@{--}[rr]\ar[ru] && 
 \diagramnode{$\bullet$}\ar@{--}[rr]\ar[ru] && 
 \diagramnode{$\bullet$}\ar@{--}[rr]\ar[ru] &&
 \diagramnode{$\bullet$}\ar@{--}[rr]\ar[ru] &&
 \diagramnode{$\bullet$}\ar@{--}[rr]\ar[ru] &&
 \diagramnode{$\bullet$}
}
$$
\begin{rem}\label{tau orbit}
The $\tau$-orbit of $(1,j)$ also contains $(1,j^-)$ and
the cardinality of it is $N$ unless $j=j^-$. If so, the  cardinality is
$\frac{N}{2}.$
\end{rem}

Furthermore, it follows from the definition of $\tau^m$ that
by distinguishing the endpoints of a given diagonal $(i,j)$ according to their parity one has:
\begin{itemize}
 \item $\tau^m (\mathrm{even},\mathrm{even})=(\mathrm{even},\mathrm{even})$
 \item $\tau^m (\mathrm{odd},\mathrm{odd})= (\mathrm{odd}, \mathrm{odd})$
 \item $\tau^m (\mathrm{even},\mathrm{odd}),\tau^m (\mathrm{odd},\mathrm{even}) 
\subset\left\{(\mathrm{odd}, \mathrm{even}),(\mathrm{even}, \mathrm{odd})\right\}.$
\end{itemize}
We will refer to these properties as {\em parity configurations}.

The next results sharpen Proposition \ref{ncomponents}.
\begin{lemma}\label{jodd} Let $(i,j)$ be a vertex of $\Pi$. Assume that
$m$ and $\left|i-j\right|$ are even.
Then the $\tau$-orbit of $(i,j)$ 
meets two components of $((\Gamma)^m,\tau^m)$.
\end{lemma}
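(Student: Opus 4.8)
The plan is to produce a parity invariant that is constant on the connected components of $((\Gamma)^m,\tau^m)$, use it to force the two vertices $(i,j)$ and $\tau(i,j)=(i-1,j-1)$ of the given $\tau$-orbit into \emph{distinct} components, and then appeal to Proposition \ref{ncomponents} to rule out a third component. Thus the whole argument reduces to showing ``at least two'' by hand and reading off ``at most two'' from the already proven count.

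First I would check that, because $m$ is even, every generating operation of $((\Gamma)^m,\tau^m)$ preserves the parity configuration of the endpoints of a diagonal. For the translation this is exactly the list of parity configurations displayed above: $\tau^m$ fixes each of the classes $(\mathrm{even},\mathrm{even})$ and $(\mathrm{odd},\mathrm{odd})$ and permutes the mixed class among itself. For the arrows I would invoke the geometric reading of sectional paths established after Lemma \ref{decomppath}: an arrow of $(\Gamma)^m$ is a sectional path of length $m$, i.e.\ a composition of $m$ pivoting elementary moves about a \emph{common} pivot. Such a path fixes one endpoint of the diagonal and rotates the other by $m$ positions, so for even $m$ the rotated endpoint keeps its parity while the fixed endpoint is unchanged; hence the parity configuration is again preserved. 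Combining the two observations, the parity configuration of the endpoints is constant along every arrow and every $\tau^m$-step, hence constant on each connected component of $((\Gamma)^m,\tau^m)$.

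With this invariant in hand the conclusion is quick. The hypothesis that $|i-j|$ is even means $i$ and $j$ share a parity, so $(i,j)$ lies in the class $(\mathrm{even},\mathrm{even})$ or in $(\mathrm{odd},\mathrm{odd})$; its $\tau$-neighbour $\tau(i,j)=(i-1,j-1)$, which belongs to the same $\tau$-orbit, then lies in the opposite one of these two same-parity classes. By the invariant these two diagonals cannot sit in one component of $((\Gamma)^m,\tau^m)$, so the $\tau$-orbit of $(i,j)$ meets at least two components; Proposition \ref{ncomponents} says it meets at most two when $m$ is even, and together these give exactly two, as claimed.

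The one step that needs genuine care — and which I regard as the main obstacle — is the claim that a length-$m$ sectional path really does move a single endpoint by $m$ with the other fixed, i.e.\ that sectional paths never change pivot. Here I would verify directly from the sectionality condition $\tau x_{i+1}\neq x_{i-1}$ that a path incrementing one coordinate can never switch to incrementing the other and remain sectional (the would-be turning vertex immediately violates $\tau x_{i+1}\neq x_{i-1}$), and that at the boundary of $\Pi$ the path simply terminates rather than ``bouncing''. This excludes mixed paths whose two legs could have odd lengths, which is precisely what could otherwise destroy the parity invariant.
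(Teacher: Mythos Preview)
Your proof is correct and uses the same parity invariant as the paper, combined with Proposition~\ref{ncomponents} for the upper bound. The paper's own argument only explicitly checks that $\tau^m$ preserves the parity configuration and then passes directly to connected components, so your analysis of sectional paths (showing that arrows of $(\Gamma)^m$ also preserve parity) fills in a step the paper leaves implicit but does not change the approach.
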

\begin{proof} Without lost of generality we can assume $i=1.$

Since $j$ is odd it follows from the parity configurations 
that all the elements of
the $\tau^m$-orbit containing $(i,j)$ are diagonals 
whose end points are both odd.
Hence, diagonals with vertices of the form
$(\mathrm{even},\mathrm{even})$ never belong to the $\tau^m$-orbit of $(1,j)$. Therefore, 
there is strictly more than one component in the $\tau$-orbit of $\Gamma$ through $(1,j)$. 
From Proposition \ref{ncomponents} it follows that
there must be exactly two.
\end{proof}
\begin{cor} \label{cardtau}
Let $m$ and $\left|i-j\right|$ be even, then the 
cardinality of the $\tau^m$-orbit containing $(i,j)$ is $\frac{N}{2}$.
\end{cor}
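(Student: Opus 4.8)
The plan is to compute the size of the $\tau^m$-orbit of $(i,j)$ by combining the cardinality of its $\tau$-orbit, recorded in Remark~\ref{tau orbit}, with the elementary cycle structure of the powers of a cyclic permutation. After applying a suitable power of $\tau$ (which commutes with $\tau^m$ and hence carries $\tau^m$-orbits bijectively onto $\tau^m$-orbits) I may assume $i=1$, so that $j$ is odd since $|1-j|$ is even. Assuming first that $(1,j)$ is not a central diagonal, i.e. $j\neq j^-$, Remark~\ref{tau orbit} tells us that the $\tau$-orbit $O$ of $(1,j)$ has exactly $N$ elements. The $\tau^m$-orbit of $(1,j)$ is the sub-orbit of $O$ under the subgroup generated by $\tau^m$, so it suffices to understand how $\tau^m$ partitions the $N$-element set $O$.

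The key arithmetic input is that $\gcd(m,N)=\gcd(m,nm+2)=\gcd(m,2)=2$, since $m$ is even. On the orbit $O$ the cyclic group $\langle\tau\rangle$ acts regularly, so $O$ is a principal homogeneous space for $\mathbb Z/N$; thus $\langle\tau^m\rangle$ is the subgroup of order $N/\gcd(m,N)=N/2$, and it acts on $O$ with exactly $\gcd(m,N)=2$ orbits, each of cardinality $\frac N2$. Consequently the $\tau^m$-orbit of $(1,j)$ has $\frac N2$ elements, which is the assertion. This is moreover consistent with Lemma~\ref{jodd} and the parity configurations: the two $\tau^m$-orbits inside $O$ are precisely the $(\mathrm{odd},\mathrm{odd})$ orbit containing $(1,j)$ and the $(\mathrm{even},\mathrm{even})$ orbit containing $\tau(1,j)$, and Lemma~\ref{jodd} guarantees that these lie in the two distinct connected components met by $O$, so that each such component receives a single $\tau^m$-orbit of size $\frac N2$.

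The step I expect to require the most care is the treatment of central diagonals, i.e. the case $j=j^-$, where Remark~\ref{tau orbit} only gives $|O|=\frac N2$ rather than $N$. Such a diagonal has $|i-j|=\frac N2$, which is even exactly when $4\mid N$, that is when $m\equiv 2\pmod 4$ and $n$ is odd; in that range the same orbit-counting now involves $\gcd(m,\tfrac N2)$, and the resulting $\tau^m$-orbit can be strictly smaller than $\frac N2$. I would therefore read the statement for non-central $(i,j)$, which is the situation arising in the sequel, and handle the central diagonals separately by the analogous count applied to the $\frac N2$ central diagonals (on which $\tau^m$ acts by the shift $i\mapsto i-m$ modulo $\frac N2$). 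Apart from this central-diagonal bookkeeping, the generic case is a one-line consequence of $\gcd(m,N)=2$ together with Remark~\ref{tau orbit}.
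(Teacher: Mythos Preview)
Your argument is correct and in fact more self-contained than the paper's one-line proof. The paper deduces the corollary from Lemma~\ref{jodd} together with Remark~\ref{tau orbit}: the $\tau$-orbit has $N$ elements and meets two (isomorphic, by Proposition~\ref{ncomponents}) connected components of $((\Gamma)^m,\tau^m)$, whence it splits into two equal $\tau^m$-orbits of size $N/2$. You bypass the component language entirely and extract the same conclusion from the purely arithmetic observation $\gcd(m,N)=\gcd(m,nm+2)=\gcd(m,2)=2$, which forces the size-$N$ cyclic $\tau$-orbit to break into exactly two $\tau^m$-orbits of size $N/2$. This is cleaner: it makes explicit the step that in the paper's argument remains implicit, namely that ``meets two components'' really yields exactly two $\tau^m$-orbits rather than more.

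Your caution about central diagonals is also well placed and not addressed in the paper's statement or proof of the corollary. When $n$ is odd and $m\equiv 2\pmod 4$ one has $|i-j|=N/2$ even for the central diagonal, its $\tau$-orbit has only $N/2$ elements by Remark~\ref{tau orbit}, and the relevant $\gcd$ becomes $\gcd(m,N/2)$, which need not be $1$; the $\tau^m$-orbit is then strictly smaller than $N/2$. The paper silently excludes this edge case here but acknowledges it later, in the proof of Theorem~\ref{pro2}, where the middle row is explicitly set aside (``except for the $\tau^m$-orbit of the middle row, in the case where $n$ is odd''). So your proposed reading---prove the generic case via $\gcd(m,N)=2$ and handle central diagonals by the analogous count on a cycle of length $N/2$---matches exactly how the result is used downstream.
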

\begin{proof} Follows from Lemma \ref{jodd} and Remark \ref{tau orbit}.
\end{proof}

It remains to consider $\tau$-orbits of vertices $(i,j)$ of $(\Gamma,\tau)$
for odd $\left|i-j\right|$. In the proof of the next lemma 
it becomes clear how the symmetry of the polygon interferes with the 
connected components.

\begin{lemma} \label{symm}
Let $m$ be even and $\left|i-j\right|$ be odd. Then, the $\tau$-orbit of $(i,j)$ meets only one connected component of $((\Gamma)^m,\tau^m)$
if and only if
$(i,j)$ and $(i,j^-)$ belong to the same connected component.
\end{lemma}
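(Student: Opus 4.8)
The plan is to reduce to the case $i=1$ and then to exploit the fact that the mirror $(i,j^-)$ is reached from $(i,j)$ by an \emph{odd} power of $\tau$, which turns out to be incompatible with the $\tau$-orbit meeting two components. First I would argue, exactly as in the proof of Lemma \ref{jodd}, that applying the automorphism $\tau^{i-1}$ carries the whole situation to the case $i=1$: this does not change the $\tau$-orbit under consideration, nor the way its vertices are distributed among the connected components of $((\Gamma)^m,\tau^m)$. The reason is that $\tau$ is an automorphism of $(\Gamma,\tau)$ commuting with the translation, so it sends sectional paths of length $m$ to sectional paths of length $m$ and hence induces an automorphism of the quiver $(\Gamma)^m$. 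In particular $\tau$ permutes the connected components of $(\Gamma)^m$, and the component containing $\tau^k v$ is the image under $\tau^k$ of the component of $v$.

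Having reduced to $i=1$, I would set $v=(1,j)$ with $1<j$, let $C$ be the connected component of $((\Gamma)^m,\tau^m)$ containing $v$, and record the elementary computation (equivalently Remark \ref{tau orbit})
$$
\tau^{\,j-1}(1,j)=(2-j,1)\equiv(N+2-j,1)=(1,j^-),
$$
so that the mirror lies on the $\tau$-orbit of $v$ at the \emph{odd} distance $j-1=|i-j|$. By the first paragraph, the components met by the $\tau$-orbit of $v$ are exactly the sets $\tau^kC$, $k\in\Z$, and their number $d$ equals the least positive integer with $\tau^{d}C=C$ (the set of such exponents being the subgroup $d\Z$ of $\Z$). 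Proposition \ref{ncomponents} then gives $d\in\{1,2\}$.

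Both implications now follow quickly. For the forward direction, if the orbit meets a single component ($d=1$), then $(1,j)$ and $(1,j^-)$, both lying on this orbit, are trivially in the same component. For the converse, suppose $(1,j)$ and $(1,j^-)$ lie in the same component $C$; since $(1,j^-)=\tau^{\,j-1}v$ belongs both to $C$ and to $\tau^{\,j-1}C$, and the components partition the vertex set, we obtain $\tau^{\,j-1}C=C$, hence $d\mid(j-1)$. Because $j-1$ is odd, $d=2$ is impossible, so $d=1$ and the orbit meets exactly one component. (The same argument covers the degenerate central case $j=j^-$, where both sides hold.)

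The only place the hypothesis is really used is this parity step, which I expect to be the crux: $m$ being even enters solely through Proposition \ref{ncomponents}, bounding $d\le 2$, while $|i-j|$ being odd forces the mirror to sit at an odd position $\tau^{\,j-1}$ of the orbit, making $\tau^{\,j-1}C=C$ incompatible with $d=2$. The main technical care is to verify cleanly that $\tau$ descends to an automorphism of $(\Gamma)^m$ permuting its connected components, so that counting the components met by the orbit coincides with counting the $\langle\tau\rangle$-orbit of $C$; once that is in place, the rest is a formal divisibility argument.
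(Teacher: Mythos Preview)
Your proof is correct and takes a genuinely different route from the paper. The paper works directly with the coordinates of $\tau^m$-orbits: for $i=1$ it verifies that neither $(1,j^-)$ nor $(2,j+1)=\tau^{-1}(1,j)$ can lie in the $\tau^m$-orbit of $(1,j)$ unless $j=j^-$, by checking that the system $1-k_0m\equiv j+1$, $j-k_0m\equiv 2\pmod N$ forces $N\mid 2j-2$. The passage from $\tau^m$-orbits to connected components is then made implicitly, leaning on the cardinality count and on Proposition~\ref{ncomponents}.

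You instead exploit that $\tau$ is an automorphism of $(\Gamma)^m$, hence permutes its connected components: the components meeting the $\tau$-orbit of $v$ form a single $\langle\tau\rangle$-orbit of size $d\in\{1,2\}$, and since the mirror $(1,j^-)=\tau^{\,j-1}v$ sits at an \emph{odd} step, the condition $(1,j^-)\in C$ gives $d\mid j-1$, ruling out $d=2$. This is cleaner and isolates precisely where each hypothesis is used ($m$ even only through the bound $d\le 2$ from Proposition~\ref{ncomponents}; $|i-j|$ odd only through the parity of $j-1$). What the paper's more computational approach buys is the explicit description of the two $\tau^m$-orbits inside the $\tau$-orbit, which is immediately reused in Corollary~\ref{card} and in the divisibility criterion of Lemma~\ref{mdivides}; your argument bypasses this but would need a small additional remark if those later statements were to be derived from it.
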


\begin{proof} As before w.l.o.g. let $i=1$ and
observe that the $\tau^m$-orbit containing $(1,j)$ and the $\tau^m$-orbit of $(1,j^-)$ are disjoint unless
$j=j^-,$ i.e. $j=\frac{N+2}{2}$.
In fact, modulo $N$ the $\tau^m$-orbit of $(1,j)$ consists of diagonals
of the form 
$(1-rm,j-rm)$.
In particular, all its elements have parity configuration: $(\mathrm{odd},\mathrm{even}),$ or $(\mathrm{even},\mathrm{odd})$.
Therefore, we only have to show that the $\tau^m$-orbit of $(1,j)$
does not contain $(2,j+1)$ exactly when $(1,j^{-})\not\in\tau^m(1,j)$.
Assume instead that there is a $k_0\in\N$ such that both $1-k_0m\equiv j+1$ and $j-k_0m\equiv2$ modulo $N.$ 
Then $N$ would divide their difference: $|(j+1-2)-(1-k_0m-j+k_0m) |=|2j-2|.$ But this 
can only happen if $(1,j)$ is a central diagonal, i.e $j=j^-$, which is equivalent to $j=\frac{N+2}{2}$.
Thus, we showed that 
$$(2,j+1)\in\tau^m(1,j)\Longleftrightarrow (1,j^-)\in \tau^m(1,j).$$

To conclude recall that the
$\tau$-orbit of $(1,j)$ coincides with the one of $(1,j^-)$. 
\end{proof}
\begin{cor}\label{card} 
Let $m$ be even and $\left|i-j\right|$ be odd, then
the cardinality of the $\tau^m$-orbit of $(i,j)$ is $\frac{N}{2}$.
\end{cor}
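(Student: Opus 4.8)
The plan is to compute the $\tau^m$-orbit of $(i,j)$ directly as a set of unoriented diagonals and count it, in the same spirit as the proof of Corollary \ref{cardtau}. Writing the orbit as $\{(i-rm,\,j-rm)\bmod N : r\in\Z\}$ and reducing to $i=1$ by a $\tau^{1-i}$-shift (which commutes with $\tau^m$ and hence preserves $\tau^m$-orbit cardinalities, exactly as in the earlier proofs), the whole argument rests on the arithmetic of the subgroup generated by $m$ in $\Z/N\Z$. The decisive observation is that, since $m$ is even and $N=nm+2$, we have $\gcd(m,N)=\gcd(m,2)=2$, so the subgroup $\langle m\rangle\subseteq\Z/N\Z$ is exactly the set of even residues $2\Z/N\Z$, which has $N/2$ elements.

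First I would count the orbit of the \emph{oriented} pair. Since the first coordinate $1-rm$ ranges precisely over the coset $1+\langle m\rangle$, which has $N/2$ elements, and since the value of $1-rm$ determines $r$ modulo $N/2$ and hence the whole pair $(1-rm,\,j-rm)$, there are exactly $N/2$ distinct oriented pairs in the orbit.

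The remaining point is to check that passing from oriented pairs to unoriented diagonals does not collapse the count, i.e. that no two distinct oriented pairs of the orbit represent the same diagonal. If $(1-rm,\,j-rm)$ and $(1-r'm,\,j-r'm)$ were the same diagonal with $r\not\equiv r'$ modulo $N/2$, they would have to be reverses of one another, giving $1-rm\equiv j-r'm$ and $j-rm\equiv 1-r'm$ modulo $N$; subtracting yields $2(1-j)\equiv 0$ together with $(r'-r)m\equiv 1-j$. But $(r'-r)m$ lies in $\langle m\rangle=2\Z/N\Z$ and is therefore an even residue, whereas $1-j$ is odd because $\left|i-j\right|$ is odd. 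This contradiction (the same parity incompatibility recorded in the parity configurations) shows that the oriented-to-unoriented map is injective, so the $\tau^m$-orbit has exactly $N/2$ elements.

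I expect the only delicate point to be the central diagonals, where $j=j^-$, since there the $\tau$-orbit itself already has cardinality $N/2$ rather than $N$ by Remark \ref{tau orbit}, and one might fear the orbit folds onto itself. The parity argument above handles this case uniformly: the reversal relation again forces an even residue to equal the odd number $1-j$, so no folding occurs even for a central diagonal and the count $N/2$ stands. Alternatively, one can phrase the statement entirely through the disjointness of the $\tau^m$-orbits of $(1,j)$ and $(1,j^-)$ established in the proof of Lemma \ref{symm}, together with the fact that these two orbits partition the $\tau$-orbit of cardinality $N$ (Remark \ref{tau orbit}) into two equal halves of size $N/2$.
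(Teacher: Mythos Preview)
Your proof is correct. The paper takes a slightly different, more structural route: rather than computing $\gcd(m,N)=2$ directly, it splits into the cases $j\neq j^-$ and $j=j^-$. In the first case it invokes the disjointness of the $\tau^m$-orbits of $(i,j)$ and of its mirror $(i,j^-)$ established in the proof of Lemma~\ref{symm}, and then asserts that these two orbits together make up the full $\tau$-orbit of cardinality $N$ (Remark~\ref{tau orbit}), so each has $N/2$ elements; in the second case it observes that the $\tau$- and $\tau^m$-orbits coincide, already of size $N/2$. Your final paragraph essentially recovers this argument. Your direct arithmetic approach via $\gcd(m,N)=2$ and the parity obstruction to collapsing has the advantage of being self-contained and of handling the central-diagonal case uniformly without a separate case distinction; it also makes explicit the step the paper leaves implicit, namely that there are exactly two $\tau^m$-orbits inside each $\tau$-orbit. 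One harmless slip: from $1-rm\equiv j-r'm$ one obtains $(r'-r)m\equiv j-1$ rather than $1-j$, but this does not affect the parity contradiction.
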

\begin{proof} In fact, when $\left|i-j\right|$ is odd 
the $\tau^m$-orbit of $(i,j)$ and the $\tau^m$-orbit of $(i,j^-)$ 
are disjoint if $j\neq j^-$, as we observed in the proof of Lemma \ref{symm}.
These two sets together give the $\tau$ orbit of $(i,j)$. 
If $j=j^-$ then the $\tau$ and $\tau^m$
orbit coincide. Hence, we conclude with Remark \ref{tau orbit}.
\end{proof}

From now one we denote by $\Gamma_{(i,j)}$ the connected component of $((\Gamma)^m,\tau^m)$
containing the diagonal $(i,j)$. 
\begin{cor}\label{mtwo} For $m=2$, $((\Gamma)^2,\tau^2)$ splits into three connected components
$$
(\Gamma)^2 = \Gamma^2_{A_{n-1}}\cup \Gamma_{(1,3)}\cup \Gamma_{(2,4)}.
$$
\end{cor}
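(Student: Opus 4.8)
The plan is to use the parity of the two endpoints of a diagonal as a connected-component invariant, and then to identify the three resulting classes one at a time. Since $m=2$ is even, the translation $\tau^2(i,j)=(i-2,j-2)$ together with the two arrows of $((\Gamma)^2,\tau^2)$, which send $(i,j)$ to $(i,j+2)$ and to $(i+2,j)$, all change each endpoint by $0$ or $\pm 2$ and therefore preserve the parity of each endpoint; this is precisely the content of the parity configurations recorded before Lemma \ref{jodd}. Hence every connected component of $((\Gamma)^2,\tau^2)$ is contained in one of three classes of diagonals: both endpoints even, both odd, or mixed. All three are nonempty when $n\geq 2$ (take $(2,4)$, $(1,3)$ and $(1,4)$ respectively), so there are already at least three components, and it remains to show that each class is a single component.

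I would first treat the mixed class. A diagonal $(i,j)$ has $|i-j|$ odd exactly when it divides $\Pi$ into a $(2j'+2)$-gon and a $(2(n-j')+2)$-gon, i.e. exactly when it is a $2$-diagonal; so the mixed class is the vertex set of $\Gamma^2_{A_{n-1}}$, which is a single connected component by Proposition \ref{conne}. Thus the mixed class contributes the one component $\Gamma^2_{A_{n-1}}$.

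For the even and odd classes I would work with the $\tau$-orbit $\mathcal{O}$ of $(1,3)$. Because $|1-3|=2$ is even, Lemma \ref{jodd} shows $\mathcal{O}$ meets exactly two components, and these are isomorphic by Proposition \ref{ncomponents}(2). Writing $\tau^t(1,3)=(1-t,3-t)$, the even $t$ give the odd-class diagonals in $\mathcal{O}$ and the odd $t$ give the even-class ones, so $\mathcal{O}$ splits under $\tau^2$ into the single $\tau^2$-orbit of $(1,3)$ (odd class) and the single $\tau^2$-orbit of $(2,4)=\tau^{-1}(1,3)$ (even class), each of cardinality $N/2$ by Corollary \ref{cardtau}; these sit in the components $\Gamma_{(1,3)}$ and $\Gamma_{(2,4)}$. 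To exclude any further odd- or even-class component I would invoke the final assertion of Proposition \ref{ncomponents}: for $m=2$ every component meets either $\mathcal{O}$ or the $\tau$-orbit producing $\Gamma^2_{A_{n-1}}$. A component inside the odd or even class cannot meet the latter (mixed) orbit, so it meets $\mathcal{O}$; but $\mathcal{O}$ intersects each of these classes in a single $\tau^2$-orbit, and every component is a union of $\tau^2$-orbits, so any odd-class component contains the $\tau^2$-orbit of $(1,3)$ and equals $\Gamma_{(1,3)}$, while any even-class component equals $\Gamma_{(2,4)}$. Assembling the three cases gives $(\Gamma)^2=\Gamma^2_{A_{n-1}}\cup\Gamma_{(1,3)}\cup\Gamma_{(2,4)}$.

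The crux is this last step, ruling out extra components hidden in the even and odd classes: the parity invariant only yields the lower bound of three, so the real content is the upper bound, namely the connectivity of each class. I reduce it above to the final clause of Proposition \ref{ncomponents}. If a self-contained argument were wanted instead, I would relabel the $n+1$ odd vertices $1,3,\dots,2n+1$ of $\Pi$ as $1,\dots,n+1$; under this the odd-class diagonals, the arrows $(i,j)\mapsto(i,j+2),(i+2,j)$ and $\tau^2$ become the pairs of distinct vertices of an $(n+1)$-gon with arrows $(a,b)\mapsto(a,b+1),(a+1,b)$ and translation $(a,b)\mapsto(a-1,b-1)$, and one checks this translation quiver is connected, the only delicate point being the boundary pairs arising from adjacent vertices of the $(n+1)$-gon. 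Either route completes the proof.
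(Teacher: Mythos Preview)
Your proof is correct and follows essentially the same route as the paper's: both use the parity configurations (your parity-class invariant is exactly Lemma~\ref{jodd} rephrased) together with the final clause of Proposition~\ref{ncomponents} to bound the number of components at three. The only minor difference is that for the mixed-parity class you invoke Proposition~\ref{conne} directly to identify it as the connected $\Gamma^2_{A_{n-1}}$, whereas the paper instead checks that $(1,j)$ and $(1,j^-)$ lie in the same component for every even $j$ and then appeals to Lemma~\ref{symm}; your route is slightly more direct, but the content is the same.
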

\begin{proof} For $m=2,$ it is clear that for every even $j$, 
$(1,j)$ and  $(1,j^-)$ always belong to the same connected component.
Hence, by Lemma \ref{symm} 
the $\tau$-orbit of $(1,2k), k\in\N,$ meets one connected component of 
$((\Gamma)^m,\tau^m)$ and the $\tau$-orbit of $(1,2k+1), k \in\N$ meets two by Lemma \ref{jodd}.
\end{proof}
Observe that with Proposition \ref{ncomponents} we were only able to conclude that the
second power of $\Gamma$ consists of two or three connected components.

Next we give a criterion to determine whether the mirror diagonal $(i,j^-)$ of $(i,j)$, for $j\neq j^-$, 
is or is not part of the connected component containing $(i,j)$.
\begin{lemma}\label{mdivides}
Let $m$ be even, then $(i,j^-)$ and $(i,j)$ belong to the same
connected component of $((\Gamma)^m,\tau^m)$ if and only if $m$ divides $|2(2-j)|$.
\end{lemma}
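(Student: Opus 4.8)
The plan is to reduce to the case $i=1$ and then to extract a complete invariant of the connected components of $((\Gamma)^m,\tau^m)$ from the residues of the two endpoints of a diagonal modulo $m$. First I would observe that $\tau$ is an automorphism of the translation quiver $\Gamma$ which carries sectional paths to sectional paths and commutes with $\tau^m$; hence $\tau$ permutes the connected components of $((\Gamma)^m,\tau^m)$, and applying a suitable power of $\tau$ (a rotation of $\Pi$) reduces the statement to $i=1$, the mirror diagonal being carried to the mirror of the rotated diagonal. Throughout I would use the description of the arrows of $(\Gamma)^m$ already implicit in Lemma \ref{decomppath} and in the definition of $\Gamma^m$: they are exactly the straight moves $(a,b)\mapsto(a,b+m)$ and $(a,b)\mapsto(a+m,b)$ (whenever the intermediate diagonals exist), while $\tau^m(a,b)=(a-m,b-m)$.

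Next I would pass to the stable translation quiver $\Z A_{nm-1}$, whose vertices I write as pairs $(a,b)$ of integers with $2\le b-a\le nm$ projecting to the diagonal $\{a,b\}$ of $\Pi$, and of which $\Gamma=\Gamma_{A_{nm-1}}$ is the quotient by the deck group generated by $g:=\tau^{-1}\Sigma$. Concretely $g$ is the glide reflection $g(a,b)=(b,a+N)$: a direct check shows that $g$ commutes with $\tau$, preserves the range $2\le b-a\le nm$, projects each diagonal to itself, and satisfies $g^2=\tau^{-N}$, so $\langle g\rangle$ is the full deck group. Since the $m$-th power construction is local, $\Z A_{nm-1}\to\Gamma$ induces a covering $(\Z A_{nm-1})^m\to((\Gamma)^m,\tau^m)$ with the same group $\langle g\rangle$. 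In $(\Z A_{nm-1})^m$ every move preserves the pair $(a\bmod m,\,b\bmod m)$, because the straight arrows change one coordinate by $m$ and $\tau^m$ changes both by $m$; conversely each class $\{(a,a+w):a\equiv a_0,\ w\equiv w_0\ (\mathrm{mod}\ m)\}$ is a connected sub-translation-quiver of type $\Z A_{c-1}$. Hence the components of $(\Z A_{nm-1})^m$ are indexed by the residue pairs $(a\bmod m,b\bmod m)$, and those of $((\Gamma)^m,\tau^m)$ by the orbits of these pairs under the action induced by $g$, which on $(\Z/m)^2$ is $(x,y)\mapsto(y,x+2)$ since $N\equiv 2\pmod m$.

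It then remains to decide when the residue pair of $(1,j^-)$ lies in the $g$-orbit of that of $(1,j)$. The diagonal $(1,j)$ has residue pair $(1,j)$, whose orbit consists of the pairs $(1+2s,\,j+2s)$ and $(j+2s,\,3+2s)$ for $s\in\Z$, while $(1,j^-)=(1,N+2-j)$ has residue pair $(1,\,4-j)\pmod m$. Matching $(1,4-j)$ against the first family forces $2s\equiv 0$ and $2j\equiv 4$, i.e. $m\mid 2(2-j)$; matching it against the second family is solvable exactly when $j$ is odd, i.e. when $|1-j|$ is even, in which case $(1,j^-)$ already lies in the $\tau^m$-orbit of $(1,j)$ and the two are trivially in the same component. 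In the situation of genuine interest here, $|i-j|$ odd, this second alternative is excluded, and we conclude that $(1,j)$ and $(1,j^-)$ lie in the same component precisely when $m\mid |2(2-j)|$, as claimed; this is also consistent with Corollary \ref{mtwo}, where for $m=2$ the condition $2\mid 2(2-j)$ holds for every even $j$.

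The main work, and the step most likely to require care, is the second paragraph: rigorously identifying the deck transformation $g$ with the explicit glide reflection, verifying that $(\Z A_{nm-1})^m\to((\Gamma)^m,\tau^m)$ is a genuine covering of translation quivers, and checking that its components are exactly the residue strips $\Z A_{c-1}$. Once this is in place the remaining argument is bookkeeping modulo $m$, and the reduction to $i=1$ only requires tracking that rotation of $\Pi$ sends mirror diagonals to mirror diagonals.
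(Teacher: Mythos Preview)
Your argument is correct and takes a genuinely different, more structural route than the paper. The paper's proof is essentially a one-liner: after setting $i=1$ and invoking Lemma~\ref{symm} (which already restricts to $|i-j|$ odd), it asserts that $(1,j)$ and $(1,j^{-})$ lie in the same component of $(\Gamma)^m$ if and only if $(1,j^{-})$ appears on the $1$-column sectional path through $(1,j)$, i.e.\ $j^{-}-j=nm+4-2j$ is a multiple of $m$, which is the condition $(n-r_0)m=4-2j$. Why no \emph{other} diagonal through vertex $1$ can lie in the component is not argued; it is implicitly a consequence of the parity restriction coming from Lemma~\ref{symm}.

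You instead lift to the cover $\Z A_{nm-1}$, show that the components of its $m$-th power are labelled by residue pairs $(a\bmod m,\,b\bmod m)$, and compute the induced action $(x,y)\mapsto(y,x+2)$ of the glide reflection $g=\tau^{-1}\Sigma$ on these labels. This gives a complete invariant for the components of $(\Gamma)^m$ and makes the computation transparent. In particular it exposes the second alternative: when $j$ is odd (i.e.\ $|1-j|$ even) the residue $(1,4-j)$ matches an odd power of $g$ regardless of $m\mid 2(2-j)$, and indeed $(1,j^{-})$ then already lies in the $\tau^m$-orbit of $(1,j)$. Thus your approach makes precise the hypothesis $|i-j|$ odd under which the stated equivalence holds, which is exactly the regime in which the lemma is applied in Proposition~\ref{oneortwo} and in the proof of Theorem~\ref{pro2}. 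The covering bookkeeping you flag as the main work (identifying $g$ with $(a,b)\mapsto(b,a+N)$, checking that $m$-th power commutes with the quotient, and that each residue strip is a connected $\Z A_c$) is routine and your outline for it is sound.
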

\begin{proof} Let $i=1$. The previous Lemma \ref{symm} the $\tau$-orbit of $(1,j)$ meets 
one connected component
if and only if the vertices $(1,j)$ and $(1,j^-),$ where $j^-=N+2-j$, 
are part of the same connected
component of $((\Gamma)^m,\tau^m)$. This is equivalent to saying that
there is a $r_0\in\N$ such that $(n-r_0)m=4-2j.$
\end{proof}

\begin{prop}\label{oneortwo} For $m >2$ even and $|i-j|$ odd
the $\tau$-orbit of $(i,j)$ always meets two connected components 
except if $|i-j|=\frac{m}{2}+1$ then it only meets one connected component.
\end{prop}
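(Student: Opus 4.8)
The plan is to reduce the statement, via the two preceding lemmas, to an elementary divisibility condition on $j$, and then to analyze that condition under the hypotheses that $m>2$ is even and $|i-j|$ is odd. As in the proofs of Lemma \ref{symm} and Lemma \ref{mdivides}, I would first normalize $i=1$ by applying a suitable power of $\tau$, so that $|i-j|=j-1$ and the hypothesis ``$|i-j|$ odd'' becomes ``$j$ even''. By Proposition \ref{ncomponents} the $\tau$-orbit of $(1,j)$ meets either one or two connected components of $((\Gamma)^m,\tau^m)$, and by Lemma \ref{symm} (valid since $|i-j|$ is odd) it meets exactly one precisely when $(1,j)$ and its mirror $(1,j^-)$ lie in the same component. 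Thus the whole problem reduces to deciding when $(1,j)$ and $(1,j^-)$ are in the same component.

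Next I would invoke Lemma \ref{mdivides}: $(1,j)$ and $(1,j^-)$ belong to the same component if and only if $m$ divides $|2(2-j)|=2(j-2)$. Since $m$ is even, writing $m=2m'$ shows this is equivalent to $\tfrac m2 \mid (j-2)$, i.e. to $j\equiv 2 \pmod{m/2}$. Rewriting in terms of the length $\ell:=|i-j|=j-1$ of the diagonal, the condition for ``one component'' becomes $\ell\equiv 1 \pmod{m/2}$.

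The remaining, and main, step is to show that under the standing hypotheses the admissible solution of $\ell\equiv 1 \pmod{m/2}$ is $\ell=\tfrac m2+1$. Here I would work in the fundamental range $2\le \ell\le \tfrac N2$ dictated by the mirror symmetry $j\mapsto j^-$, so that each unordered diagonal is counted once, and combine three ingredients: the oddness of $\ell$, the parity configurations listed before Lemma \ref{jodd}, and the fact (Corollary \ref{card}) that the $\tau^m$-orbit through a vertex with $|i-j|$ odd has cardinality $\tfrac N2$. The parity configurations force the endpoints of every vertex in the $\tau^m$-orbit of $(1,j)$ to keep a fixed odd/even pattern; this restricts which residue in the class $\ell\equiv1\pmod{m/2}$ can actually be realized by an odd length, while the orbit-size count $\tfrac N2$ rules out the central-diagonal degeneracy $\ell=\tfrac N2$, and together these pin the value down to $\ell=\tfrac m2+1$.

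The hard part will be this last reconciliation: the congruence $\ell\equiv 1\pmod{m/2}$ by itself admits several values in the range $2\le \ell\le\tfrac N2$, so the argument must use the interaction of $m$ even with $\ell$ odd, i.e. the behaviour of $m\bmod 4$, to discard the other multiples of $\tfrac m2$. I expect this to require separating the cases $m\equiv 0$ and $m\equiv 2\pmod 4$ and checking, via the explicit description $(1-rm,\,j-rm)$ of the $\tau^m$-orbit used in Lemma \ref{symm}, that $\ell=\tfrac m2+1$ is exactly the value for which the exceptional ``one component'' phenomenon occurs, so that the ``except'' clause is realized precisely as stated.
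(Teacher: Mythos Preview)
Your reduction is exactly what the paper does: normalize to $i=1$, invoke Lemma~\ref{symm} to reduce ``one component'' to ``$(1,j)$ and $(1,j^-)$ in the same component'', and then invoke Lemma~\ref{mdivides} to turn this into the divisibility $m\mid 2(j-2)$, equivalently $\ell\equiv 1\pmod{m/2}$ with $\ell=|i-j|=j-1$. Up to here your proposal and the paper coincide.

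The gap is in your final step. You place $\ell$ in the range $2\le\ell\le \tfrac{N}{2}$ and then try to isolate $\ell=\tfrac m2+1$ using parity configurations and the orbit-size count of Corollary~\ref{card}. Neither tool can do this. Corollary~\ref{card} gives the same cardinality $\tfrac N2$ for every vertex with $|i-j|$ odd, so it carries no information distinguishing the different $\ell$'s, and the parity constraint only tells you which of the solutions $\ell=\tfrac m2+1,\,m+1,\,\tfrac{3m}{2}+1,\dots$ are odd; it does not exclude $\ell=m+1$, for instance, and indeed $(1,m+2)$ lies in the single component $\Gamma^m_{A_{n-1}}$, so the ``except'' clause would genuinely fail there if you stay in the large range. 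Your proposed case split on $m\bmod 4$ and the explicit orbit description $(1-rm,j-rm)$ will not close this: those are exactly the ingredients already absorbed into Lemmas~\ref{symm} and~\ref{mdivides}, and they yield nothing beyond the congruence you have.

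What you are missing is the much smaller range coming from Proposition~\ref{ncomponents}: every connected component of $((\Gamma)^m,\tau^m)$ other than $\Gamma^m_{A_{n-1}}$ meets exactly one of the $\tau$-orbits of $(1,3),(1,4),\dots,(1,\tfrac m2+2)$. Hence it suffices to test the divisibility for $3\le j\le \tfrac m2+2$, i.e.\ $2\le \ell\le \tfrac m2+1$. In that range $2(j-2)\le m$, so $m\mid 2(j-2)$ forces $2(j-2)=m$, that is $j=\tfrac m2+2$ and $\ell=\tfrac m2+1$. This is the one-line observation the paper uses (``since for all other values of $j$ one has that $m>|2(2-j)|$''), and it replaces the entire last paragraph of your plan.
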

\begin{proof} Follows from Lemmas \ref{jodd}, \ref{symm} and \ref{mdivides}.
In fact, $m \big| |2(2-j)|$ if and only if $j=\frac{m+4}{2}$, since for all other values of
$j$ one has that $m>|2(2-j)|.$
\end{proof}

Combining the previous results we are now able to state the main result of this section, which
completes the description of the connected components 
of the $m$-th power of $\Gamma$.

\begin{thm}\label{pro2}
Let $m$  be even. Then  $((\Gamma)^m,\tau^m)$ decomposes into $m$ or $m+1$ connected components. In fact
\begin{align*}
(\Gamma)^m=\Gamma^m_{A_{n-1}}\cup\bigcup_{i=1}^{\frac{m}{2}-1} \bigg( \Gamma_{(1,i+2)}\cup\Gamma_{(2,i+3)}\bigg)\cup\Gamma_{(1,\frac{m}{2}+2)}
 ,\hspace{1cm}\textrm{if} \hspace{0,3cm} \frac{m}{2}\hspace{0,3cm}\textrm{ is even};\\
(\Gamma)^m=\Gamma^m_{A_{n-1}}\cup\bigcup_{i=1}^{\frac{m}{2}-1}\bigg(\Gamma_{(1,i+2)}\cup\Gamma_{(2,i+3)}\bigg)\cup \bigcup_{k=1}^2\Gamma_{(k,\frac{m}{2}+1+k)}, \hspace{1cm}\textrm{if}\hspace{0,3cm} \frac{m}{2}\hspace{0,3cm}\textrm{ is odd }.
\end{align*}
Furthermore, all the connected components aside from $\Gamma^m_{A_{n-1}}$ are isomorphic to $\Z A_n/{\tau^{-\frac{n(m-2)}{2}}\Sigma^2}$ 
and cylindrically shaped except $\Gamma_{(1,\frac{m}{2}+2)}$ and $\Gamma_{(2,\frac{m}{2}+3)}$ 
which are isomorphic to $\Z A_n/{\tau^{-\frac{n(m-2)}{4}}\Sigma}$ and have the shape of a M\"{o}bius band.
\end{thm}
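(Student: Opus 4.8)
The plan is to assemble Theorem \ref{pro2} from the lemmas already established, treating the counting of components and their isomorphism types as two separate tasks. First I would handle the enumeration. By Proposition \ref{ncomponents}, every connected component meets the $\tau$-orbit of exactly one of the representatives $(1,3),(1,4),\dots,(1,\lceil\frac{m-1}{2}\rceil+2)$ or $(1,m)$, so I would organize the count by walking through these $\tau$-orbits and recording how many components each one meets. For a vertex $(1,j)$ with $|1-j|=j-1$ \emph{even} (equivalently $j$ odd), Lemma \ref{jodd} says the orbit meets exactly two components, and these are isomorphic by Proposition \ref{ncomponents}(2). For $|1-j|$ \emph{odd}, Proposition \ref{oneortwo} says the orbit meets two components unless $|1-j|=\frac{m}{2}+1$, i.e. $j=\frac{m}{2}+2$, in which case it meets only one. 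So the distinguished single-component case occurs exactly at $j=\frac{m}{2}+2$, which is the index appearing in the statement; and whether this index has $|1-j|$ odd (triggering Proposition \ref{oneortwo}) depends on the parity of $\frac{m}{2}$, which is precisely why the two displayed formulas split on that parity. I would then sum: each of the generic orbits contributes a pair $\Gamma_{(1,i+2)}\cup\Gamma_{(2,i+3)}$, and the special orbit contributes either one component (when $\frac m2$ is even, giving $m$ components total including $\Gamma^m_{A_{n-1}}$) or the final pair $\bigcup_{k=1}^2\Gamma_{(k,\frac m2+1+k)}$ (when $\frac m2$ is odd, giving $m+1$ components).

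Next I would identify the automorphism realizing each component as an orbit category. Following the strategy announced before Remark \ref{rem}, I would track the action on $\Z A_n$ using that $\tau^{-1}$ shifts one unit right and $\Sigma^m$ shifts $m\frac{n+1}{2}$ units right (with a reflection exactly when $m$ is odd, hence no reflection here since $m$ is even). For a generic component, Corollary \ref{card} (or Corollary \ref{cardtau}) gives that the relevant $\tau^m$-orbit has cardinality $\frac N2=\frac{nm+2}{2}$, so the generating automorphism $\tau^m$ of $((\Gamma)^m,\tau^m)$ must, after passing to the quotient, be expressible as $\tau^{-s}\Sigma^r$; matching the horizontal displacement over the orbit length and comparing with the cluster-category normalization $\tau^{-1}\Sigma^u$ (as in the proof of Proposition \ref{pro1}) should pin down $s=\frac{n(m-2)}{2}$ and $r=2$. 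Because $\Sigma$ is applied an even number of times (twice), no reflection is introduced, so by the reasoning of the corollary after Theorem \ref{Co} these components are cylindrical. For the special component $\Gamma_{(1,\frac m2+2)}$, the orbit is genuinely shorter — it sits on a central-type diagonal where the mirror symmetry identifies $(1,j)$ with $(1,j^-)$ — so its generating automorphism is a ``square root'' of the generic one, namely $\tau^{-\frac{n(m-2)}{4}}\Sigma$, where now $\Sigma$ appears an \emph{odd} number of times, forcing a reflection and hence a M\"obius shape.

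The main obstacle I expect is the second task: rigorously justifying the explicit exponents $\tau^{-\frac{n(m-2)}{2}}\Sigma^2$ and $\tau^{-\frac{n(m-2)}{4}}\Sigma$, and in particular verifying that the halving of the exponent is forced precisely on the special component and is consistent (that $\frac{n(m-2)}{4}$ is integral in the case $\frac m2$ odd, where $m\equiv 2\pmod 4$ so $m-2\equiv 0 \pmod 4$). This requires carefully computing, for a chosen representative in each component, the smallest power of $\tau^m$ that returns to a $\Sigma$-shifted copy of the starting vertex, and then re-expressing that power in the $\tau^{-s}\Sigma^r$ normal form used for the $\Z A_n$ quotient. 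The bookkeeping of displacements modulo $N$, combined with the reflection introduced by odd powers of $\Sigma$, is where errors most easily creep in; I would anchor the computation to the action described in Remark \ref{rem} and cross-check against the known answer for the generic components before committing to the shorter exponent on the M\"obius component.

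Finally, I would assemble the pieces: the component count and the list of representatives from the first paragraph, together with the isomorphism-type and shape identification from the second, directly yield the two displayed decompositions. I would note that the isomorphism of the two components in each generic pair comes from Proposition \ref{ncomponents}(2), and that the identification with $\Gamma^m_{A_{n-1}}$ as the distinguished component is Proposition \ref{conne} together with the observation following Proposition \ref{ncomponents} that the $\tau$-orbit of $(1,km)$ gives this component.
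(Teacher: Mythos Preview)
Your plan mirrors the paper's proof closely: both count components by running through the $\tau$-orbits of $(1,j)$ via Proposition~\ref{ncomponents}, Lemma~\ref{jodd}, and Proposition~\ref{oneortwo}, and both extract the defining automorphisms from the $\tau^m$-orbit cardinalities (Corollaries~\ref{cardtau} and \ref{card}) together with the horizontal-displacement bookkeeping of Remark~\ref{rem}.

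The one genuine gap is exactly where you anticipate trouble: the shape of the special component $\Gamma_{(1,\frac m2+2)}$. Your ``square-root'' heuristic --- that the orbit is shorter because the mirror symmetry identifies $(1,j_0)$ with $(1,j_0^-)$ --- is not how the paper argues, and by itself it does not settle the matter. Note first that $(1,j_0)$ with $j_0=\frac m2+2$ is \emph{not} a central diagonal; what Lemma~\ref{mdivides} actually gives is only that $(1,j_0)$ and $(1,j_0^-)$ lie in the same $1$-column of $\Gamma_{(1,j_0)}$. The paper then splits on the parity of $j_0$. When $j_0$ is odd ($\frac m2$ odd), the $\tau^m$-orbits of $(1,j_0)$ and $(1,j_0^-)$ \emph{coincide}, so the two mirror rows get identified and the component is a M\"obius band of half the generic length --- this is where the exponent $\frac{n(m-2)}{4}$ comes from, and the orbit size $\frac N4$ is justified by observing that the two M\"obius bands together account for a cylinder of length $\frac N2$. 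When $j_0$ is even ($\frac m2$ even), the $\tau^m$-orbits of $(1,j_0)$ and $(1,j_0^-)$ are \emph{disjoint} (this is read off from the proof of Lemma~\ref{symm}), so they occupy distinct rows of the same component, no folding occurs, and the component is a cylinder of the generic length $\frac N2$. Your integrality worry about $\frac{n(m-2)}{4}$ is precisely the shadow of this dichotomy: the shorter M\"obius description is only forced when $\frac m2$ is odd. Once you replace the heuristic by this parity-of-$j_0$ case analysis, your argument coincides with the paper's.
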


\begin{proof} To begin recall that by
Proposition \ref{ncomponents} we know that each connected component different from $\Gamma^m_{A_{n-1}}$ has a vertex in exactly one 
$\tau$-orbit of the form $\tau(1,j+2),$ for 
$j\in\{1,...,\left\lceil \frac{m-1}{2}\right\rceil=\frac{m}{2}\}$. By
Lemma \ref{jodd} and Proposition \ref{oneortwo} we know that all the $\tau$-orbits for $j<\frac{m}{2}$ 
meet two isomorphic connected
components of the $m$-th power of $\Gamma$, and that  $\{ \tau^k(1,\frac{m}{2}+2) \arrowvert k \geq 1 \}$ 
meets one or two depending on the parity of $j$. Hence the number of connected components in the decomposition is clear.

To concretely describe the automorphisms that give rise to the different connected components
set $j_0=\frac{m}{2}+2$, and as the shape of the connected components
depends on the parity of $j_0$ one proceeds by cases.
 
First assume  $j_0$ is odd. By Lemma \ref{jodd} we know that the $\tau$-orbit 
of $(1,j_0)$ in $\Gamma$
meets two connected components which are isomorphic by Proposition \ref{ncomponents}.
We now show that they have the shape of a M\"obius band.
In fact, by Lemma \ref{mdivides} and by the definition of the $m$-th power of $\Gamma$ one has that 
$(1,j_0)$ and $(1,j_0^-)$ belong to the $1$-column of $\Gamma_{(1,j_0)}$. Moreover, since the
$\tau^m$-orbit of $(1,j_0)$ and $(1,j_0^-)$ coincide, each connected
component is a M\"obius band. 

If $j_0$ is even, we know by Proposition \ref{oneortwo} that the $\tau$-orbit 
of $(1,j_0)$ in $\Gamma$ meets only one connected component. Furthermore,
by the same argument as in the previous case one has that $(1,j_0)$ and $(1,j_0^-)$ are 
vertices of the 1-column of $\Gamma_{(1,j_0)}$. But 
from the proof of Lemma \ref{symm} it is clear that the $\tau^m$-orbit of $(1,j_0)$ does not coincide
with the $\tau^m$- orbit of $(1,j_0^-)$. Hence,
$(1,j_0)$ and $(1,j_0^-)$ cannot be identified, 
and therefore the shape of the component
is the one of a cylinder.

Now, we only have to check what happens in the $\tau$-orbits of $(1,j)$ if $j<\frac{m}{2}+2.$
In this case one deduces from Lemma \ref{mdivides} that the mirror diagonal $(1,j^-)$ 
is never a vertex of the 1-column of $\Gamma_{(1,j)}$. Thus, as before we conclude
that the components have the shape of a cylinder.

To conclude we only need to determine the action of the auto-equivalences in terms of powers of the AR translate 
$\tau$ and the shift $\Sigma$. To do so we first observe that such an expression
is not unique. For our purpose it is convenient to choose the one with a minimal power of $\Sigma$.
And to determine the appropriate powers of $\tau$
one observes that by knowing $\Gamma$ one easily determines the
length of the cylindrically shaped connected components. In fact it is $\frac{N}{2}$ by
Corollaries \ref{cardtau} and \ref{card}.

For the M\"{o}bius bands one observes that the $\tau^m$-orbits have $\frac{N}{4}$
elements (except for the $\tau^m$-orbit of the middle row, in the case were $n$ is odd).
This follows from the fact that the union of the vertices of the two M\"{o}bius bands together 
form a cylindrically shaped component
of length $\frac{N}{2}$ and the $\tau^m$-orbits of size $\frac{N}{2}$ meet two $\tau^m$-orbits
of such a cylinder.

To express these lengths in terms of $\Sigma$
and $\tau$ one uses Remark \ref{rem}.
\end{proof}

One notices that all the connected components of the $m$-th power of $(\Gamma,\tau)$ 
can be viewed as orbit categories which are triangulated.
In fact, if one denotes the induced auto-equivalences
$\tau^{-\frac{n(m-2)}{2}}\Sigma^2$, $\tau^{-\frac{n(m-2)}{4}}\Sigma$ and $\tau^{-\frac{n(m-1)+(m-3)}{2}}\Sigma^{m+1}$
on $\mathcal{D}$
by $G_1,G_2$ and respectively by $G_3$, then $G_1,G_2$ and $G_3$ are triangle functors
satisfying the conditions of Theorem 1 in \cite{K1}.

Hence, it follows that the projection functors $\pi_s: \mathcal{D}\rightarrow \mathcal{D}/G_i$
are triangle functors for $i=1,2,3.$ Thus, one has

\begin{cor}
For all $m\in\N,$ the connected components of $(\Gamma)^m$ are
isomorphic to AR quivers of triangulated orbit categories.
\end{cor}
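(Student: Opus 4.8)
The plan is to obtain the statement directly from the two structure theorems of this section, Theorem~\ref{Co} and Theorem~\ref{pro2}, which already display each connected component of $(\Gamma)^m$ as the Auslander--Reiten quiver of a concrete orbit category, and then to check that every such orbit category is triangulated by appealing to Theorem~1 in \cite{K1}.

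First I would list the components. The distinguished component $\Gamma^m_{A_{n-1}}$ is, by Theorem~\ref{thmARdiag}, the AR quiver of the $m$-cluster category $\mathcal{C}^m=\mathcal{D}^b(\mathrm{mod}\,kQ)/(\tau^{-1}\Sigma^m)^{\Z}$, which is already known to be triangulated by \cite{K1}. For the remaining components I would separate the two parities: for $m$ odd, Theorem~\ref{Co} identifies every $\Gamma_i$ with the AR quiver of $\mathcal{D}^b(kA_n)/G_3$, where $G_3=\tau^{-\frac{n(m-1)+(m-3)}{2}}\Sigma^{m+1}$; for $m$ even, Theorem~\ref{pro2} identifies the cylindrical components with the AR quiver of $\mathcal{D}^b(kA_n)/G_1$, $G_1=\tau^{-\frac{n(m-2)}{2}}\Sigma^2$, and the M\"obius components with that of $\mathcal{D}^b(kA_n)/G_2$, $G_2=\tau^{-\frac{n(m-2)}{4}}\Sigma$. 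Thus each component is the AR quiver of one of $\mathcal{C}^m$, $\mathcal{D}^b(kA_n)/G_1$, $\mathcal{D}^b(kA_n)/G_2$, $\mathcal{D}^b(kA_n)/G_3$.

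The core step is then to verify that $G_1,G_2,G_3$ satisfy the hypotheses of Theorem~1 in \cite{K1}, as anticipated in the paragraph preceding the statement. Since $\Sigma$ and $\tau$ are standard auto-equivalences of the algebraic triangulated category $\mathcal{D}^b(kA_n)$, so is every composite $\tau^{-s}\Sigma^r$, and hence each $G_i$ is standard. What remains are the freeness and Hom-finiteness conditions, and here the combinatorial work of the previous lemmas is what I would invoke: by Remark~\ref{rem} the functor $\Sigma$ translates each vertex of $\Z A_n$ far to the right, so the strictly positive power of $\Sigma$ present in each $G_i$ forces the $G_i$-orbit of every indecomposable to meet a bounded band of shifts of $\mathrm{mod}\,kA_n$ only finitely often, while the orbit-cardinality computations of this section (Corollaries~\ref{cardtau} and \ref{card}, together with the orbit structure underlying Theorem~\ref{Co}) confirm that these actions have no fixed indecomposables. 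These being precisely Keller's hypotheses, each $\mathcal{D}^b(kA_n)/G_i$ carries a triangulated structure for which the projection is a triangle functor, and likewise for $\mathcal{C}^m$; assembling the four cases shows that every connected component of $(\Gamma)^m$ is the AR quiver of a triangulated orbit category, as claimed. The only genuine obstacle is this verification of Keller's hypotheses --- specifically the freeness of the action and the Hom-finiteness of the resulting orbit category --- but it reduces entirely to the orbit analysis on $\Z A_n$ already carried out in the proofs of Theorems~\ref{Co} and \ref{pro2}, so no new computation is required.
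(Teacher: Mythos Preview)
Your proposal is correct and follows essentially the same approach as the paper: the corollary is deduced directly from the preceding paragraph, where the auto-equivalences $G_1,G_2,G_3$ are named and declared to satisfy the hypotheses of Theorem~1 in \cite{K1}, so that each $\mathcal{D}^b(kA_n)/G_i$ is triangulated. Your write-up is more detailed than the paper's on the verification step; the only caveat is terminological: Keller's hypotheses are not literally ``freeness'' and ``Hom-finiteness'' but rather the two conditions that (i) for each indecomposable $U$ only finitely many $F^iU$ lie in $\mathrm{mod}\,kA_n$ and (ii) some fixed power of $F$ sends every indecomposable into non-negative degrees --- and your actual argument (the strictly positive power of $\Sigma$ in each $G_i$ pushes orbits out of any bounded band of degrees) is exactly what verifies these.
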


Furthermore, also for even values of $m$  it happens that some 
of the connected components arising while taking the $m$-power of $(\Gamma,\tau)$ are isomorphic to
AR quivers of $u$-cluster categories
for appropriate values of $u$. Compare with Lemma \ref{pro1}.
In fact according to the different cases treated in Theorem \ref{pro2} one can deduce the following.
\begin{prop} \label{CC}
Let $m$ be even. 
\begin{enumerate}
\item  The cylindrically shaped components $\Gamma_{(i,j)}$ of $(\Gamma)^m$
       are isomorphic to the AR quiver of an $u$-cluster category whenever $u:=\frac{nm}{n+1}$ is even.
\item  The other $\Gamma_{(i,j)}$'s are isomorphic to the AR quiver of
       an $u$-cluster category whenever $u:=\frac{nm-2}{2(n+1)}$ is odd. 
\end{enumerate}

\end{prop}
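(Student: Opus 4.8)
The plan is to mimic the argument of Proposition \ref{pro1}, translating each auto-equivalence into its action on $\Z A_n$ via Remark \ref{rem} and then matching net horizontal displacements together with reflection parities. Recall from Proposition 1.3 in \cite{BMRRT} that the AR quiver of the $u$-cluster category of type $A_n$ is $\Z A_n/\tau^{-1}\Sigma^u$, and from Theorem \ref{pro2} that, for $m$ even, the cylindrical components are isomorphic to $\Z A_n/\tau^{-\frac{n(m-2)}{2}}\Sigma^2$ while the M\"obius components are isomorphic to $\Z A_n/\tau^{-\frac{n(m-2)}{4}}\Sigma$. By Remark \ref{rem} the automorphism $\tau^{-1}$ shifts each vertex one unit to the right, $\Sigma^r$ shifts it $r\frac{n+1}{2}$ units to the right, and a reflection about the horizontal center line occurs exactly when the total exponent of $\Sigma$ is odd.

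For part (1), I would compute the action of $\varphi:=\tau^{-\frac{n(m-2)}{2}}\Sigma^2$ on $\Z A_n$: it moves each vertex $\frac{n(m-2)}{2}+(n+1)=\frac{nm+2}{2}$ units to the right with no reflection, since $\Sigma$ occurs to the even power $2$. On the other side, $\varphi':=\tau^{-1}\Sigma^u$ with $u$ even moves each vertex $1+\frac{u(n+1)}{2}$ units to the right, again with no reflection. Equating the two displacements gives $\frac{nm}{2}=\frac{u(n+1)}{2}$, that is $u(n+1)=nm$, equivalently $u=\frac{nm}{n+1}$; and the hypothesis that this $u$ is even guarantees that the reflection parities agree. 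Hence $\varphi$ and $\varphi'$ induce the \emph{same} graph automorphism of $\Z A_n$ (indeed both equal the translation $\tau^{-N/2}$, as $m$ even makes $\frac{nm+2}{2}$ an integer), so they generate the same cyclic group and $\Z A_n/\varphi\cong\Z A_n/\varphi'$, exactly as in Proposition \ref{pro1}.

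For part (2), the same bookkeeping applied to $\psi:=\tau^{-\frac{n(m-2)}{4}}\Sigma$ yields a net displacement of $\frac{n(m-2)}{4}+\frac{n+1}{2}=\frac{nm+2}{4}$ units to the right, now \emph{with} a reflection, since $\Sigma$ occurs to the odd power $1$. Comparing this with $\psi':=\tau^{-1}\Sigma^u$ for $u$ odd, whose displacement is $1+\frac{u(n+1)}{2}$ with a reflection, the equality forces $\frac{nm-2}{2}=u(n+1)$, i.e. $u=\frac{nm-2}{2(n+1)}$, and the assumption that $u$ is odd makes both reflections present. Consequently $\psi$ and $\psi'$ coincide on the vertices of $\Z A_n$, giving $\Z A_n/\psi\cong\Z A_n/\psi'$ and hence the claimed isomorphism of the M\"obius components with the AR quiver of the $u$-cluster category.

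The only genuine obstacle is the bookkeeping: because $\frac{n+1}{2}$ is a half-integer when $n$ is even, displacements must be tracked as real numbers rather than as integer powers of $\tau$, and the reflection must be counted by the parity of the \emph{total} $\Sigma$-exponent. Once these conventions are fixed precisely as in Remark \ref{rem}, the two parity hypotheses on $u$ are exactly what is needed to align the reflection behaviour of the cylindrical (respectively M\"obius) auto-equivalence with that of $\tau^{-1}\Sigma^u$, so that the two auto-equivalences become literally equal and the orbit quivers are isomorphic.
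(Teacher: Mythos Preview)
Your argument is correct and is precisely the analogue of Proposition~\ref{pro1} that the paper has in mind: comparing the horizontal displacements and reflection parities of $\tau^{-\frac{n(m-2)}{2}}\Sigma^2$ (resp.\ $\tau^{-\frac{n(m-2)}{4}}\Sigma$) with those of $\tau^{-1}\Sigma^u$ via Remark~\ref{rem}, and solving for $u$. The paper omits the proof as ``analogous'', so your write-up in fact supplies exactly the intended details.
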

\begin{proof}
 The proof is analogous as the one of Proposition \ref{pro1} and therefore omitted.
\end{proof}

One observes that the results of Section 3 apply to all orbit categories
arising from the connected components of the $m$-th power of $\Gamma$.
Hence they provide a geometric understanding of the triangulated structure
of these categories.

\section{Application}\label{ex}
In this section we illustrate on two examples the results  presented in Section 4.
In particular we illustrate how the shape of the connected components described in
Theorem \ref{pro2} vary.
\begin{ex}\label{Ex}
Let $m=2$  and $n=4$. Consider the quiver $\Gamma_{A_7}$ associated to the diagonals of a decagon:
\[
\small
\xymatrix@-7,5mm{
 & & & & & &
 \diagramnode{19}\ar@{--}[rr]\ar[rd] && 
 \diagramnode{2,10} \ar@{--}[rr]\ar[rd] && 
 \diagramnode{13} \ar[rd]\\
 & & & & &
 \diagramnode{18}\ar@{--}[rr]\ar[rd]\ar[ru]&& 
 \diagramnode{29} \ar@{--}[rr]\ar[rd]\ar[ru] && 
 \diagramnode{3,10}\ar@{--}[rr]\ar[rd]\ar[ru] && 
 \diagramnode{14} \ar[rd]\\
 & & & & 
 \diagramnode{17}\ar@{--}[rr]\ar[rd]\ar[ru] &&
 \diagramnode{28}\ar@{--}[rr]\ar[rd]\ar[ru] && 
 \diagramnode{39}\ar@{--}[rr]\ar[rd]\ar[ru] && 
 \diagramnode{4,10}\ar@{--}[rr]\ar[rd]\ar[ru] && 
 \diagramnode{15} \ar[rd]\\
 & & &
 \diagramnode{16}\ar@{--}[rr]\ar[rd]\ar[ru] && 
 \diagramnode{27}\ar@{--}[rr]\ar[rd]\ar[ru] && 
 \diagramnode{38}\ar@{--}[rr]\ar[rd]\ar[ru] &&
 \diagramnode{49}\ar@{--}[rr]\ar[rd]\ar[ru] && 
 \diagramnode{5,10}\ar@{--}[rr]\ar[rd]\ar[ru] &&
 \diagramnode{16}\ar[rd] \\
 &&
 \diagramnode{15}\ar@{--}[rr]\ar[rd]\ar[ru] && 
 \diagramnode{26}\ar@{--}[rr]\ar[rd]\ar[ru] && 
 \diagramnode{37}\ar@{--}[rr]\ar[rd]\ar[ru] &&
 \diagramnode{48}\ar@{--}[rr]\ar[rd]\ar[ru] && 
 \diagramnode{59}\ar@{--}[rr]\ar[rd]\ar[ru] &&
 \diagramnode{6,10}\ar@{--}[rr]\ar[rd]\ar[ru]&& 
 \diagramnode{17} \ar[rd]\\
 &
 \diagramnode{14}\ar@{--}[rr]\ar[ru]\ar[rd] && 
 \diagramnode{25}\ar@{--}[rr]\ar[ru]\ar[rd] && 
 \diagramnode{36}\ar@{--}[rr]\ar[ru]\ar[rd] && 
 \diagramnode{47}\ar@{--}[rr]\ar[ru]\ar[rd] && 
 \diagramnode{58} \ar@{--}[rr]\ar[ru]\ar[rd]&&
 \diagramnode{69}\ar@{--}[rr]\ar[ru]\ar[rd] && 
 \diagramnode{7,10}\ar@{--}[rr]\ar[ru]\ar[rd]&&
 \diagramnode{18} \ar[rd]\\
 \diagramnode{13}\ar@{--}[rr]\ar[ru] && 
 \diagramnode{24}\ar@{--}[rr]\ar[ru] && 
 \diagramnode{35}\ar@{--}[rr]\ar[ru] && 
 \diagramnode{46}\ar@{--}[rr]\ar[ru] && 
 \diagramnode{57} \ar@{--}[rr]\ar[ru] &&
 \diagramnode{68}\ar@{--}[rr]\ar[ru] &&
 \diagramnode{79}\ar@{--}[rr]\ar[ru] &&
 \diagramnode{8,10}\ar@{--}[rr]\ar[ru] &&
 \diagramnode{19}
}
\]
The second power of $(\Gamma_{A_7},\tau)$ gives rise to three components: $(\Gamma^2_{A_3},\tau_2)$,
$(\Gamma_{(1,3)},\tau^2)\cong (\Gamma_{(2,4)},\tau^2) \cong (\Z A_4/\Sigma,\tau^2)$.
\[
\small
\xymatrix@-8mm{
 && \diagramnode{18}\ar[rdd]\ar@{--}[rr] && \diagramnode{3,10}\ar[rdd]\ar@{--}[rr] && \diagramnode{25}\ar[rdd]\ar@{--}[rr]
 && \diagramnode{47}\ar[rdd]\ar@{--}[rr] && \diagramnode{69}\ar[rdd]\ar@{--}[rr] && \diagramnode{18}\\
 & & \\
&\diagramnode{16}\ar[rdd]\ar@{--}[rr]\ar[ruu] & & \diagramnode{38}\ar[rdd]\ar[ruu]\ar@{--}[rr] & & \diagramnode{5,10}\ar[rdd]\ar[ruu]\ar@{--}[rr]
 & & \diagramnode{ 27}\ar[rdd]\ar[ruu]\ar@{--}[rr] & & \diagramnode{49}\ar[ruu]\ar@{--}[rr]\ar[rdd] &&\diagramnode{16}\ar[ruu]\\
&&\\
\diagramnode{14}\ar[ruu]\ar@{--}[rr] && \diagramnode{36}\ar[ruu]\ar@{--}[rr] && \diagramnode{58}\ar[ruu]\ar@{--}[rr]
 && \diagramnode{7,10}\ar[ruu]\ar@{--}[rr] && \diagramnode{29}\ar[ruu]\ar@{--}[rr]&&\diagramnode{14}\ar[ruu]}
\]
\[
\small
\xymatrix@-8mm{ & & \\
  & & & \diagramnode{19}\ar[rdd]\ar@{--}[rr] && \diagramnode{13}\ar[rdd]&&\\
 & & \\
  & & \diagramnode{17}\ar[ruu]\ar[rdd]\ar@{--}[rr] && \diagramnode{39}\ar[ruu]\ar@{--}[rr]\ar[rdd]&& \diagramnode{15}\ar[rdd]\\ 
  & & \\
  & \diagramnode{15}\ar[rdd]\ar[ruu]\ar@{--}[rr] & & \diagramnode{37}\ar[rdd]\ar[ruu]\ar@{--}[rr] 
  & & \diagramnode{59} \ar@{--}[rr]\ar[ruu]\ar[rdd]&& \diagramnode{17}\ar[rdd]\\ 
  & & \\
  \diagramnode{13}\ar@{--}[rr]\ar[ruu] & & \diagramnode{35}\ar[ruu]\ar@{--}[rr] & & \diagramnode{57}\ar[ruu]\ar@{--}[rr]
  & & \diagramnode{79} \ar@{--}[rr]\ar[ruu] & & \diagramnode{19}}
\small
\xymatrix@-8mm{ & & \\
  & & & \diagramnode{2,10}\ar[rdd]\ar@{--}[rr] && \diagramnode{24}\ar[rdd]&&\\
 & & \\
  & & \diagramnode{28}\ar[ruu]\ar[rdd]\ar@{--}[rr] && \diagramnode{4,10}\ar[ruu]\ar@{--}[rr]\ar[rdd]&& \diagramnode{26}\ar[rdd]\\ 
  & & \\
  & \diagramnode{26}\ar[rdd]\ar[ruu]\ar@{--}[rr] & & \diagramnode{48}\ar[rdd]\ar[ruu]\ar@{--}[rr] 
  & & \diagramnode{6,10} \ar@{--}[rr]\ar[ruu]\ar[rdd]&& \diagramnode{28}\ar[rdd]\\ 
  & & \\
  \diagramnode{24}\ar@{--}[rr]\ar[ruu] & & \diagramnode{46}\ar[ruu]\ar@{--}[rr] & & \diagramnode{68}\ar[ruu]\ar@{--}[rr]
  & &  \diagramnode{8,10} \ar@{--}[rr]\ar[ruu] & & \diagramnode{2,10}
}
\] 
By Proposition \ref{CC} one deduces that there is no $u\in\N$ such that
the two connected components isomorphic to $\Z A_4/\Sigma$
are AR quivers of a $u$-cluster category of type $A_4$.

\end{ex}

\begin{ex}

Let $m=6$ and $n=2.$ Then the connected components of the $6$th-power of $\Gamma_{A_{11}}$ are 
$\Gamma^6_{A_1}$, and the ones meeting the first three rows of  $\Gamma_{A_{11}}$.
These are  $\Gamma_{(1,3)}\cong\Gamma_{(2,4)}\cong\Gamma_{(1,4)}\cong\Gamma_{(2,5)}\cong\Z A_2/ \tau^{-4}\Sigma^2,$ which
are cylindrically shaped and
$\Gamma_{(1,5)}\cong \Gamma_{(2,6)}\cong\Z A_2/ \tau^{-2}\Sigma$ having the shape of a M\"obius band.

\[
\small
\xymatrix@-8mm{
\diagramnode{18}\ar@{--}[rrrr] &&& & \diagramnode{7,14}\ar@{--}[rrrr] &&& & \diagramnode{6,13}\ar@{--}[rrrr] &&
 & & \diagramnode{5,12}\ar@{--}[rrrr] && & & \diagramnode{4,11}\ar@{--}[rrrr] &&&&\diagramnode{3,10}
\ar@{--}[rrrr] && &&\diagramnode{29}\ar@{--}[rrrr] &&&&\diagramnode{18}
}
\]

\[
\small
\xymatrix@-8mm{
&\diagramnode{19}\ar[rdd]\ar@{--}[rr]& & \diagramnode{17}\ar[rdd]\ar@{--}[rr] & & \diagramnode{7,13}\ar[rdd]\ar@{--}[rr]
 & & \diagramnode{5,13}\ar[rdd]\ar@{--}[rr] & & \diagramnode{5,11}\ar@{--}[rr]\ar[rdd] &&\diagramnode{3,11}
\ar@{--}[rr]\ar[rdd] &&\diagramnode{39}\ar@{--}[rr]\ar[rdd] &&\diagramnode{19}\\
&&\\
\diagramnode{13}\ar[ruu]\ar@{--}[rr] && \diagramnode{79}\ar[ruu]\ar@{--}[rr] && \diagramnode{1,13}\ar[ruu]\ar@{--}[rr]
 && \diagramnode{57}\ar[ruu]\ar@{--}[rr] && \diagramnode{11,13}\ar[ruu]\ar@{--}[rr]&&\diagramnode{35}
\ar[ruu]\ar@{--}[rr]&&\diagramnode{9,11}
\ar[ruu]\ar@{--}[rr]&&\diagramnode{13}
\ar[ruu]
}
\]

\[
\small
\xymatrix@-8mm{
&\diagramnode{2,10}\ar[rdd]\ar@{--}[rr]& & \diagramnode{28}\ar[rdd]\ar@{--}[rr] & & \diagramnode{8,14}\ar[rdd]\ar@{--}[rr]
 & & \diagramnode{6,14}\ar[rdd]\ar@{--}[rr] & & \diagramnode{6,12}\ar@{--}[rr]\ar[rdd] &&\diagramnode{4,12}
\ar@{--}[rr]\ar[rdd] &&\diagramnode{4,10}\ar@{--}[rr]\ar[rdd] &&\diagramnode{2,10}\\
&&\\
\diagramnode{24}\ar[ruu]\ar@{--}[rr] && \diagramnode{8,10}\ar[ruu]\ar@{--}[rr] && \diagramnode{2,14}\ar[ruu]\ar@{--}[rr]
 && \diagramnode{68}\ar[ruu]\ar@{--}[rr] && \diagramnode{12,14}\ar[ruu]\ar@{--}[rr]&&\diagramnode{46}
\ar[ruu]\ar@{--}[rr]&&\diagramnode{10,12}
\ar[ruu]\ar@{--}[rr]&&\diagramnode{24}
\ar[ruu]
}
\]

\[
\small
\xymatrix@-8mm{
&\diagramnode{1,10}\ar[rdd]\ar@{--}[rr]& & \diagramnode{27}\ar[rdd]\ar@{--}[rr] & & \diagramnode{8,13}\ar[rdd]\ar@{--}[rr]
 & & \diagramnode{5,14}\ar[rdd]\ar@{--}[rr] & & \diagramnode{6,11}\ar@{--}[rr]\ar[rdd] &&\diagramnode{3,12}
\ar@{--}[rr]\ar[rdd] &&\diagramnode{49}\ar@{--}[rr]\ar[rdd] &&\diagramnode{1,10}\\
&&\\
\diagramnode{14}\ar[ruu]\ar@{--}[rr] && \diagramnode{7,10}\ar[ruu]\ar@{--}[rr] && \diagramnode{2,13}\ar[ruu]\ar@{--}[rr]
 && \diagramnode{58}\ar[ruu]\ar@{--}[rr] && \diagramnode{11,14}\ar[ruu]\ar@{--}[rr]&&\diagramnode{36}
\ar[ruu]\ar@{--}[rr]&&\diagramnode{9,12}
\ar[ruu]\ar@{--}[rr]&&\diagramnode{14}
\ar[ruu]
}
\]

\[
\small
\xymatrix@-8mm{
&\diagramnode{2,11}\ar[rdd]\ar@{--}[rr]& & \diagramnode{38}\ar[rdd]\ar@{--}[rr] & & \diagramnode{9,14}\ar[rdd]\ar@{--}[rr]
 & & \diagramnode{16}\ar[rdd]\ar@{--}[rr] & & \diagramnode{7,12}\ar@{--}[rr]\ar[rdd] &&\diagramnode{4,13}
\ar@{--}[rr]\ar[rdd] &&\diagramnode{5,10}\ar@{--}[rr]\ar[rdd] &&\diagramnode{2,11}\\
&&\\
\diagramnode{25}\ar[ruu]\ar@{--}[rr] && \diagramnode{8,11}\ar[ruu]\ar@{--}[rr] && \diagramnode{3,14}\ar[ruu]\ar@{--}[rr]
 && \diagramnode{6,9}\ar[ruu]\ar@{--}[rr] && \diagramnode{1,12}\ar[ruu]\ar@{--}[rr]&&\diagramnode{47}
\ar[ruu]\ar@{--}[rr]&&\diagramnode{10,13}
\ar[ruu]\ar@{--}[rr]&&\diagramnode{25}
\ar[ruu]
}
\]

\[
\small
\xymatrix@-8mm{ 
  & \diagramnode{1,11}\ar[rdd]\ar@{--}[rr] & & \diagramnode{37}\ar[rdd]\ar@{--}[rr] 
  & & \diagramnode{9,13} \ar@{--}[rr]\ar[rdd]&& \diagramnode{15}\ar[rdd]\\ 
  & & \\
  \diagramnode{15}\ar@{--}[rr]\ar[ruu] & & \diagramnode{7,11}\ar[ruu]\ar@{--}[rr] & & \diagramnode{3,13}\ar[ruu]\ar@{--}[rr]
  & &  \diagramnode{59} \ar@{--}[rr]\ar[ruu] & & \diagramnode{1,11}
}
\]

\[
\small
\xymatrix@-8mm{ 
  & \diagramnode{2,12}\ar[rdd]\ar@{--}[rr] & & \diagramnode{48}\ar[rdd]\ar@{--}[rr] 
  & & \diagramnode{10,14} \ar@{--}[rr]\ar[rdd]&& \diagramnode{26}\ar[rdd]\\ 
  & & \\
  \diagramnode{26}\ar@{--}[rr]\ar[ruu] & & \diagramnode{8,12}\ar[ruu]\ar@{--}[rr] & & \diagramnode{4,14}\ar[ruu]\ar@{--}[rr]
  & &  \diagramnode{6,10} \ar@{--}[rr]\ar[ruu] & & \diagramnode{2,12}
}
\]

Furthermore, we deduce from Proposition \ref{CC} that $\Gamma_{(1,3)}$, $\Gamma_{(2,4)}$ and 
$\Gamma_{(1,4)}$, $\Gamma_{(2,5)}$ are isomorphic to the AR quiver of a $4$-cluster category of type $A_2$.
\end{ex}

\newpage

\end{document}